\definecolor{red}{rgb}{1,0,.2}
\newcommand*{\arabicdec}[1]{\the\numexpr\value{#1}\relax}
\theoremstyle{plain}
\newtheorem{theorem}{Theorem}[section]
\newtheorem{definition}[theorem]{Definition}
\newtheorem{lemma}[theorem]{Lemma}
\newtheorem{proposition}[theorem]{Proposition}
\numberwithin{equation}{section}
\newcommand{\proj}{\mathrm{proj}}
\newcommand{\iiv}{\overline{\imath}}
\newcommand{\jjv}{\overline{\jmath}}
\newcommand{\supp}{\mathrm{supp}}
\newcommand{\floor}[1]{\left\lfloor #1 \right\rfloor}
\DeclareMathOperator*{\essinf}{ess\ inf}
\begin{document}

\title{On the dimension theory of Okamoto's function}

\author{R. D\'{a}niel Prokaj}
\address{ R. D\'{a}niel Prokaj, Department of Mathematics, University of North Texas, 1155 Union Circle, Denton, TX 76203-5017, USA}
\email{rudolf.prokaj@unt.edu}

\author{Bal\'{a}zs B\'{a}r\'{a}ny}
\address{Bal\'{a}zs B\'{a}r\'{a}ny, Department of Stochastics, Institute of Mathematics, Budapest University of Technology and Economics, M\H{u}egyetem rkp. 3., Budapest, Hungary, H-1111}
\email{balubs@math.bme.hu}

\thanks{
  2020 Mathematics subject classification: 28A80, 37C45. \\
  \indent Keywords: Fractals, Self-affine sets, Hausdorff dimension, Assouad dimension.\\
  B. B\'ar\'any and R.D. Prokaj were supported by the grants NKFI K142169, and the grant NKFI KKP144059 “Fractal geometry and applications”. 
}

\begin{abstract}
    In this paper, we investigate the dimension theory of the one parameter family of Okamoto's function. We compute the Hausdorff, box-counting and Assouad dimensions of the graph for a typical choice of parameter. Furthermore, we study the dimension of the level sets. We give an upper bound on the dimension of every level set, and we show that for a typical choice of parameter this value is attained for Lebesgue almost every level set.
\end{abstract}

\maketitle
\thispagestyle{empty}

\section{Introduction}\label{od32}

Okamoto \cite{okamoto2005remark} introduced and studied a one-parameter family of nowhere differentiable functions $T_a\colon[0,1]\to[0,1]$ for $a\in(0,1)$. A notable property of Okamoto's functions is that the graph is a self-affine set. That is, let $a\in(0,1)$ be arbitrary and consider the following planar iterated function system (IFS) $\mathcal{F}=\mathcal{F}_a=\{f_1,f_2,f_3\}$ on $[0,1]^2$, where
\begin{align*}
  f_1(x,y)&=\left(\frac{x}{3},ay\right),\\
  f_2(x,y)&=\left(\frac{x+1}{3},(1-2a)y+a\right),\\
  f_3(x,y)&=\left(\frac{x+2}{3},ay+1-a\right).
\end{align*}
We will often refer to $\mathcal{F}$ as the Okamoto IFS. By Hutchinson's theorem \cite{hutchinson1981fractals}, there exists a unique non-empty compact set $\mathcal{O}_a\subset[0,1]^2$ satisfying
\[
  \mathcal{O}_a=\bigcup_{i\in\{1,2,3\}}f_i(\mathcal{O}_a).
\]
The set $\mathcal{O}_a$ is the attractor of $\mathcal{F}$. It is easy to see that $\mathcal{O}_a$ defines a function as follows: for every $x\in[0,1]$, let $T_a(x)$ be the unique $y\in[0,1]$ such that $(x,y)\in\mathcal{O}_a$. One may also obtain $T_a$ and its graph $\mathcal{O}_a$ as defined by Okamoto \cite{okamoto2005remark}.

\begin{figure}
  \centering
  \begin{subfigure}{.5\textwidth}
    \centering
    \includegraphics[width=.9\linewidth]{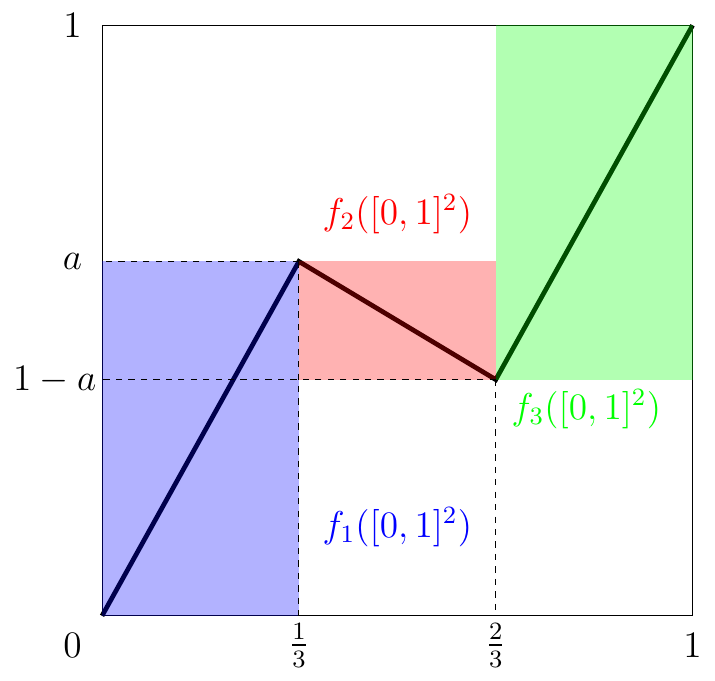}
    \caption{The Okamoto IFS}
    \label{od38}
  \end{subfigure}%
  \begin{subfigure}{.5\textwidth}
    \centering
    \includegraphics[width=.85\linewidth]{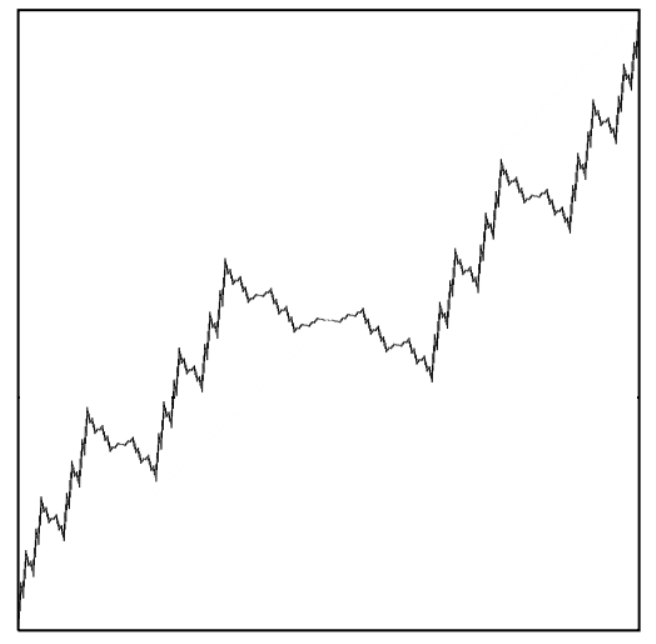}
    \caption{Okamoto's function}
    \label{od37}
  \end{subfigure}
  \caption{This figure illustrates how we obtain \\ Okamoto's function as the attractor of the IFS $\mathcal{F}$.}\label{od36}
\end{figure}

The special cases of $T_{2/3}$ and $T_{5/6}$ were studied by Perkins \cite{perkins1927elementary} and Bourbaki \cite{bourbaki2004elements} respectively, as graphs of nowhere differentiable functions.
It was Okamoto, who first studied the parameter dependence of certain properties of $T_a$.
Similarly to Perkins and Bourbaki, Okamoto also focused on the differentiability of the related functions. He showed that if $a\in(\frac{2}{3},1)$, then $T_a$ is nowhere differentiable, but if $a\in(\frac{1}{2},\frac{2}{3})$, $T_a$ is differentiable at infinitely many points.

Given the structure of Okamoto's function, one expects a strong relation between the derivative at some $x\in(0,1)$ and the ternary expansion of $x$. Assuming some technical conditions on $a\in(1/2,1)$ {and on $x\in(0,1)$}, Allaart \cite{allaart2016infinite} proved that the derivative of $T_a$ is $+\infty$ (resp. $-\infty$) at $x$ if and only if the number of $1$s in the ternary expansion of $x$ is finite and even (resp. odd). Building on his work, Dalaklis et al. \cite{dalaklis2023partial} studied the partial derivatives of Okamoto's function with respect to its defining parameter $a$ around $a=1/3$. They also found a connection between the partial derivative at $x$ and the $1$s in the ternary expansion of $x$.

Despite all the attention Okamoto's functions received over the years, not too many results are known about the fractal dimensions of $\mathcal{O}_a$. We define the dimensions that are of our main interest in Section \ref{od34}. These include the Hausdorff, box and Assouad dimensions, noted as $\dim_{\rm H}, \dim_{\rm B}$ and $\dim_{\rm A}$ respectively.

In Example~11.4 of Falconer's book \cite{whichfalconerbook}, the box dimension of the graph of general self-affine functions and in particular, the box dimension of $\mathcal{O}_a$, was calculated. The closed formula for the box-counting dimension of $\mathcal{O}_a$ was published by McCollum \cite{mccollum2010further}, who also claimed that the Hausdorff and box-counting dimension of the graph are equal. However, as Allaart pointed it out in \cite{allaart2016infinite}, his argument was incorrect.

\subsection{New results}\label{od25}

We managed to show that for typical parameters, the Hausdorff, box and Assouad dimensions of $\mathcal{O}_a$ are equal. 

\begin{theorem}[Main Theorem 1]\label{od70}
  Let $s_0=1+\frac{\log(4a-1)}{\log 3}$.
  There exists a set $\mathcal{E}\subset\left(\frac{1}{2},1\right)$ with $\dim_{\rm H}\mathcal{E}=0$ such that for all $a\in \left(\frac{1}{2},1\right)\setminus\mathcal{E}$ we have
  \[
    \dim_{\rm H}\mathcal{O}_a = \dim_{\rm B}\mathcal{O}_a = \dim_{\rm A} \mathcal{O}_a= s_0,
  \]
  where $\mathcal{O}_a$ is the graph of Okamoto's function defined with parameter $a$.
\end{theorem}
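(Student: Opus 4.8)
The plan is to prove this through three ingredients: (1) the box-counting dimension of $\mathcal{O}_a$ equals $s_0$ for every $a\in(1/2,1)$, which is already essentially known from Falconer's and McCollum's work; (2) for every $a$, $\dim_{\rm A}\mathcal{O}_a \le s_0$, hence all four dimensions are squeezed between $\dim_{\rm H}$ and $\dim_{\rm B}$; and (3) the genuinely new and hard part, $\dim_{\rm H}\mathcal{O}_a \ge s_0$ for all $a$ outside an exceptional set $\mathcal{E}$ of Hausdorff dimension zero.

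For step (1), I would recall that $\mathcal{O}_a$ is the graph of a self-affine function generated by three maps with horizontal contraction $1/3$ and vertical contractions $\{a, 1-2a, a\}$ (whose absolute values sum to $4a-1$ when $a\in(1/2,1)$, noting $1-2a<0$). Covering by $3^n$-mesh squares and using the standard column-counting argument (as in Falconer Example 11.4) gives $\overline{\dim}_{\rm B}\mathcal{O}_a = \underline{\dim}_{\rm B}\mathcal{O}_a = 1+\log(4a-1)/\log 3 = s_0$. For step (2), the Assouad dimension upper bound: since the vertical fibres of $\mathcal{O}_a$ (the level sets) have a self-similar-like structure, I would bound $\dim_{\rm A}\mathcal{O}_a \le \dim_{\rm A}(\text{fibre}) + 1 \le s_0$, or more directly establish that the weak tangents of $\mathcal{O}_a$ are contained in products of the form $C\times[0,1]$ with $\dim_{\rm A} C \le s_0 - 1$; a self-affine graph has the feature that zooming in never produces anything worse than the global box-counting behaviour times a vertical interval, so $\dim_{\rm A}\mathcal{O}_a = \overline{\dim}_{\rm B}\mathcal{O}_a = s_0$. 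Combined with $\dim_{\rm H} \le \dim_{\rm B} = s_0 \le \dim_{\rm A}$ always, everything collapses to $s_0$ precisely when $\dim_{\rm H}\mathcal{O}_a \ge s_0$.

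The main obstacle, and the reason for the exceptional set, is the lower bound $\dim_{\rm H}\mathcal{O}_a\ge s_0$. The natural approach is to push a measure down onto $\mathcal{O}_a$ — the pushforward of the uniform $(1/3,1/3,1/3)$ Bernoulli measure on $\{1,2,3\}^{\mathbb N}$ — and estimate its local dimension via a Ledrappier–Young type formula: the dimension decomposes as the entropy contribution $\log 3/\log 3 = 1$ in the horizontal direction plus the Lyapunov-dimension contribution $\log(4a-1)/\log 3$ from the vertical direction, provided there is no "dimension drop", i.e. provided the projected measure in the fibre direction has the full dimension predicted by the transversality/subadditive pressure heuristic. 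The obstruction to this is exact overlaps or atypical resonances in the vertical dynamics: the vertical parts of the maps are $x\mapsto ax$, $x\mapsto (1-2a)x+a$, $x\mapsto ax+1-a$, and for special algebraic values of $a$ the resulting self-affine fibre measure can have dimension strictly smaller than $\min\{1, \log(4a-1)/\log a\}$ — ehis is exactly where $\mathcal{E}$ comes from. I would invoke the machinery of Hochman-type inverse theorems (or Bárány–Hochman–Rapaport for the self-affine setting) to conclude that the set of $a$ for which the fibre measure drops dimension has Hausdorff dimension zero, since such a drop forces super-exponential concentration of the relevant cylinder configurations, an algebraic condition carving out a dimension-zero set of parameters. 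Assembling: for $a\notin\mathcal{E}$ the pushforward measure has local dimension $s_0$ almost everywhere, so $\dim_{\rm H}\mathcal{O}_a\ge s_0$, completing the chain of equalities.

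Whether the parameter-exceptional set can be taken of dimension zero (rather than merely measure zero) is the delicate quantitative point; I expect this to rest on a careful Diophantine/entropy estimate showing the "bad" parameters are those where a one-parameter family of self-similar/self-affine IFSs on the line has a super-exponentially small separation between distinct cylinders, a phenomenon confined to a set of parameters of dimension zero by the dimension-conservation arguments in the cited literature on $\dim_{\rm H}\mathcal{E}=0$ exceptional sets.
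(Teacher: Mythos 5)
Your overall architecture (upper bound via affinity/box dimension, lower bound via a Ledrappier--Young decomposition, Assouad via slices) does align with the paper, but there are two concrete gaps that would sink the argument as written.

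\textbf{Wrong measure for the lower bound.} You propose pushing forward the uniform $(1/3,1/3,1/3)$ Bernoulli measure. This does \emph{not} attain the affinity dimension. The paper instead uses the natural (K\"aenm\"aki-type) Bernoulli measure with probabilities $p_1=a(1/3)^{s_0-1}$, $p_2=(2a-1)(1/3)^{s_0-1}$, $p_3=a(1/3)^{s_0-1}$, chosen precisely so that, once the vertical projection has dimension $1$, the Feng--Hu formula collapses to $\dim_{\rm H}\mu=s_0$. With the uniform measure one gets $\dim_{\rm H}\mu=2+\tfrac{(2/3)\log a+(1/3)\log(2a-1)}{\log 3}$, which is strictly less than $s_0$ (check $a=2/3$: $a^{2/3}(2a-1)^{1/3}<(4a-1)/3$). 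So your measure gives a strict inequality, not the sharp lower bound.

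\textbf{Assouad dimension is not free.} Your step (2) asserts, essentially as a heuristic, that for self-affine graphs $\dim_{\rm A}\mathcal{O}_a=\overline{\dim}_{\rm B}\mathcal{O}_a$ automatically, for every $a$. This is false in general for self-affine carpets/graphs (the Assouad dimension can strictly exceed the box dimension when fibres vary). The paper's Assouad upper bound is obtained via the Antilla--B\'ar\'any--K\"aenm\"aki slice bound (Theorem~\ref{od18}), $\dim_{\rm A}\mathcal{O}_a\le\max\{\dim_{\rm H}\mathcal{O}_a,\,1+\sup_y\dim_{\rm H}L_y\}$, together with a genuinely nontrivial bound $\dim_{\rm H}L_y\le s_0-1$ for \emph{every} $y$ (Lemma~\ref{od19}). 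That uniform level-set bound requires the lower local dimension of the vertical projection to be $\ge1$ at \emph{every} point, which the paper gets from Shmerkin's $L^q$-dimension theorem (Theorem~\ref{od07b}) plus Lemma~\ref{od01}; Hochman's dimension formula alone only gives almost-everywhere information. Consequently, in the paper the Assouad bound also holds only for $a\notin\mathcal{E}$, not for all $a$ as you claim.

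Finally, on the exceptional set: your intuition that $\mathcal{E}$ captures parameters with super-exponentially small separation is directionally right, but the machinery is quite specific. The paper proves that the one-parameter family of vertical projections satisfies the \emph{strong} exponential separation condition off a dimension-zero set (Theorem~\ref{od97}) via Hochman's analytic-family criterion, and the crux is the non-degeneracy Lemma~\ref{od95}, which shows the relevant separation functions $F^k_{\iiv,\jjv}$ are not identically zero by inspecting low-order Taylor coefficients. This is an Okamoto-specific structural fact, not a generic inverse-theorem consequence; your writeup should isolate that step rather than appeal to B\'ar\'any--Hochman--Rapaport type results, which are built for a different (strongly irreducible) self-affine setting that does not apply to this diagonal system.
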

To prove this result, we had to verify first that the projection of the Okamoto IFS to the ${\sf y}$-axis satisfies the strong exponential separation condition. 
However, as $\mathcal{O}_a$ is the graph of a continuous function, the projected IFS is degenerate in the sense that there are strictly different symbolic codings for which the natural projection is the same for every choice of parameters. Hence, proving exponential separation for typical parameters is a non-trivial exercise.

For $y\in(0,1)$ and $a\in(1/2,1)$, we define the corresponding level set of Okamoto's function defined with parameter $a$ as
\[
  L_y=\{x\in\mathbb{R}: (x,y)\in\mathcal{O}_a\}.
\]

In a rather recent paper, Baker and Bender \cite{baker2023cardinality} investigated the cardinality and Hausdorff dimension of level sets of Okamoto's functions. They showed that {for every $a\in(0.6077,1)\setminus \frac{\sqrt{5}-1}{2}$ and for every transcendental $a\in(0.5595,0.6077)$} if $L_y$ has continuum many points for some $y\in(0,1)$, then $\dim_{\rm H}L_y>0$. Further, if $a\in(0.5, 0.50049..)$, then one can always find a $y\in(0,1)$ such that $L_y$ only has $3$ elements, and hence the assumption on having continuum many points in a level set for positive Hausdorff dimension is clearly necessary.

Moving forward in the direction of studying the level sets, we show that for a typical parameter $a\in(1/2,1)$ every level set has Hausdorff, box-counting and Assouad dimensions at most $s_0-1$. Moreover, we also show that for a typical parameter $a\in(1/2,1)$ and Lebesgue-almost every $y\in(0,1)$, the corresponding level set's Hausdorff dimension is not just positive, but it is exactly equal to $s_0-1$.

\begin{theorem}[Main Theorem 2]\label{od68}
  Let $s_0=1+\frac{\log(4a-1)}{\log 3}$.
  There exists a set $\mathcal{E}\subset\left(\frac{1}{2},1\right)$ with $\dim_{\rm H}\mathcal{E}=0$ such that for all $a\in \left(\frac{1}{2},1\right)\setminus\mathcal{E}$ we have
  \[
    \forall y\in[0,1]: \dim_{\rm H}L_y\leq\overline{\dim}_{\rm B}L_y\leq\dim_{\rm A}L_y\leq s_0-1.
  \]
\end{theorem}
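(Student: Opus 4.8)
We outline the strategy we would follow to prove Theorem~\ref{od68}. The inequalities $\dim_{\rm H}L_y\le\overline{\dim}_{\rm B}L_y\le\dim_{\rm A}L_y$ hold for every bounded set, so it suffices to show $\dim_{\rm A}L_y\le s_0-1$ uniformly in $y\in[0,1]$. Write $g_1(t)=at$, $g_2(t)=(1-2a)t+a$, $g_3(t)=at+1-a$ for the second coordinates of $f_1,f_2,f_3$ — the IFS obtained by projecting $\mathcal F$ to the $y$-axis — and for $\mathbf i=i_1\cdots i_n\in\{1,2,3\}^n$ put $J_{\mathbf i}=g_{i_1}\circ\cdots\circ g_{i_n}([0,1])$ and let $[\mathbf i]$ be the corresponding level-$n$ triadic interval of the $x$-axis. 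Since $\mathcal O_a$ is the graph of $T_a$, the portion of $\mathcal O_a$ over $[\mathbf i]$ is $f_{i_1}\circ\cdots\circ f_{i_n}(\mathcal O_a)$, which projects vertically onto $J_{\mathbf i}$ and is an affine copy of $\mathcal O_a$ contracted by $3^{-n}$ horizontally; hence $L_y\cap[\mathbf i]=\emptyset$ unless $y\in J_{\mathbf i}$, in which case $L_y\cap[\mathbf i]$ is a translate of $3^{-n}L_{y'}$ with $y'=(g_{i_1}\circ\cdots\circ g_{i_n})^{-1}(y)\in[0,1]$. By the standard reduction to triadic scales, $N(L_y\cap B(x,R),r)$ is therefore controlled by $\sup_{z}N(L_z,3^{-n})$ with $3^{-n}\approx r/R$, and the whole theorem reduces to the single uniform covering estimate
\[
  \sup_{y\in[0,1]}N(L_y,3^{-n})\le C_\varepsilon\,(4a-1)^n\,3^{\varepsilon n}\qquad(n\ge1)
\]
for every $\varepsilon>0$, the constant being independent of $y$ and $n$; here $4a-1=3^{s_0-1}$.

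Set $\phi(y,n)=\#\{\mathbf i\in\{1,2,3\}^n:y\in J_{\mathbf i}\}$. Because $y\in J_{\mathbf i}$ holds exactly when the graph of $T_a$ over $[\mathbf i]$ attains the value $y$, covering $L_y$ by the triadic mesh of size $3^{-n}$ gives $N(L_y,3^{-n})\le\phi(y,n)$, so it is enough to bound $\sup_y\phi(y,n)$. On average the required bound is automatic and sharp: by Fubini,
\[
  \int_0^1\phi(y,n)\,dy=\sum_{\mathbf i\in\{1,2,3\}^n}|J_{\mathbf i}|=\sum_{m=0}^n\binom nm 2^{n-m}a^{n-m}(2a-1)^m=(4a-1)^n,
\]
so the mean of $\phi(\cdot,n)$ over $[0,1]$ is precisely the target rate $(4a-1)^n=3^{(s_0-1)n}$. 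The substance of the theorem is the passage from this mean to the supremum, and this is exactly the point at which the exceptional set $\mathcal E$ must be removed: for a resonant choice of $a$ there can be a single level $y$ whose backward itinerary makes $\phi(y,n)$ grow at an exponential rate strictly larger than the average, even though $s_0-1$ is attained for Lebesgue-almost every $y$ (which already follows from the above identity by a Borel--Cantelli argument).

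For $a$ outside the exceptional set of Section~\ref{od27} the strong exponential separation condition supplies $c=c(a)>0$ such that distinct generation-$n$ cylinder maps of $\{g_1,g_2,g_3\}$ are at mutual distance $\ge c^n$; in particular there are no exact overlaps. I would use this to upgrade the mean bound to the uniform bound on $\phi$ by a stopping-time and recursion argument: decompose the words $\mathbf i$ of length $n$ with $y\in J_{\mathbf i}$ according to comparable values of $|J_{\mathbf i}|$; cylinders of a common size that all contain $y$ are pairwise overlapping intervals, so their defining maps cluster within that size, and $c^n$-separation then forces only boundedly many of them (in fact $\lesssim1$ below the scale $c^n$) to occur in each size window; the few long cylinders, where this local packing estimate is weak, are handled by cutting $[\mathbf i]$ at the first generation where the rescaled copy of $L_{y'}$ it carries has size $\approx3^{-n}$ and iterating the estimate through the resulting generations. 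The conceptual content is that $\tfrac1n\log\phi(y,n)$ converges to the exponential rate at which the multivalued backward orbit $\{(g_{i_1}\circ\cdots\circ g_{i_k})^{-1}(y)\}$ visits the overlap region $[1-a,a]$ — where the count triples, and where it is otherwise frozen — and separation guarantees that this rate does not exceed, \emph{uniformly in $y$}, the value $s_0-1$ realised for Lebesgue-typical $y$ by the averaged identity. Equivalently, one wants a positive bounded sub-eigenfunction $w$ on $[0,1]$ for the transfer operator $\mathcal L f(z)=\sum_{i:\,z\in g_i([0,1])}f(g_i^{-1}(z))$ with $\mathcal L w\le(4a-1)w$ (note $\mathcal L^n\mathbbm 1=\phi(\cdot,n)$ and $\mathcal L^\ast$ fixes Lebesgue measure up to the factor $4a-1$), since then $\phi(y,n)\le(\sup w/\inf w)(4a-1)^n$. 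The main obstacle is precisely the construction of such $w$, that is, ruling out a resonant level $y$ that is over-covered at an exponential rate; this is what is only possible after discarding $\mathcal E$, and I expect it to be the technical heart of the argument, while the reductions in the first two paragraphs and the averaged identity are routine.

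Granting the uniform estimate $\sup_y\phi(y,n)\le C_\varepsilon(4a-1)^n3^{\varepsilon n}$ for all $\varepsilon>0$, the self-affine reduction of the first paragraph yields $N(L_y\cap B(x,R),r)\lesssim_\varepsilon(R/r)^{s_0-1+\varepsilon}$ for all $y\in[0,1]$, all $0<r<R$ and all $\varepsilon>0$, hence $\dim_{\rm A}L_y\le s_0-1$ for every $y$ whenever $a$ lies outside the (Hausdorff-dimension-zero) exceptional set of Section~\ref{od27}. Taking $\mathcal E$ to be that set completes the proof.
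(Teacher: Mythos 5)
There is a genuine gap at the crucial step, and the student's proposed method to fill it does not match the paper's. Your reductions in the first two paragraphs are sound: the inequalities $\dim_{\rm H}\le\overline{\dim}_{\rm B}\le\dim_{\rm A}$ are automatic, $N(L_y,3^{-n})\le\phi(y,n)$ where $\phi(y,n)=\#\{\mathbf i\in\{1,2,3\}^n:y\in J_{\mathbf i}\}$, and the Fubini identity $\int_0^1\phi(y,n)\,dy=(4a-1)^n$ is correct. You also correctly identify that passing from this average to a \emph{uniform in $y$} exponential bound is the heart of the matter. But that step is exactly what is missing from your argument. The sketched stopping-time/packing argument does not obviously work: SESC gives $\mathrm{dist}(g_{\mathbf i},g_{\mathbf j})\ge c^n$ for distinct generation-$n$ words, but the cylinders containing $y$ have diameters $\sim\lambda_{\mathbf i}$ which are typically much larger than $c^n$ (since $c$ may be far below $2a-1$), so $c^n$-separation alone places no useful bound on the number of such cylinders at a comparable size. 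The alternative transfer-operator heuristic (a bounded positive $w$ with $\mathcal L w\le(4a-1)w$) is suggestive but nowhere established; in general the spectral radius of such $\mathcal L$ on $C[0,1]$ need not equal $4a-1$.

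The paper closes this gap by a genuinely different route that you do not invoke. It works with the natural self-similar measure $\nu_y=(\proj_y)_*\mu$ determined by $p_i=\lambda_i 3^{-(s_0-1)}$, and uses Shmerkin's $L^q$-dimension theorem (Theorem~\ref{od07b}) together with the SESC of Theorem~\ref{od97} to show $D(\nu_y,q)=1$ for all $q>1$ (Lemma~\ref{od23}). Lemma~\ref{od01} then converts the $L^q$-dimension statement into a pointwise bound $\nu_y(B(x,r))\le r^{1-\varepsilon}$ valid for \emph{every} $x$ — this is precisely the ``upgrade from mean to supremum'' you flag as the main obstacle, and it is a deep theorem about overlapping self-similar measures rather than a combinatorial lemma one derives directly from SESC. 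Lemma~\ref{od19} then converts $\underline{\dim}_{\rm loc}\nu_y(y)\ge1$ at a given $y$ into $\dim_{\rm H}L_y\le s_0-1$ for that $y$, via a Moran-cover computation which plays the role of (a weighted version of) your counting of $\phi(y,n)$. Finally, the paper obtains the Assouad bound not by re-running covering estimates at all scales as in your first paragraph, but through Lemma~\ref{od05}: a weak-tangent argument showing $\dim_{\rm A}L_y\le\sup_{y'}\dim_{\rm H}L_{y'}$, which is cleaner and avoids the delicate self-affine rescaling bookkeeping you sketch. So: your framing of the difficulty is accurate, but your proposal leaves it unresolved, and the paper resolves it with the $L^q$-dimension machinery rather than the combinatorial or sub-eigenfunction arguments you suggest.
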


\begin{theorem}[Main Theorem 3]\label{od60}
  Let $s_0=1+\frac{\log(4a-1)}{\log 3}$.
  There exists a set $\mathcal{E}\subset\left(\frac{1}{2},1\right)$ with $\dim_{\rm H}\mathcal{E}=0$ such that for all $a\in \left(\frac{1}{2},1\right)\setminus\mathcal{E}$ we have
  \[
    \dim_{\rm H}L_y = s_0-1, \mbox{ for $\mathcal{L}^1$-almost every } y\in[0,1],
  \]
  where $\mathcal{L}^1$ denotes the one-dimensional Lebesgue measure.
\end{theorem}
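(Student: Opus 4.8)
The plan is to combine the uniform upper bound from Theorem~\ref{od68} with a matching lower bound valid for Lebesgue-a.e.\ level set. Since Theorem~\ref{od68} already gives $\dim_{\rm H}L_y\le s_0-1$ for every $y$ outside an exceptional parameter set $\mathcal{E}_2$ of zero Hausdorff dimension, it suffices to produce a (possibly different) exceptional set $\mathcal{E}_3$ with $\dim_{\rm H}\mathcal{E}_3=0$ so that for $a\notin\mathcal{E}_3$ we have $\dim_{\rm H}L_y\ge s_0-1$ for $\mathcal{L}^1$-a.e.\ $y$; the theorem then holds with $\mathcal{E}=\mathcal{E}_2\cup\mathcal{E}_3$, which still has zero Hausdorff dimension.

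For the lower bound I would use a potential-theoretic / transversality argument together with a Marstrand-type slicing principle. The key point is that $\mathcal{O}_a$ is the attractor of the self-affine Okamoto IFS, whose horizontal contraction ratio is $1/3$ in every map and whose vertical ratios are $a,1-2a,a$. The natural self-affine measure $\mu_a=\sum \tfrac13 \cdot (f_i)_*\mu_a$ (Bernoulli measure with weights $1/3$) projects onto Lebesgue measure on the $x$-axis, and pushing forward under $T_a$ gives a measure $\nu_a$ on the $y$-axis. First I would show that for a typical $a$ the measure $\mu_a$ has exact dimension $s_0$: this follows from the Ledrappier–Young / Bárány–Hochman–Rapaport theory once we know the projection to the $y$-axis satisfies the strong exponential separation condition — which, as the excerpt states, is exactly what is verified in Section~\ref{od27} and used for Theorem~\ref{od70}. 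Indeed $\dim_{\rm H}\mu_a = s_0$ combines the Lyapunov exponents $\chi_1=\log 3$ (horizontal) and $\chi_2 = -\sum\tfrac13\log(\text{vertical ratio})$ with the entropy $\log 3$, and the Furstenberg-type dimension of the projected measure being full (value $1$) is what pushes the dimension up to $1+\frac{\log(4a-1)}{\log 3}$.

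Granting $\dim_{\rm H}\mu_a=s_0$ and that the projection $\pi_x{}_*\mu_a=\mathcal{L}^1|_{[0,1]}$, I would disintegrate $\mu_a$ over the $y$-coordinate: write $\mu_a=\int \mu_a^y \, d\nu_a(y)$ where $\mu_a^y$ is supported on $L_y\times\{y\}$. A conditional-measure (Ledrappier–Young type) statement gives that for $\nu_a$-a.e.\ $y$ the slice measure $\mu_a^y$ has Hausdorff dimension $s_0 - 1$, hence $\dim_{\rm H}L_y\ge s_0-1$ for $\nu_a$-a.e.\ $y$. To upgrade ``$\nu_a$-a.e.'' to ``$\mathcal{L}^1$-a.e.'', I would show $\nu_a\ll\mathcal{L}^1$ for typical $a$: since $s_0>1$ exactly when $4a-1>3$... wait — note $s_0-1 = \frac{\log(4a-1)}{\log 3}$ and for $a\in(1/2,1)$ this lies in $(0,1)$, so the graph has dimension in $(1,2)$; the absolute continuity of $\nu_a$ would instead follow from the fact that $\nu_a$ is the pushforward of Lebesgue under $T_a$ and, for typical $a$, has dimension $1$ with an additional Fourier-decay or density argument — alternatively one sidesteps this by working directly with the self-affine measure's slices in the $y$-direction using the exact-dimensionality of $\mu_a$ and Fubini, obtaining the conclusion for Lebesgue-a.e.\ $y$ directly without passing through $\nu_a$.

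The main obstacle I anticipate is precisely this last point: transferring the almost-everywhere statement from the intrinsic measure $\nu_a$ to Lebesgue measure on $[0,1]$. One clean route is to prove that the slicing $y\mapsto \dim_{\rm H}L_y$ is, for typical $a$, constant Lebesgue-a.e.\ by exploiting the fact that $L_y$ and $L_{y'}$ are related by the IFS combinatorics whenever $y,y'$ lie in the same cylinder of the $y$-axis attractor, combined with an ergodicity argument for the shift; the strong exponential separation (Section~\ref{od27}) again enters to ensure the cylinder structure on the $y$-axis is well-behaved. Another potential subtlety is that the level set $L_y$ for a boundary value of $y$ can degenerate (as Baker–Bender show for small $a$), but such $y$ form a Lebesgue-null set and are harmless. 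Combining the uniform upper bound of Theorem~\ref{od68} with this a.e.\ lower bound, and taking the union of the three zero-dimensional exceptional sets, completes the proof.
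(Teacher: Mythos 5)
The outline is reasonable — upper bound from Theorem~\ref{od68}, lower bound from a Ledrappier--Young slicing argument — but there is a genuine gap at the step you yourself flag as the main obstacle, and the workarounds you suggest do not close it.

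First, a concrete technical issue: you propose to use the Bernoulli measure with uniform weights $1/3$, which projects to Lebesgue measure on the $x$-axis, and assert that it has exact dimension $s_0$. That is not the case. The Feng--Hu formula (Theorem~\ref{od15}) applied to the uniform $1/3$-measure gives $\dim_{\rm H}\mu_a=2-\chi_1/\log 3$ (when the projected dimension is $1$), where $\chi_1=-\frac13(2\log a+\log(2a-1))$, and this does \emph{not} equal $s_0=1+\frac{\log(4a-1)}{\log 3}$ for generic $a$. The measure whose dimension equals the affinity dimension is the one with weights $p_i=\beta_i(1/3)^{s_0-1}$ (as in Section~\ref{od78}), and that measure does \emph{not} project to Lebesgue on the $x$-axis, nor does its $y$-projection have any reason to be absolutely continuous — Lemma~\ref{od23} only gives $L^q$-dimension~$1$, which is strictly weaker than $\nu_a\ll\mathcal{L}^1$. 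So the claim ``show $\nu_a\ll\mathcal{L}^1$ for typical $a$'' is exactly the point that needs a new idea, and neither Fubini nor a vague ergodicity argument supplies it.

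The paper's route is different and circumvents this. It first passes to a homogeneous subsystem $\mathcal{S}_m$ (all maps with the same contraction ratio, controlled count of the digit $2$), whose projected measure $\mu_{m,a}$ \emph{can} be written as a convolution $\varrho^{(k)}_a\ast\eta^{(k)}_a$; Solomyak's Fourier-decay theorem (Theorem~\ref{od10}) plus Hochman (Theorem~\ref{od07}) plus Shmerkin--Solomyak (Theorem~\ref{od09}) then give $\mu_{m,a}\ll\mathcal{L}^1$ for typical $a$ (Proposition~\ref{od57}). Feng--Hu applied to the subsystem gives $\dim_{\rm H}L_y\ge s_0-1-\varepsilon$ for $\mathcal{L}^1$-a.e.\ $y\in\mathcal{K}_m$ (Lemma~\ref{od16}), where $\varepsilon\to 0$ as $m\to\infty$. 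Finally — and this is the step entirely absent from your proposal — a Lebesgue density point argument combined with the self-similarity of the $y$-projection system upgrades ``a.e.\ $y\in\mathcal{K}_m$'' to ``a.e.\ $y\in[0,1]$'': a density point of the bad set $\mathrm{Bad}_\varepsilon$ is shown to contain a rescaled copy $S_{\iiv}(\mathcal{K}_m)$ of Lebesgue measure too large to avoid $\mathrm{Bad}_\varepsilon$, contradicting Lemma~\ref{od16}. Without some replacement for that density argument (and for the convolution/Fourier-decay mechanism that produces an absolutely continuous projected measure to feed into Feng--Hu), the lower bound remains unproved.
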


We remark that $\mathcal{E}$ denotes different sets in our three main theorems. In particular, the exceptional sets in Theorem~\ref{od70} and \ref{od68} are both contained in the set defined by Theorem \ref{od97}, but we do not investigate their structure any further.

\section{Preliminaries}\label{od33}

\subsection{Elements of dimension theory}\label{od34}

Our main focus in this paper is giving a formula for the Hausdorff dimension of certain self-affine and self-similar sets and measures. We will work with iterated function systems defined either on $\mathbb{R}^2$ or on $\mathbb{R}$, and define the fractal dimensions in whole generality for $\mathbb{R}^d$ with $d\geq 1$. For further properties of these notions of fractal dimensions, we refer the reader to the books \cite{barany2023self, whichfalconerbook, fraser2020assouad}.

\begin{definition}\label{od41}
  Let $E\subset\mathbb{R}^d$ and $t\geq 0$. For $\delta>0$ we consider the Hausdorff pre-measure which is the following set function
  \begin{equation}\label{od24}
    \mathcal{H}^t_{\delta}(E) := \inf \left\{
      \sum_{i=1}^{\infty} |A_i|^t: \{A_i\}_{i=1}^{\infty}
      \mbox{ is a $\delta$-cover of $E$}
    \right\}.
  \end{equation}
  The $t$-dimensional \texttt{Hausdorff measure} of $E$ is
  \[
    \mathcal{H}^t(E) := \lim_{\delta\to 0}\mathcal{H}^t_{\delta}(E).
  \]
  We define the \texttt{Hausdorff dimension} of $E$ as
  \[
    \dim_{\rm H}E := \inf\{t:\mathcal{H}^t(E)=0\}
                  = \sup\{t:\mathcal{H}^t(E)=\infty\}.
  \]
\end{definition}

\begin{definition}\label{od14}
  Let $E\subset\mathbb{R}^d$ be a bounded set.
  For $\delta>0$, let $N_{\delta}(E)$ be the minimal number of sets of diameter $\delta$ needed to cover $E$. The \texttt{lower and upper box dimensions} of $E$ are defined by
  \begin{align*}
    \underline{\dim}_{\rm B}E &=\liminf_{\delta\to 0}\frac{\log N_{\delta}(E)}{-\log\delta}, \\
    \overline{\dim}_{\rm B}E &:=\limsup_{\delta\to 0}\frac{\log N_{\delta}(E)}{-\log\delta}.
  \end{align*}
  If the limit exists, we call it the \texttt{box dimension} of $E$ and denote it with $\dim_{\rm B}E$.
\end{definition}

Using the common notation, we write $B(x,r)$ for a closed ball of radius $r>0$ around $x\in\mathbb{R}^d$ and $d_{\mathcal{H}}$ for the Hausdorff distance.

\begin{definition}
	Let $E\subset\mathbb{R}^d$ be a bounded set. We define the \texttt{Assouad dimension} of $E$ as
	\[
	\begin{split}
	\dim_{\rm A}E=\inf\big\{\alpha>0:&\text{ there exists $C>0$ such that }\\
  &N_{r}(E\cap B(x,R))\leq C\left(\frac{R}{r}\right)^\alpha\\
	&\text{ for all }0<r<R<|E|\text{ and }x\in E\big\}.
	\end{split}
	\]
\end{definition}

\begin{definition}
    Let $E,F\subset \mathbb{R}^d$ be closed sets with $F\subset B(0,1)$. Suppose there exists a sequence of homotheties $T_k:\mathbb{R}^d\to\mathbb{R}^d$ such that $d_{\mathcal{H}}(F,T_k(E)\cap B(0,1))\to 0$ as $k\to\infty$, where $d_{\mathcal{H}}$ denotes the Hausdorff metric.
    Then $F$ is called a \texttt{weak tangent} to $E$.
\end{definition}

According to K{\"a}enm{\"a}ki, Ojala and Rossi \cite{kaenmaki2018rigidity}, we can determine the Assouad dimension of a compact set with the help of its weak tangents.
For the proof of the next proposition, see \cite[Theorem~2.3.1]{fraser2020assouad}.
\begin{proposition}\label{od08}
    Let $E\subset \mathbb{R}^d$ be a non-empty compact set. {Then the Assouad dimension} of $E$ is
    \[
      \dim_{\rm A}E = \sup\{\dim_{\rm H}F:\mbox{ $F$ is a weak tangent to $E$} \}.
    \]
\end{proposition}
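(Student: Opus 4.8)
My plan is to prove the two inequalities $\sup\{\dim_{\rm H}F:F\text{ a weak tangent to }E\}\le\dim_{\rm A}E$ and $\ge$ separately, writing $s=\dim_{\rm A}E$ (which is finite since $s\le d$). For ``$\le$'' I would use that $\dim_{\rm H}F\le\dim_{\rm A}F$ for any bounded set, so it suffices to show that weak tangents cannot increase the Assouad dimension. Given a weak tangent $F$ with witnessing homotheties $T_k$ of ratios $\lambda_k$ and $d_{\mathcal H}(F,T_k(E)\cap B(0,1))\to0$, fix an exponent $\alpha>s$ with the corresponding constant $C$ from the definition of $\dim_{\rm A}E$, a point $x\in F$, and scales $0<r<R<|F|$. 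For $k$ large the Hausdorff error $\eta_k$ is $<r/4$, so a minimal $(r/2)$-cover of $T_k(E)\cap B(x,R+\eta_k)$ thickens to an $r$-cover of $F\cap B(x,R)$; pulling this back by $T_k^{-1}$ to a ball of $E$ centred near $T_k^{-1}x$ (truncating its radius at $|E|$ if necessary) and applying the Assouad inequality for exponent $\alpha$ gives $N_r(F\cap B(x,R))\le C\left(\tfrac{2(R+\eta_k)}{r}\right)^{\alpha}\le C4^{\alpha}(R/r)^{\alpha}$. Hence $\dim_{\rm A}F\le\alpha$, and letting $\alpha\downarrow s$ finishes this direction.

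For ``$\ge$'' I would fix $t<s$ and a small $\varepsilon>0$ with $t<s-3\varepsilon$, and first recall the reformulation $s=\limsup_j\frac{\log M_j}{j\log2}$, where $M_j=\sup\{N_{2^{-j}R}(E\cap B(x,R)):x\in E,\ 0<R<|E|\}$ is finite (as $M_j\le C_d2^{jd}$); this is immediate from the definition of the Assouad dimension. Consequently, for infinitely many $j$ there is a ball $B(x_j,R_j)$ with $N_{2^{-j}R_j}(E\cap B(x_j,R_j))>2^{(s-\varepsilon)j}$, while automatically $N_{2^{-k}R_j}(E\cap B(x_j,R_j))\le M_k\le C\,2^{(s+\varepsilon)k}$ for all $0\le k\le j$. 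Rescaling $B(x_j,R_j)$ onto $B(0,1)$ by a homothety $S_j$ and applying the Blaschke selection theorem to closed subsets of $\overline{B(0,1)}$, I would extract a subsequence along which $S_j(E)\cap\overline{B(0,1)}$ converges in the Hausdorff metric to a compact set, which is by construction a weak tangent to $E$.

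The main obstacle will be that this naive rescaling limit need not have large Hausdorff dimension: the empirical measure carried by the $2^{-j}R_j$-separated points of $E\cap B(x_j,R_j)$ may concentrate onto a small part of the ball, so a priori one only controls the limit's lower box dimension — equivalently its Assouad dimension — and not its Hausdorff dimension. To turn ``many separated points at one scale'' into a Frostman measure without losing the exponent, I would regularize: the depth-$j$ covering tree of $E\cap B(x_j,R_j)$ has per-level branching bounded by $C_d2^d$, so a total branching exceeding $2^{(s-\varepsilon)j}$ forces a definite proportion of levels to branch at rate $\gtrsim 2^{s-2\varepsilon}$; a pigeonholing/block argument then locates a sub-ball and a long block of consecutive scales on which the branching is nearly uniform, and the uniform measure on the associated combinatorial tree passes to the limit — first letting the number of retained scales tend to infinity (using weak-$*$ compactness of probability measures on $\overline{B(0,1)}$ and the transitivity of the weak-tangent relation, a weak tangent of a weak tangent being again a weak tangent, together with a diagonal argument) and then letting $\varepsilon\downarrow0$ — yielding a weak tangent $F$ carrying a probability measure $\mu$ with $\mu(B(y,\rho))\le C\rho^{\,s-3\varepsilon}$ at all scales $\rho\in(0,1)$. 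The mass distribution principle then gives $\dim_{\rm H}F\ge s-3\varepsilon>t$, so the supremum over weak tangents is $\ge s$; combined with the first part this gives the claimed equality. For the technical execution of this regularization step I would follow Käenmäki, Ojala and Rossi \cite{kaenmaki2018rigidity}.
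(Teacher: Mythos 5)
The paper does not itself prove this statement; it records it as a proposition cited from K\"aenm\"aki, Ojala and Rossi \cite{kaenmaki2018rigidity}, so there is no in-paper argument to compare against. Your two-sided plan has the right shape. The ``$\le$'' half --- weak tangents do not increase Assouad dimension, hence $\dim_{\rm H}F\le\dim_{\rm A}F\le\dim_{\rm A}E$ --- is a standard argument and is essentially complete as you sketch it; you should replace the centre $x\in F$ by a nearby point of $T_k(E)$ before pulling back so that the pulled-back ball is centred in $E$, and the case where the pulled-back radius exceeds $|E|$ is handled by covering $E$ with boundedly many balls of radius $|E|/2$ and folding the extra factor into the constant, but these are routine. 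You also correctly identify the central obstruction in the ``$\ge$'' half: a raw Hausdorff limit of rescaled pieces of $E$ may concentrate, so from ``many $2^{-j}R_j$-separated points'' one only gets a lower bound on box-type counting of the limit, not on its Hausdorff dimension. (One small slip: this controls the limit's lower box dimension, which is \emph{not} the same as its Assouad dimension; but this is tangential.)

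The regularization step you propose to close the gap, however, is not carried out and is not quite right as stated. A covering tree has no single ``branching sequence'' --- branching is a per-node quantity --- so a positive proportion of high-branching \emph{levels} does not pigeonhole to a long \emph{consecutive} block of scales on which the branching is uniformly large inside a single sub-ball; and localising to a sub-ball where the level-$j$ count is dense simply reproduces the original problem over a shorter range of scales with a degraded exponent. The actual repair --- constructing a Frostman measure on a weak tangent by combining the two-sided bounds $2^{(s-\varepsilon)j}\le N_{2^{-j}R_j}(E\cap B(x_j,R_j))$ and $N_{2^{-k}R_j}(E\cap B(x_j,R_j))\le C2^{(s+\varepsilon)k}$ with weak-$*$ compactness of measures, transitivity of the weak-tangent relation, and a diagonalisation --- is exactly what you gesture at and then explicitly defer to \cite{kaenmaki2018rigidity}. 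So your proposal isolates the difficulty correctly and points to the right tools, but the ``$\ge$'' direction remains an outline with an acknowledged gap rather than a self-contained proof.
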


The following inequalities always hold for any bounded set $E\subset\mathbb{R}^d$ \cite[Lemma~2.4.3]{fraser2020assouad}
\begin{equation}\label{cr13}
  \dim_{\rm H}E \leq\underline{\dim}_{\rm B}E\leq \overline{\dim}_{\rm B}E \leq \dim_{\rm A}E .
\end{equation}

To prove a formula for the dimension of a set $E$, we can apply results on the dimension of a measure $\mu$ supported on $E$.
\begin{definition}\label{od39}
  Let $\mu$ be a Borel probability measure on $\mathbb{R}^d$. We define the \texttt{Hausdorff dimension} of $\mu$ as
  \[
    \dim_{\rm H}\mu = \inf\{\dim_{\rm H}E: \mu(E^c)=0\},
  \]
where $E^c$ denotes the complement of the set $E$.
\end{definition}

The Hausdorff dimension of a measure $\mu$ is strongly related to the lower local dimension of $\mu$.
\begin{definition}\label{od40}
  Let $\mu$ be a Borel probability measure on $\mathbb{R}^d$ and $x\in\supp (\mu)$.
  The \texttt{lower local dimension} of the measure $\mu$ at $x$ is
  \[
    \underline{\dim}_{\rm loc}\mu(x) = \liminf_{r\to0}\frac{\log \mu(B(x,r))}{\log r},
  \]
  while its \texttt{upper local dimension} is
  \[
    \overline{\dim}_{\rm loc}\mu(x) = \limsup_{r\to0}\frac{\log \mu(B(x,r))}{\log r}.
  \]
  If the limit exists, we define the \texttt{local dimension} of $\mu$ at $x$ as
  \[
    \dim_{\rm loc}\mu(x) = \lim_{n\to\infty}\frac{\log \mu(B(x,r))}{\log r}.
  \]
\end{definition}

It is well-known that
$$
\dim_{\rm H}\mu=\essinf_{x\sim\mu}\underline{\dim}_{\rm loc}\mu(x),
$$
see for example \cite[Theorem~1.9.5]{barany2023self}.

Let us finally define the $L^q$-dimension of probability measures on $\mathbb{R}^d$ for $q>1$.

\begin{definition}\label{od02}
	Let $q\in(1,\infty)$. If $\mu$ is a probability measure on $\mathbb{R}^d$ with bounded support, then
	\[
	D(\mu,q)=\liminf_{r\to0} \frac{\log\int\mu(B(x,r))^{q-1}d\mu(x)}{(q-1)\log r}
	\]
	is the \texttt{$L^q$ dimension} of $\mu$.
\end{definition}

The next lemma is a basic application of the $L^q$-dimension. It provides estimates to the local dimension of the measure $\mu$ at every point. For a proof, see Shmerkin \cite[Lemma~1.7]{shmerkin2019furstenberg}.

\begin{lemma}[Shmerkin]\label{od01}
	Let $\mu$ be a Borel probability measure on a compact interval of $\mathbb{R}$. If $D(\mu,q)>s$ for some $q\in(1,\infty)$, then there is $r_0>0$ such that
	\[
	\mu(B(x,r))\leq r^{\left(1-\frac{1}{q}\right)s} \mbox{ for all }
	x\in\mathbb{R}, r\in(0,r_0].
	\]
	In particular, $\underline{\dim}_{\rm loc}\mu(x)\geq\left(1-1/q\right)s$ for every $x\in\mathbb{R}$.
\end{lemma}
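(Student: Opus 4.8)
The plan is to unpack the definition of the $L^q$-dimension into an averaged ball estimate and then promote it, by an elementary doubling argument, to a pointwise bound valid at every point; this is exactly Shmerkin's argument, and it is short and self-contained.

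\emph{Step 1 (an integral bound).} Since $D(\mu,q)>s$, Definition~\ref{od02} furnishes an exponent $t$ with $s<t$ and a radius $r_1>0$ such that $\int\mu(B(x,r))^{q-1}\,d\mu(x)\le r^{(q-1)t}$ for all $r\in(0,r_1]$; here one uses $\log r<0$, which reverses the inequality governing the $\liminf$ when the denominator $(q-1)\log r$ is cleared.

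\emph{Step 2 (doubling trick).} The key observation is that for any $x\in\mathbb{R}$ and any $y\in B(x,r)$ one has $B(x,r)\subseteq B(y,2r)$, hence $\mu(B(y,2r))\ge\mu(B(x,r))$. Raising to the power $q-1>0$ and integrating over $y\in B(x,r)$ against $\mu$ gives
\[
\mu(B(x,r))^{q}\le\int_{B(x,r)}\mu(B(y,2r))^{q-1}\,d\mu(y)\le\int\mu(B(y,2r))^{q-1}\,d\mu(y)\le (2r)^{(q-1)t},
\]
valid whenever $2r\le r_1$. Taking $q$-th roots yields $\mu(B(x,r))\le 2^{(1-1/q)t}\,r^{(1-1/q)t}$ for all $x\in\mathbb{R}$ and $r\le r_1/2$ (for $x$ outside a neighbourhood of the support the left-hand side vanishes, so nothing is to be checked).

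\emph{Step 3 (cleaning up the exponent).} Because $t>s$ and $1-1/q>0$, the constant $2^{(1-1/q)t}$ is absorbed for $r$ small: one checks $2^{(1-1/q)t}r^{(1-1/q)t}\le r^{(1-1/q)s}$ as soon as $r\le 2^{-t/(t-s)}$. Setting $r_0=\min\{r_1/2,\,2^{-t/(t-s)}\}$ gives $\mu(B(x,r))\le r^{(1-1/q)s}$ for all $x\in\mathbb{R}$, $r\in(0,r_0]$. The "in particular" clause is then immediate: from $\log\mu(B(x,r))\le(1-1/q)s\log r$, dividing by $\log r<0$ and taking $\liminf_{r\to 0}$ yields $\underline{\dim}_{\rm loc}\mu(x)\ge(1-1/q)s$. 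There is no genuine obstacle; the only points needing care are the reversal of inequalities when dividing by $\log r<0$, and organising the doubling constant $2$ together with the slack exponent $t>s$ so that the final exponent comes out as exactly $(1-1/q)s$ — if only the local-dimension consequence is wanted, these constants are irrelevant and one may take $t=s$ throughout.
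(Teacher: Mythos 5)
Your proof is correct. Note that the paper itself does not prove this lemma—it is cited directly from Shmerkin \cite[Lemma~1.7]{shmerkin2016absolute}—so there is no in-paper proof to compare against; your argument is a faithful and self-contained reconstruction of Shmerkin's standard one: fix a slack exponent $t$ with $s<t<D(\mu,q)$, extract the integral bound $\int\mu(B(x,r))^{q-1}\,d\mu\le r^{(q-1)t}$ from the $\liminf$, upgrade it to a uniform pointwise estimate via the doubling inclusion $B(x,r)\subseteq B(y,2r)$ for $y\in B(x,r)$, and absorb the resulting factor $2^{(1-1/q)t}$ into the gap $t-s$ by shrinking $r_0$. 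The only cosmetic remark is that in Step~1 you should explicitly pick $t$ strictly between $s$ and $D(\mu,q)$ (you use $t>s$ in Step~3, so $t=D(\mu,q)$ would not work if the $\liminf$ is not attained); otherwise the bookkeeping of constants and the sign reversals from $\log r<0$ are all handled correctly.
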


In the upcoming sections we recall some of the most important results from the theory of self-similar and self-affine iterated function systems that are used in our proofs.

\subsection{Iterated Function Systems}

Iterated function systems (IFS) are finite lists of strict contractions $\mathcal{F}=\{f_i\}_{i=1}^m,\: m\geq 2$ defined on some metric space. Hutchinson \cite{hutchinson1981fractals} proved that there exists a unique non-empty compact set $\Lambda$ satisfying
\[
  \Lambda = \bigcup_{i=1}^m f_i(\Lambda).
\]
This set is called the \texttt{attractor} of the IFS.
We can code the points of $\Lambda$ by the elements of $\Sigma=\{1,\dots,m\}^{\mathbb{N}}$. Let $\sigma\colon\Sigma\to\Sigma$ be the usual left-shift operator. The space $(\Sigma, \sigma)$ is called the \texttt{symbolic space}, and the mapping
\[
\Pi:\Sigma\to\Lambda, \quad
\Pi(\iiv):=\lim_{n\to\infty} f_{i_1}\circ\cdots\circ f_{i_n}(0)
\]
is called the \texttt{natural projection}. Note that there is a natural metric on $\Sigma$ by $d(\iiv,\jjv)=2^{-\min\{k\geq1:i_k\neq j_k\}}$ with respect to which $\Sigma$ is a compact metric space.

We write $\Sigma_n$ for the set of $n$ length words and $\Sigma_{\ast}$ for the set of all finite words. Let us denote the length of $\iiv\in\Sigma_*$ by $|\iiv|$. For the first $n\in\mathbb{N}$ digits of $\iiv=i_1i_2\dots\in\Sigma$, we use the notation $\iiv|_n:=(i_1\dots i_n)$. For $\iiv\in\Sigma_*$, the set
\[
  [\iiv]:=\{\jjv\in\Sigma: \jjv|_{|\iiv|} = \iiv\}
\]
is called the \texttt{cylinder} of $\iiv$. For $\iiv=(i_1,\ldots,i_n)\in\Sigma_*=\bigcup_{n=0}^\infty\{1,\ldots,m\}^n$, let $f_{\iiv}$ denote the composition $f_{i_1}\circ\cdots\circ f_{i_n}$.

\subsubsection{Self-similar tools}

Since the self-similar IFSs we deal with are one dimensional, we will only recite the one dimensional version of the theorems we used in our proofs. However, most tools mentioned in this section also work with higher dimensional self-similar iterated function systems.

Fix $m\geq 2$, and let $|r_k|<1, r_k\neq 0$ and $t_k\in\mathbb{R}$ be arbitrary parameters for every $k\in\{1,\dots,m\}$.
If our iterated function system $\mathcal{F}$ is of the form
  \[
    \mathcal{F}=\left\{ f_k(\mathbf{x})=r_k\cdot \mathbf{x}+t_k\right\}_{k=1}^m,
  \]
then $\mathcal{F}$ is called \texttt{self-similar}. All the mappings in $\mathcal{F}$ are similarities of the real line. We will often write $\mathbf{r}=(r_1,\dots,r_m)$ and $\mathbf{t}=(t_1,\dots,t_m)$ for the contraction and translation vectors of $\mathcal{F}$ respectively.

\begin{definition}\label{od13}
  The \texttt{similarity dimension} of $\mathcal{F}$ is the unique number $s_0$ defined as
  \[
    \sum_{i=1}^m |r_i|^{s_0} = 1.
  \]
\end{definition}
It is a straightforward consequence of the definitions, that
\begin{equation}\label{od12}
    \dim_{\rm H}\Lambda \leq s_0,
\end{equation}
where $\Lambda$ is the attractor of the self-similar IFS $\mathcal{F}$.

\begin{definition}
  Let $\mathbf{p}=(p_1,\dots,p_m)$ be a probability vector. The \texttt{self-similar measure} of $\mathcal{F}$ with respect to $\mathbf{p}$ is a Borel probability measure $\mu$ on $\mathbb{R}^d$ such that for all Borel set $E\subset\mathbb{R^d}$
  \begin{equation}\label{od79}
    \mu(E)=\sum_{i=1}^m p_i\mu(f_i^{-1}(E)).
  \end{equation}
\end{definition}
It was proved by Hutchinson \cite{hutchinson1981fractals} that the Borel probability measure $\mu$ satisfying \eqref{od79} exists, and it is unique.
Notice that the self-similar measure $\mu$ is defined by $\mathbf{r},\mathbf{t}$ and $\mathbf{p}$.

Let us denote by $h_\mathbf{p}$ the entropy and by $\chi(\mathbf{p})$ the Lyapunov exponent. That is,
$$
h_\mathbf{p}=-\sum_{i=1}^mp_i\log p_i\text{ and }\chi(\mathbf{p})=-\sum_{i=1}^mp_i\log|r_i|.
$$
It is easy to see that
\begin{equation}\label{od12b}
	\dim_{\rm H}\mu \leq\min\left\{1,\frac{h_{\mathbf{p}}}{\chi(\mathbf{p})}\right\}.
\end{equation}

Under sufficient separation conditions, one obtains equality in \eqref{od12} and \eqref{od12b}. We define the distance of two similarity mappings
$g_1(x)=r_1x+\tau_1$ and $g_2(x)=r_2x+\tau_2$, $r_1,r_2\in (-1,1)\setminus \left\{0\right\}$, on the real line as
\begin{equation*}
  \mathrm{dist}\left(g_1,g_2\right):=
  \left\{
    \begin{array}{ll}
      |\tau_1-\tau_2|, & \mbox{if $r_1=r_2$;} \\
      \infty , & \mbox{otherwise.}
    \end{array}
  \right.
\end{equation*}

\begin{definition}
  We say that the self-similar IFS $\mathcal{F}$ satisfies the \texttt{Expo\-nential Separation Condition} (ESC) if
  there exists a $c>0 $ such that
  \begin{equation}\label{cv90}
    \mathrm{dist}(f_{\iiv}, f_{\jjv})  \geq c^{n} \mbox{ for all }
    \ell  \mbox{ and for all } \iiv,\jjv\in \left\{1, \dots ,m\right\}^{n},\ \iiv\ne\jjv
  \end{equation}
  for infinitely many $n\in\mathbb{N}$. If we one can find an $N>0$ such that \eqref{cv90} holds for every $n>N$, then we say that $\mathcal{F}$ satisfies the \texttt{Strong Exponential Separation Condition} (SESC).
\end{definition}

\begin{theorem}[Hochman {\cite[Theorem~1.1]{hochman2014self}}]\label{od07}
    Let $\mathcal{F}$ be a self-similar IFS on $\mathbb{R}$ and let $\mu$ be a self-similar measure defined by the IFS $\mathcal{F}$ and some probability vector $\mathbf{p}$. If $\mathcal{F}$ satisfies the ESC, then
    \[
      \dim_{\rm H}\mu = \min\left\{1,\frac{h_{\mathbf{p}}}{\chi(\mathbf{p})}\right\}.
    \]
    In particular, $\dim_{\rm H}\Lambda=\min\{1,s_0\}$.
\end{theorem}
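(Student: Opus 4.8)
Since this statement is Hochman's theorem, I only sketch the strategy of its proof from \cite{hochman2014self}. The upper bound $\dim_{\rm H}\mu\le\min\{1,h_{\mathbf p}/\chi(\mathbf p)\}$ is elementary and already recorded as \eqref{od12b}: cover $\Lambda$ by generation-$n$ cylinders and apply the Shannon--McMillan--Breiman theorem to the Bernoulli measure $\mathbf p^{\mathbb N}$ on $\Sigma$. Everything of substance lies in the reverse inequality. I would begin by recalling (Feng--Hu) that a self-similar measure on $\mathbb R$ is exact dimensional; consequently $\dim_{\rm H}\mu$ equals the $\mu$-a.e.\ local dimension and is captured by dyadic entropy, $\dim_{\rm H}\mu=\lim_{n\to\infty}\frac{1}{n\log 2}H(\mu,\mathcal D_n)$, where $\mathcal D_n$ is the level-$n$ dyadic partition of $\mathbb R$ and $H$ denotes Shannon entropy. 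It therefore suffices to show that $\frac1n H(\mu,\mathcal D_n)$ cannot stay bounded away from $\min\{1,h_{\mathbf p}/\chi(\mathbf p)\}\cdot\log 2$.

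Next I would exploit the self-convolution structure of $\mu$. For each $n$, choose a stopping set of words whose contraction ratio is $\approx 2^{-n}$; then, up to a uniformly bounded distortion of scale, $\mu=\theta_n*\bigl((S_{2^{-n}})_*\mu\bigr)$, where $S_r$ denotes scaling by $r$ and $\theta_n=\sum_{\iiv}p_{\iiv}\,\delta_{f_{\iiv}(0)}$ is the atomic measure carried by the generation-$n$ cylinder translates. Iterating presents $\mu$ as a (nearly exact) infinite convolution of rescaled copies of the $\theta_n$, and $\frac1n H(\mu,\mathcal D_n)$ can be analysed through the behaviour of entropy under these convolutions at \emph{all} intermediate scales $k\le n$, using that entropy is nearly additive when one factor is close to uniform on the cells it meets and nearly unchanged when one factor is close to a point mass.

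The crux is Hochman's inverse theorem for the entropy of convolutions: for every $\varepsilon>0$ there is $\delta>0$ such that, for all large $n$, if $\frac1n H(\nu*\lambda,\mathcal D_n)\le\frac1n H(\lambda,\mathcal D_n)+\delta$ then for all but an $\varepsilon$-fraction of scales $k\le n$, at scale $2^{-k}$ the measure $\nu$ is concentrated near a single dyadic cell or $\lambda$ is nearly uniform on the cells it meets. Suppose, for contradiction, that $\dim_{\rm H}\mu=\alpha<\min\{1,h_{\mathbf p}/\chi(\mathbf p)\}$. Then $\frac1n H(\mu,\mathcal D_n)\to\alpha\log 2$ is too small, and feeding the decomposition $\mu=\theta_n*(\cdots)$ into the inverse theorem repeatedly forces $\theta_n$ --- hence the configuration of maps $\{f_{\iiv}\}_{|\iiv|=n}$ --- to concentrate on approximate arithmetic progressions at a positive-density set of scales, uniformly in $n$. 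Quantifying this concentration against the geometric contraction by the ratios $r_i$ yields that, along a subsequence of $n$, there exist distinct words $\iiv\ne\jjv$ with $\mathrm{dist}(f_{\iiv},f_{\jjv})$ super-exponentially small, i.e.\ $-\frac1n\log\mathrm{dist}(f_{\iiv},f_{\jjv})\to\infty$; this contradicts the ESC \eqref{cv90}. Hence $\dim_{\rm H}\mu=\min\{1,h_{\mathbf p}/\chi(\mathbf p)\}$. For the ``in particular'' clause, take $\mathbf p=(|r_1|^{s_0},\dots,|r_m|^{s_0})$, so that $h_{\mathbf p}/\chi(\mathbf p)=s_0$; then $\dim_{\rm H}\Lambda\ge\dim_{\rm H}\mu=\min\{1,s_0\}$, which matches the trivial upper bound \eqref{od12}.

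The main obstacle is unquestionably the inverse theorem invoked above. It is a continuous, multiscale analogue of Freiman-type structure theorems for sets of small doubling, and its proof --- through submodularity of entropy, a Balog--Szemer\'edi--Gowers mechanism, and careful bookkeeping across dyadic scales --- is the genuine technical core of \cite{hochman2014self}. By comparison, the exact-dimensionality input, the convolution decomposition, and the extraction of the super-exponential concentration that contradicts separation are all comparatively routine.
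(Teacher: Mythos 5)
This theorem is quoted, not proved, in the paper: it is cited directly from Hochman \cite{hochman2014self}, so there is no in-paper argument to compare against. Your sketch is a faithful account of Hochman's proof strategy: the elementary upper bound, exact dimensionality of self-similar measures (Feng--Hu) reducing the problem to dyadic entropy, the approximate self-convolution structure of $\mu$ obtained by stopping-time decompositions of the cylinder tree, the inverse theorem for entropy of convolutions, and the derivation of super-exponential coincidences among the $f_{\iiv}$ that contradict the ESC. You also correctly identify the inverse theorem as the genuine technical core of Hochman's paper. One small imprecision: the proof of the inverse theorem in \cite{hochman2014self} rests on the Kaimanovich--Vershik entropy inequality, the entropy analogue of Pl\"unnecke--Ruzsa, and a careful multi-scale decomposition into component measures, rather than a Balog--Szemer\'edi--Gowers mechanism; the spirit is indeed additive-combinatorial, but BSG is not among the ingredients. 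This does not affect the soundness of your outline.
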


\begin{theorem}[Shmerkin {\cite[Theorem~6.6]{shmerkin2019furstenberg}}]\label{od07b}
	Let $\mathcal{F}$ be a self-similar IFS on $\mathbb{R}$ and let $\mu$ be a self-similar measure defined by the IFS $\mathcal{F}$ and a probability vector $\mathbf{p}$. If $\mathcal{F}$ satisfies the ESC, then
	\[
	D(\mu,q) = \min\left\{1,\frac{\tau(q)}{q-1}\right\},
	\]
	where $\tau(q)$ is the unique solution of the equation $\sum_{i=1}^mp_i^q|r_i|^{-\tau(q)}=1$.
\end{theorem}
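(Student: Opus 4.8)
The plan is to deduce this from Shmerkin's multifractal analysis of self-similar measures under exponential separation, in the spirit of the projection/smoothing arguments in \cite{shmerkin2019furstenberg}. The upper bound $D(\mu,q)\le\min\{1,\tau(q)/(q-1)\}$ is elementary and separation-free: since $\mu$ is supported on a bounded interval and the pieces $f_i([\![\mu]\!])$ have lengths comparable to $|r_i|$, iterating the self-similarity identity $\mu=\sum_i p_i\, (f_i)_*\mu$ gives, for $r\approx \prod |r_{i_k}|$, a decomposition of $\int\mu(B(x,r))^{q-1}d\mu(x)$ into cylinder contributions whose exponential rate is governed by $\sum_i p_i^q|r_i|^{-\tau(q)}=1$; this yields $\limsup$ and hence $\liminf$ at least $\ge$ the claimed value is false in general, so one instead shows directly $\int\mu(B(x,r))^{q-1}d\mu\gtrsim r^{\tau(q)}$ by restricting to a single well-chosen cylinder of that scale, giving $D(\mu,q)\le\tau(q)/(q-1)$, together with the trivial bound $D(\mu,q)\le 1$ valid for any measure on $\mathbb{R}$ (as $\int\mu(B(x,r))^{q-1}d\mu\ge$ a constant times $r^{q-1}$ when $\mu$ has an atom-free part with bounded support; more carefully, $\mu(B(x,r))\le 1$ forces the integrand $\le \mu(B(x,r))$, and $\int \mu(B(x,r))\,d\mu \le $ const$\cdot r$ fails too — so the correct trivial bound $D(\mu,q)\le 1$ comes from Jensen together with $\int\mu(B(x,r))d\mu \le C r$ being replaced by the fact that a measure on $\mathbb{R}$ has $\dim_{\rm loc}\le 1$ a.e., hence $D(\mu,q)\le 1$).

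For the lower bound $D(\mu,q)\ge\min\{1,\tau(q)/(q-1)\}$, which is the substantive direction, I would invoke Shmerkin's theorem on $L^q$ dimensions of self-similar measures: under the ESC, self-similar measures on $\mathbb{R}$ are ``$L^q$-regular'' in the sense that their $L^q$ spectrum equals the one predicted by the similarity data, with no dimension drop below $\min\{1,\tau(q)/(q-1)\}$. Concretely, \cite[Theorem~6.6]{shmerkin2019furstenberg} (or the combination of the inverse theorem for the $L^q$ norm of convolutions with the entropy-porosity machinery developed there) states exactly the formula in the statement, so the proof is essentially a citation: verify that $\mathcal{F}$ is self-similar on $\mathbb{R}$ and satisfies the ESC, check that $\tau(q)$ as defined (the unique root of $\sum p_i^q |r_i|^{-\tau(q)}=1$, which exists and is unique because the left side is continuous, strictly decreasing in $\tau$, equals $\sum p_i^q<1$ at $\tau=0$ and tends to $+\infty$ as $\tau\to+\infty$), and conclude. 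The only genuine content on our side is confirming the hypotheses are met in the cases where we apply the theorem.

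The main obstacle, and the reason this lemma is stated separately rather than used as a black box, is matching conventions: Shmerkin's paper may phrase the result in terms of the $L^q$ spectrum $\tau_\mu(q)$ with a different normalization (e.g.\ $\tau(q)$ vs.\ $\tau(q)/(q-1)$, or $q$ vs.\ $q-1$ in exponents), and one must check that the ``$\min\{1,\cdot\}$'' truncation is correctly placed and that the case $q\in(1,\infty)$ is fully covered (Shmerkin's strongest results are often stated for $q\in(1,2]$ and then extended). I would therefore devote the bulk of the proof to carefully translating between Definition~\ref{od02} and the normalization in \cite{shmerkin2019furstenberg}, noting that for $q\in(1,2]$ the result is \cite[Theorem~1.3/6.6]{shmerkin2019furstenberg} directly, and for $q>2$ it follows either from the same theorem (if stated in that generality) or by an interpolation/monotonicity argument using that $q\mapsto (q-1)D(\mu,q)$ is concave and the fact that the upper bound already matches. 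Once the normalizations are reconciled, the equality is immediate from Theorem~\ref{od07b} as cited, and no further estimates are required.
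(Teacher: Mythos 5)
This statement is quoted directly from Shmerkin's paper and the present paper offers no proof beyond the citation, so your proposal — which ultimately reduces to invoking \cite[Theorem~6.6]{shmerkin2019furstenberg} after checking hypotheses and normalizations — takes essentially the same approach and is correct. (Your first paragraph's hand-wringing over the trivial bound is garbled; the clean route is the power-mean inequality over an $r$-partition of the support, giving $D(\mu,q)\le\overline{\dim}_{\rm B}\,\supp\mu\le 1$, but none of that is needed once the theorem is cited.)
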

This theorem combined with Lemma \ref{od01} can be used to obtain a lower bound for the local dimension of self-similar measures for every point.

For a Borel probability measure $\mu$ on $\mathbb{R}$, we define its Fourier transform as
\[
  \widehat{\mu}(t)=\int_{\mathbb{R}}e^{itx}\mathrm{d}\mu(x).
\]
\begin{theorem}[Solomyak {\cite[Theorem~1.3]{solomyak2021fourier}}]\label{od10}
    There exists an exceptional set $\mathcal{E}\subset (0,1)^m$ of zero Hausdorff dimension such that for all $\mathbf{r}\in(0,1)^m\setminus\mathcal{E}$, for all choices of $\mathbf{t}$ such that the fixed points $t_i(1-r_i)^{-1}$ are not all equal and for all $(p_1,\dots,p_m)$ probability vector of non-zero entries, the corresponding self-similar measure $\mu=\mu_{\mathbf{r},\mathbf{t},\mathbf{p}}$ satisfies
    \[
      \exists \alpha>0\ \exists C>0: |\widehat{\mu}(t)|\leq C|t|^{-\alpha} \mbox{ for every $t\in\mathbb{R}$}.
    \]
\end{theorem}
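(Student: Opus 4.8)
The plan is to run the multiscale Erd\H{o}s--Kahane argument that underlies Solomyak's proof. First I would conjugate the IFS by an affine map so that the attractor lies in $[0,1]$ and the fixed points of $f_1$ and $f_2$ are distinct, and write $\mu=\Pi_*\mathbb{P}$, where $\mathbb{P}$ is the Bernoulli measure on $\Sigma$ with marginal $\mathbf{p}$. Set $g(\xi)=\sum_{i=1}^m p_i e^{\mathrm{i}t_i\xi}$, the Fourier transform of the one-step measure $\sum_i p_i\delta_{t_i}$. Since not all $t_i$ coincide, $|g(\xi)|\le 1$ with $|g(\xi)|\le 1-c\,\mathrm{dist}(\xi,G)^2$ whenever $\mathrm{dist}(\xi,G)$ is small, where $G=\tfrac{2\pi}{t_1-t_2}\mathbb{Z}$ is a fixed arithmetic progression and $c>0$. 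Iterating the self-similarity identity $\widehat\mu(\xi)=\sum_i p_i e^{\mathrm{i}t_i\xi}\widehat\mu(r_i\xi)$ gives, for every $n$,
\[
  \widehat\mu(\xi)=\sum_{\mathbf{i}\in\Sigma_n} p_{\mathbf{i}}\,e^{\mathrm{i}\tau_{\mathbf{i}}\xi}\,\widehat\mu(r_{\mathbf{i}}\xi),
\]
where $r_{\mathbf{i}}=r_{i_1}\cdots r_{i_n}$, $p_{\mathbf{i}}=p_{i_1}\cdots p_{i_n}$ and $\tau_{\mathbf{i}}$ is the translation part of $f_{\mathbf{i}}$.

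Next I would argue by contradiction. Suppose polynomial decay fails, so $|\widehat\mu(\xi_k)|\ge\xi_k^{-\epsilon_k}$ for some $\xi_k\to\infty$ and $\epsilon_k\to0$; put $N_k\asymp\log\xi_k$. A near-extremality (second moment) estimate for the convex combination above should show that, once $|\widehat\mu(\xi_k)|$ is this close to $1$, there is a set of $\omega\in\Sigma$ of $\mathbb{P}$-measure $1-o(1)$ along which $|\widehat\mu(r_{\omega|_n}\xi_k)|$ is $o(1)$-close to $1$, and hence, by the bound on $g$, $r_{\omega|_n}\xi_k$ lies within $\delta_k$ of $G$ for all but an $o(1)$-fraction of levels $n\le N_k$, with $\delta_k\to0$; equivalently $r_{\omega|_n}\xi_k\approx a_n$ for some point $a_n$ of $G$. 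Because $r_{\omega|_{n+1}}\xi_k=r_{\omega_{n+1}}\cdot r_{\omega|_n}\xi_k$, consecutive near-lattice-points obey $a_{n+1}\approx r_{\omega_{n+1}}a_n$ with $r_{\omega_{n+1}}\in\{r_1,\dots,r_m\}$; this is a system of $\asymp\log\xi_k$ approximate equations in $(r_1,\dots,r_m)$, and the Erd\H{o}s--Kahane branching lemma shows that at most $\xi_k^{o(1)}$ integer itineraries $(a_n)_{n\le N_k}$ are compatible with any given $\mathbf{r}$. Hence the set of $\mathbf{r}$ admitting such a sequence $\xi_k$ is covered, at scale $\xi_k^{-1}$, by $\xi_k^{o(1)}$ cubes; letting $k\to\infty$ yields a set $\mathcal{E}$ of zero upper box-counting dimension, so $\dim_{\rm H}\mathcal{E}=0$. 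Since $G$ and the points $a_n$ were quantified over rather than fixed, the same $\mathcal{E}$ works for every admissible $\mathbf{t}$ and $\mathbf{p}$, giving the uniformity asserted.

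The main obstacle will be precisely the extraction step in the previous paragraph, which has no analogue in the equicontractive case: when the $r_i$ differ there is no infinite product for $\widehat\mu$, and the recursion $\widehat\mu(\xi)=\sum_i p_ie^{\mathrm{i}t_i\xi}\widehat\mu(r_i\xi)$ mixes different scales. Propagating ``$|\widehat\mu(r_{\mathbf{i}}\xi_k)|\approx1$'' termwise down the tree would cost a factor $p_{\min}^{-n}$, which at depth $n\asymp\log\xi_k$ is polynomial in $\xi_k$ and so dominates $\xi_k^{-\epsilon_k}$; one is therefore forced into a statistical (large deviations/variance) argument that locates $r_{\omega|_n}\xi_k$ near $G$ only for $\mathbb{P}$-most $\omega$ and most $n$, while simultaneously tracking the phases. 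Calibrating this so that $\delta_k\to0$ while the number of compatible integer itineraries stays subpolynomial in $\xi_k$ is the technical heart of the argument; the remaining branching count is the classical Erd\H{o}s--Kahane one.
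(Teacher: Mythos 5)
This statement is not one of the paper's own results: it is quoted verbatim (as Theorem~1.3) from Solomyak's paper \cite{solomyak2021fourier}, and the present paper gives no proof of it, so there is no in-paper argument to compare your sketch against. I can only assess your outline on its own terms, measured against the general shape of Solomyak's proof.

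Your framing is right at both ends -- the smoothing bound $|g(\xi)|\le 1-c\,\mathrm{dist}(\xi,G)^2$ near the lattice $G$, and the Erd\H{o}s--Kahane covering count showing that each approximate itinerary $(a_n)$ confines $\mathbf{r}$ to a box of side $\xi_k^{-1+o(1)}$, so that subpolynomially many itineraries yield a zero-dimensional exceptional set. But the middle step, which you yourself flag as ``the technical heart,'' is genuinely missing, and the device you propose to fill it does not work. The failure of polynomial decay only gives $|\widehat\mu(\xi_k)|\ge\xi_k^{-\epsilon_k}$ with $\epsilon_k\to0$; since $\xi_k\to\infty$, the quantity $\xi_k^{-\epsilon_k}=e^{-\epsilon_k\log\xi_k}$ need not be close to $1$ -- it can still tend to $0$. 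Consequently the first/second-moment (``near-extremality'') argument you sketch collapses: from $\sum_{\mathbf{i}}p_{\mathbf{i}}|\widehat\mu(r_{\mathbf{i}}\xi_k)|\ge\eta$ with $X:=|\widehat\mu(r_{\mathbf{i}}\xi_k)|\le 1$, a Markov-type estimate only yields $\mathbb{P}(X\ge 1-\delta)\ge(\eta-(1-\delta))/\delta$, which is vacuous unless $\eta>1-\delta$, i.e.\ unless $\eta$ is already close to $1$. So one cannot conclude that ``for $\mathbb{P}$-most $\omega$ and most $n\le N_k$ the iterate $r_{\omega|_n}\xi_k$ lies near $G$,'' and the phase-tracking issue you note on top of this is not just a nuisance but a second independent obstruction. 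Solomyak's actual route around this is a structural decomposition of $\widehat\mu$ that replaces your generic statistical heuristic with a genuine product-type representation before the Erd\H{o}s--Kahane count is invoked; your sketch does not recover that idea, and with the gap as it stands the argument does not go through. Two further small points: the near-$1$ set of $|g|$ is only \emph{contained} in $\frac{2\pi}{t_1-t_2}\mathbb{Z}$ after choosing two indices with distinct fixed points (which the hypothesis guarantees exist but need not be $1$ and $2$); and the claimed independence of $\mathcal{E}$ from $\mathbf{t}$ and $\mathbf{p}$ cannot be dismissed by saying $G$ was ``quantified over,'' since both the lattice spacing and the constant $c$ in the smoothing bound depend on $\mathbf{t}$ and $\mathbf{p}$, and keeping the covering uniform over them requires an explicit compactness/normalization step.
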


\begin{theorem}[Shmerkin-Solomyak {\cite[Lemma~4.3~part~(i)]{shmerkin2016absolute}}]\label{od09}
  Let $\mu$ and $\nu$ be Borel probability measures on $\mathbb{R}$. If $\dim_{\rm H}\mu =1$ and there exist $C>0$ and $\alpha>0$ such that $|\widehat{\nu}(t)|\leq C|t|^{-\alpha}$ for every $t\in\mathbb{R}$, then
  \[
    \mu\ast\nu\ll \mathcal{L}^1,
  \]
  where $\mathcal{L}^1$ denotes the $1$-dimensional Lebesgue measure.
\end{theorem}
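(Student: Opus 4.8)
The approach I would take is Fourier-analytic, exploiting the two hypotheses at complementary frequency ranges: the Hausdorff dimension assumption to control the low-frequency behaviour of (a restriction of) $\mu$, and the polynomial decay to control high frequencies, with $\nu$ entering only through the trivial bound $|\widehat\nu(t)|\le 1$.

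Step 1 is to reduce to a uniformly Frostman measure at the cost of an arbitrarily small error. Since $\dim_{\rm H}\mu=1$ we have $\underline{\dim}_{\rm loc}\mu(x)\ge 1$ for $\mu$-a.e.\ $x$, so for every $s<1$ and every $\theta>0$ there is a compact set $A=A_{s,\theta}$ with $\mu(A^c)<\theta$ whose restricted measure satisfies $\mu|_A(B(x,r))\le C_s r^{s}$ for all $x\in\mathbb R$ and $r>0$. Then $\mu\ast\nu=\mu|_A\ast\nu+\mu|_{A^c}\ast\nu$ with $\|\mu|_{A^c}\ast\nu\|=\mu(A^c)<\theta$, and since a finite measure that decomposes, for every $\theta>0$, as an absolutely continuous measure plus one of total mass $<\theta$ must itself be absolutely continuous (its singular part then has mass $<\theta$ for all $\theta$), it suffices to prove $\mu|_A\ast\nu\ll\mathcal L^1$ for one suitable $s$ close to $1$ and every $\theta>0$.

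Step 2 treats $\mu|_A\ast\nu$ via a Littlewood--Paley decomposition $\mu|_A\ast\nu = S_0(\mu|_A\ast\nu)+\sum_{n\ge 1}\Delta_n(\mu|_A\ast\nu)$, the $n$-th block having Fourier transform supported in $\{2^{n-1}\le|t|\le 2^{n+1}\}$; the low-frequency block has a bounded smooth density and is harmless. For the dyadic blocks, Plancherel together with $|\widehat\nu|\le 1$ and the $s$-Frostman bound (via $\int_{|t|\le R}|\widehat{\mu|_A}|^2\,dt\lesssim R\,(\mu|_A\ast\widetilde{\mu|_A})(B(0,1/R))\lesssim R^{1-s}$) give $\|\Delta_n(\mu|_A\ast\nu)\|_2^2\lesssim 2^{n(1-s)}$; independently, the decay of $\widehat\mu$ gives $|\widehat{\mu\ast\nu}(t)|\le C|t|^{-\alpha}$, which, after reinserting $\widehat{\mu|_A}=\widehat\mu-\widehat{\mu|_{A^c}}$, contributes a decaying bound on the high-frequency part of $\mu|_A\ast\nu$ up to the $\mu|_{A^c}$-error. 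One then converts these $L^2$ block estimates into $L^1$ ones (using the compact support of $\mu\ast\nu$ and the rapid spatial decay of the Littlewood--Paley kernels) and sums over $n$; choosing $s$ close enough to $1$ is what is needed for the series to converge, giving $\mu|_A\ast\nu\ll\mathcal L^1$.

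The hard part is the summation in Step 2. The genuine polynomial decay is available only for $\widehat\mu$, hence for $\widehat{\mu\ast\nu}$, and not for $\widehat{\mu|_A}$, so it must be fed back in through the small-error decomposition and one has to check that the non-decaying residual contribution of $\mu|_{A^c}$ is absorbed before letting $\theta\to 0$. Moreover, in the range $\alpha\le 1/2$ the measure $\mu\ast\nu$ need not lie in $L^2$, so the Littlewood--Paley blocks cannot merely be summed in $L^2$ and an honest $L^1$ (or maximal-function) estimate is required; this is precisely where $\dim_{\rm H}\mu=1$ — which lets the Frostman exponent $s$ be pushed arbitrarily close to $1$, and without which the conclusion is false — is used in an essential way. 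A technically smoother companion statement, ``$\dim_{\rm H}\rho=1$ and $\widehat\sigma$ polynomially decaying $\Rightarrow \rho\ast\sigma\ll\mathcal L^1$'' with the decay on the convolved factor, already admits a clean $L^2$ Littlewood--Paley proof, since there $\|\Delta_n(\rho|_A\ast\sigma)\|_2^2\lesssim 2^{n(1-s)}2^{-2n\beta}$ is summable once $s+2\beta>1$.
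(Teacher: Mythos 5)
This is a cited result, not one the paper proves; the reference is to Lemma 4.3(i) of Shmerkin--Solomyak, so there is no ``paper's proof'' to compare against. But there is a more important issue that your attempt circles around without quite naming: the statement as quoted in the paper contains a typo, and taken literally it is \emph{false}. In Shmerkin--Solomyak the polynomial Fourier decay is assumed for $\widehat{\nu}$, i.e.\ for the \emph{other} factor in the convolution, not for $\widehat{\mu}$. Indeed, this is exactly how the paper uses the lemma in the proof of Proposition~\ref{od57}: the measure $\varrho_a^{(k)}$ supplies the full Hausdorff dimension while the Fourier decay comes from the companion factor $\eta_a^{(k)}$. With the hypotheses both placed on $\mu$, as written, the statement fails already with $\nu=\delta_0$: there exist singular Borel probability measures $\mu$ on $\mathbb{R}$ with $\dim_{\rm H}\mu=1$ and $|\widehat{\mu}(t)|\lesssim |t|^{-\alpha}$ for some $\alpha>0$ (for example Frostman measures on dimension-one Salem sets of Lebesgue measure zero, as produced by Brownian images or the constructions of Kahane and Bluhm), and then $\mu\ast\delta_0=\mu$ is not absolutely continuous.

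Your proof proposal, read as an attempt on the literal statement, therefore cannot be repaired, and the place it breaks is exactly where you flag the difficulty: in Step 2 you want to transfer the decay of $\widehat{\mu}$ to $\widehat{\mu|_A}$ via $\widehat{\mu|_A}=\widehat{\mu}-\widehat{\mu|_{A^c}}$, but $\widehat{\mu|_{A^c}}$ has no decay whatsoever and is only bounded by $\theta=\mu(A^c)$. Consequently your high-frequency control on $\widehat{\mu|_A\ast\nu}$ degenerates to the constant $\theta$ once $|t|\gtrsim\theta^{-1/\alpha}$, and neither the $L^2$ block sums $\sum_n\min\bigl(2^{n(1-s)},2^n\theta^2\bigr)^{1/2}$ nor any $L^1$ variant can be made to converge uniformly as $\theta\to0$; this is not a technical obstacle but a reflection of the statement being false. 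By contrast, the ``technically smoother companion statement'' you mention at the end --- $\dim_{\rm H}\rho=1$ together with $|\widehat{\sigma}(t)|\lesssim|t|^{-\beta}$ implies $\rho\ast\sigma\ll\mathcal{L}^1$ --- is precisely Shmerkin--Solomyak's Lemma 4.3(i), and the argument you sketch for it is the correct one: restrict $\rho$ to an $s$-Frostman piece with $s>1-2\beta$, estimate $\|\Delta_n(\rho|_A\ast\sigma)\|_2^2\lesssim 2^{n(1-s-2\beta)}$ directly (here $\widehat{\sigma}$ decays and $|\widehat{\rho|_A}|$ contributes the Frostman energy bound, no error term enters), sum in $L^2$, and let the restricted mass defect tend to zero. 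So the content of your note is essentially right, but you should identify the ``companion'' version as the actual theorem and point out that the version with decay on $\widehat{\mu}$ is a misquotation.
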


\subsubsection{Self-affine tools}

Let $\mathcal{F}$ be a planar \texttt{self-affine} IFS of the form
\begin{equation}\label{od06}
  \mathcal{F}=\{\mathbf{A}_i\mathbf{x}+\mathbf{t}_i\}_{i=1}^m=\{f_i(x_1,x_2)=(\alpha_ix_1, \beta_ix_2) + (t_{i,1},t_{i,2})\}_{i=1}^m,
\end{equation}
where $\mathbf{t_i}=(t_{i,1},t_{i,2})\in\mathbb{R}^2$ and $\alpha_i,\beta_i>0$ for every $i\in\{1,\dots,m\}$. We assume that $\mathcal{F}$ satisfies the \texttt{Rectangular Open Set Condition} (ROSC). In particular, $\forall i\in\{1,\dots,m\}: f_i\left([0,1]^2\right)\subset [0,1]^2$ and
\[
  i,j\in\{1,\dots,m\}, i\neq j: f_i\left((0,1)^2\right)\cap f_j\left((0,1)^2\right)=\emptyset.
\]
We write $\Lambda$ for the attractor of $\mathcal{F}$. To make sure that $\Lambda$ is not a self-similar set, we further assume that $\alpha_i\neq\beta_i$ for some $i\in\{1,\dots,m\}$. 

The most natural guess for the dimension of a self-affine set is its affinity dimension.
\begin{definition}\label{od73}
  We define the pressure function
  \begin{equation*}
    P_{\mathcal{F}}(s)=\begin{cases}
      \max\big\{ \sum_{i=1}^m |\alpha_i|^s, \sum_{i=1}^m |\beta_i|^s\big\} \mbox{, if } 0\leq s<1 \\
      \max\big\{ \sum_{i=1}^m |\alpha_i||\beta_i|^{s-1}, \sum_{i=1}^m |\beta_i||\alpha_i|^{s-1}\big\} \mbox{, if } {1\leq s<2} \\
      \sum_{i=1}^m \left(|\alpha_i||\beta_i|\right)^{s/2} \mbox{, if } 2\leq s.
    \end{cases}
  \end{equation*}
  The \texttt{affinity dimension} of $\mathcal{F}$ is the unique $s_0$ satisfying
  \[P_{\mathcal{F}}(s_0)=1,\]
  which we denote by $\dim_{\rm Aff}\mathcal{F}$.
\end{definition}
The affinity dimension is also the natural upper bound on the Hausdorff dimension of the attractor, in particular
\[
  \dim_{\rm H}\Lambda \leq\overline{\dim}_{\rm B}\Lambda\leq \dim_{\rm Aff}\mathcal{F},
\]
see \cite{FalconerMiao}.

Let $\Sigma=\{1,\dots,m\}^{\mathbb{N}}$ be the symbolic space and $\Pi:\Sigma\to [0,1]^2$ be the natural projection.
Consider an invariant and ergodic measure $\mu$ on $\left(\Sigma,\sigma\right)$. Then, its push-forward measure $\nu:=\Pi_{\ast}\mu$ is supported on $\Lambda$. Using the common notations, we write $h_\nu$ and $0<\chi_1(\nu)<\chi_2(\nu)$ for the entropy and Lyapunov exponents of $\nu$, respectively.

Since the mappings in $\mathcal{F}$ are defined by diagonal matrices, it only has two directions of contraction: one parallel to the ${\sf x}$-axis, and one parallel to the ${\sf y}$-axis. Without loss of generality we assume that the vertical contraction is weaker. That is $0<\chi_y(\nu):=\chi_1(\nu)<\chi_2(\nu)=:\chi_x(\nu)$.

We write $\proj_{\sf y}:\mathbb{R}^2\to\mathbb{R}$ for the projection onto the ${\sf y}$-axis and $\Lambda_{y_0}$ for the horizontal slice of $\Lambda$ that is projected to $y_0$
\[
  \forall y_0\in [0,1]: \Lambda_{y_0}=\proj^{-1}_y (y_0) \cap \Lambda.
\]
According to Rokhlin's theorem \cite[Theorem~9.4.11]{barany2023self}, one can disintegrate the measure $\nu$ with respect to the partition of the horizontal slices of $\Lambda$. More precisely, for $(\proj_{\sf y})_\ast \nu$-almost every $y\in[0,1]$ there exists a Borel probability measure, called conditional measure $\nu_{y}^{\proj_{\sf y}^{-1}}$, supported on $\Lambda_y$, which is uniquely defined up to a zero measure set such that for every Borel set $A\subset[0,1]^2$, $y\mapsto \nu_{y}^{\proj_{\sf y}^{-1}}(A)$ is measurable and 
\[
\nu(A)=\int \nu_{y}^{\proj_{\sf y}^{-1}}(A)d(\proj_{\sf y})_\ast \nu(y).
\]

Feng and Hu \cite{feng2009dimension} proved, that the Hausdorff dimension of $\nu$ can be computed with the help of its projection to the weak contracting direction $(\proj_{\sf y})_\ast \nu$. The theorem below is the corresponding version of \cite[Theorem~11.1.2, Theorem~11.2.1]{barany2023self}.

\begin{theorem}[Feng-Hu]\label{od15}
    Let $\nu$ be the measure defined above, and let $\nu_{y_0}^{\proj_{\sf y}^{-1}}$ denote the conditional measure of $\nu$ with respect to $\Lambda_{y_0}$. Then
    \begin{enumerate}
      \item[{\bf (1)}] for $(\proj_{\sf y})_\ast\nu$-almost every $y_0$ $$\dim_{\rm H}\nu_{y_0}^{\proj_{\sf y}^{-1}}+\dim_{\rm H}(\proj_{\sf y})_*\nu=\dim_{\rm H}\nu,$$
      \item[{\bf (2)}] $\dim_{\rm H}\nu=\frac{h_{\nu}}{\chi_2(\nu)}+\left(1-\frac{\chi_1(\nu)}{\chi_2(\nu)}\right)\dim_{\rm H}(\proj_{\sf y})_\ast\nu$.
    \end{enumerate}
\end{theorem}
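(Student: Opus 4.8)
The plan is to run a Ledrappier--Young type argument adapted to the diagonal self-affine picture, where the two principal contraction directions of the $f_i$ split $\nu$ into an $x$-part and a $y$-part. For $\omega\in\Sigma$ write $\alpha_{\omega|n}=\prod_{j\le n}\alpha_{\omega_j}$, $\beta_{\omega|n}=\prod_{j\le n}\beta_{\omega_j}$, so the level-$n$ cylinder $f_{\omega|n}([0,1]^2)$ is an $\alpha_{\omega|n}\times\beta_{\omega|n}$ rectangle; by the ROSC these rectangles have uniformly bounded overlap, and Birkhoff's ergodic theorem gives $\frac1n\log\alpha_{\omega|n}\to-\chi_2$, $\frac1n\log\beta_{\omega|n}\to-\chi_1$ for $\nu$-a.e.\ $x=\Pi(\omega)$ (recall $\chi_1=\chi_y<\chi_x=\chi_2$), so deep cylinders are tall and thin. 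To each such $x$ and each small $r>0$ I would attach an \emph{approximate square}: pick $n=n(\omega,r)$ with $\alpha_{\omega|n}\asymp r$, so the level-$n$ cylinder through $x$ resolves the $x$-coordinate to scale $r$ but still spreads over height $\beta_{\omega|n}\asymp r^{\chi_1/\chi_2}\gg r$, and set
\[
  Q(x,r):=\{\Pi(\tau):\tau|n=\omega|n,\ |\proj_y\Pi(\tau)-\proj_y x|\le r\},
\]
a set of diameter $\asymp r$ through $x$; for $\nu$-a.e.\ $x$ bounded overlap yields $\nu(B(x,r))\asymp\nu(Q(x,r))$. Disintegrating $\mu$ over the first $n$ symbols and rescaling the $y$-coordinate,
\[
  \nu(Q(x,r))=\mu([\omega|n])\cdot\mu_{\omega|n}\bigl(\{\tau':\proj_y\Pi(\tau')\in B(\proj_y\Pi(\sigma^n\omega),\rho)\}\bigr),\qquad\rho:=r/\beta_{\omega|n}\asymp r^{1-\chi_1/\chi_2},
\]
where $\mu_{\omega|n}$ is the normalised shift of $\mu|_{[\omega|n]}$, and the second factor is, up to a controlled error, the $(\proj_y)_*\nu$-mass of a ball of radius $\rho$ about a typical point.

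Now I would take logarithms and divide by $\log r$. Shannon--McMillan--Breiman gives $\frac1n\log\mu([\omega|n])\to-h_\nu$, and $n\sim\log(1/r)/\chi_2$, so the first factor contributes $\frac{h_\nu}{\chi_2}$; using exact dimensionality of the one-dimensional measure $(\proj_y)_*\nu$, the second contributes $\dim_{\rm H}(\proj_y)_*\nu\cdot\frac{\log\rho}{\log r}\to\bigl(1-\frac{\chi_1}{\chi_2}\bigr)\dim_{\rm H}(\proj_y)_*\nu$. Thus for $\nu$-a.e.\ $x$
\[
  \lim_{r\to0}\frac{\log\nu(B(x,r))}{\log r}=\frac{h_\nu}{\chi_2}+\Bigl(1-\frac{\chi_1}{\chi_2}\Bigr)\dim_{\rm H}(\proj_y)_*\nu=:s,
\]
i.e.\ $\nu$ is exact dimensional with $\dim_{\rm H}\nu=s$, which is \textbf{(2)}. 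For \textbf{(1)} I would separate, inside a Rokhlin disintegration $\nu=\int\nu_{y_0}^{\proj_y^{-1}}\,d(\proj_y)_*\nu(y_0)$, the $y$-contribution (the second factor above, responsible for $(\proj_y)_*\nu$) from the rest: running the same approximate-square count restricted to a slice shows $\nu_{y_0}^{\proj_y^{-1}}$ is exact dimensional for $(\proj_y)_*\nu$-a.e.\ $y_0$ with $\dim_{\rm H}\nu_{y_0}^{\proj_y^{-1}}=\frac{h_\nu-\chi_1\dim_{\rm H}(\proj_y)_*\nu}{\chi_2}$, and adding $\dim_{\rm H}(\proj_y)_*\nu$ recovers $s$, which is \textbf{(1)}.

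The main obstacle is legitimising the two comparisons hidden in ``$\nu(B(x,r))\asymp\nu(Q(x,r))$'' and ``up to a controlled error''. Since deep rectangles are tall and thin, $B(x,r)$ can meet many of them, so one must prove, by a covering/maximal-function argument, that for $\nu$-a.e.\ $x$ only a number of approximate squares subexponential in $\log(1/r)$ is ever relevant and that their $\nu$-masses do not blow up relative to one another; and because $\mu$ is only assumed ergodic, not Bernoulli, replacing $\mu_{\omega|n}$ by $\mu$ in the $y$-estimate requires a reverse-martingale argument controlling $\mu(\,\cdot\mid[\omega|n])$ as $n\to\infty$. These are the technical heart of Feng--Hu's proof; the remainder is the ergodic-theoretic bookkeeping above.
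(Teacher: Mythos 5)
The statement you are proving is cited in the paper as Feng--Hu \cite[Theorem~2.11]{feng2009dimension}: the authors quote it and apply it but give no proof of their own, so there is no internal argument to compare yours against. Your sketch is nonetheless a faithful outline of the Ledrappier--Young mechanism that Feng and Hu actually run: approximate squares $Q(x,r)$ with $x$-side $\asymp r$ and $y$-side $\asymp r^{\chi_1/\chi_2}$, the factorisation $\nu(Q(x,r))=\mu([\omega|_n])\cdot\mu_{\omega|_n}(\cdot)$, Shannon--McMillan--Breiman for the cylinder mass, and the projected-measure ball contributing the $\bigl(1-\chi_1/\chi_2\bigr)\dim_{\rm H}(\proj_y)_*\nu$ term. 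Your formula for the fibre dimension, $\frac{h_\nu-\chi_1\dim_{\rm H}(\proj_y)_*\nu}{\chi_2}$, is also the one that drops out of (1) and (2) jointly.

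There are, however, two genuine gaps worth naming. First, you invoke ``exact dimensionality of the one-dimensional measure $(\proj_y)_*\nu$'' as a known input, but for an ergodic (not Bernoulli) $\mu$ and a self-similar IFS with overlaps this is itself a nontrivial claim; in Feng--Hu it is proved as a preliminary step (the one-dimensional case of their machinery) rather than assumed, so a complete argument needs to derive it, not merely cite it. Second, part (1) as you describe it (``running the same approximate-square count restricted to a slice'') compresses what is really the hardest half of the theorem: you must show that the conditional measures $\nu_{y_0}^{\proj_y^{-1}}$ are themselves exact dimensional and that the local entropy of $\nu$ decomposes as the sum of a transversal (projection) local entropy and a fibre local entropy, $\nu$-a.e. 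This is where the conditional-entropy and reverse-martingale machinery lives, and without it the passage from (2) to (1) is not justified; (1) is \emph{not} a formal consequence of (2) plus exact dimensionality of the projection, since one still has to know that almost every fibre measure has the ``right'' dimension rather than merely that the average works out. The two technical obstacles you flag at the end (ball-versus-approximate-square comparison, and controlling $\mu(\,\cdot\mid[\omega|_n])$ as $n\to\infty$) are correctly identified and are indeed the core of the proof; your sketch is an honest roadmap to it but stops short of the part that is actually hard.
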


The following theorem shows a nice connection between the Assouad dimension of the attractor of a self-affine IFS and its slices. We write $\proj_{\sf y}$ for the orthogonal projection to the ${\sf y}$-axis.

\begin{theorem}[Antilla-B\'{a}r\'{a}ny-K\"{a}enm\"{a}ki {\cite[Proposition~3.1]{anttila2024slices}}]\label{od18}
  Let $\mathcal{F}$ be a self-affine IFS having the form \eqref{od06} with attractor $\Lambda$. Assume that $\mathcal{F}$ satisfies the ROSC. Then
    \[
      \dim_{\rm A}\Lambda \leq \max\{
        \dim_{\rm H}\Lambda, 1+\sup_{x\in\mathbb{R}} \dim_{\rm H}\Lambda_x,
      \}
    \]
    where $\Lambda_x$ is the corresponding horizontal slice of $\Lambda$.
\end{theorem}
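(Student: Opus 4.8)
\emph{Proof proposal.} The plan is to use Proposition~\ref{od08}, which identifies $\dim_{\rm A}\Lambda$ with the supremum of $\dim_{\rm H}F$ over all weak tangents $F$ to $\Lambda$; so it suffices to show that every weak tangent satisfies $\dim_{\rm H}F\le\max\{\dim_{\rm H}\Lambda,\,1+\sup_x\dim_{\rm H}\Lambda_x\}$. Fix such an $F$, realised by homotheties $T_k(\mathbf z)=\lambda_k(\mathbf z-\mathbf c_k)$ with $d_{\mathcal{H}}(F,T_k(\Lambda)\cap B(0,1))\to0$. If $\lambda_k$ stays bounded along a subsequence, then along it $T_k(\Lambda)$ subconverges to an affine image of $\Lambda$, whence $\dim_{\rm H}F\le\dim_{\rm H}\Lambda$; so I would assume $\lambda_k\to\infty$ and, by the standard reduction (replacing $\mathbf c_k$ by a nearby point of $\Lambda$ and translating), that $\mathbf c_k\in\Lambda$, say $\mathbf c_k=\Pi(\mathbf i^{(k)})\to\mathbf c^*\in\Lambda$.

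The core of the argument will be to cut $T_k(\Lambda)\cap B(0,1)$ into boundedly many pieces indexed by cylinders of the critical \emph{horizontal} scale $\lambda_k^{-1}$, and to analyse each piece. I would take the stopping antichain $\Xi_k=\{\mathbf j\in\Sigma_*:\alpha_{\mathbf j}<\lambda_k^{-1}\le\alpha_{\mathbf j^-}\}$, where $\alpha_{\mathbf j}=\prod_l\alpha_{j_l}$ and $\mathbf j^-$ is $\mathbf j$ with the last symbol deleted; then $\{f_{\mathbf j}([0,1]^2)\}_{\mathbf j\in\Xi_k}$ covers $\Lambda$ and every such rectangle has horizontal side comparable to $\lambda_k^{-1}$. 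Using the ROSC I would argue that only a number $N_0=N_0(\mathcal F)$ of these rectangles meet $T_k^{-1}(B(0,2))=B(\mathbf c_k,2\lambda_k^{-1})$; passing to a subsequence along which each image $(T_k\circ f_{\mathbf j^{(k,l)}})(\Lambda)\cap B(0,2)$ converges to a set $F^{(l)}$, we get $F\subseteq\bigcup_{l=1}^{N_0}F^{(l)}$ and hence $\dim_{\rm H}F\le\max_l\dim_{\rm H}F^{(l)}$. Each composition $T_k\circ f_{\mathbf j^{(k,l)}}$ is a diagonal affine map whose horizontal scaling $X_k=\lambda_k\alpha_{\mathbf j^{(k,l)}}$ is bounded in $[\min_i\alpha_i,1)$ (this is why the antichain is chosen adapted to the horizontal direction), while its vertical scaling $Y_k=\lambda_k\beta_{\mathbf j^{(k,l)}}$ with $\beta_{\mathbf j}=\prod_l|\beta_{j_l}|$ may stay bounded or tend to infinity.

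Now I would run a dichotomy on each piece. If $Y_k$ stays bounded, the maps $T_k\circ f_{\mathbf j^{(k,l)}}$ are uniformly bi-Lipschitz — or degenerate to horizontal projections when $Y_k\to0$ — so $F^{(l)}$ lies in a bi-Lipschitz (resp.\ Lipschitz) image of a subset of $\Lambda$, giving $\dim_{\rm H}F^{(l)}\le\dim_{\rm H}\Lambda$. If instead $Y_k\to\infty$, then $(T_k\circ f_{\mathbf j^{(k,l)}})(\Lambda)\cap B(0,2)$ is the image, under a map that keeps the horizontal direction bounded and stretches the vertical one, of the part of $\Lambda$ lying in a horizontal strip of thickness $\asymp Y_k^{-1}\to0$ about some height $\eta_k$; if the piece is non-empty there is a point of $\Lambda$ in that strip, so $\eta_k\in\proj_y\Lambda$, and after a further subsequence $\eta_k\to\eta^*\in\proj_y\Lambda$ and $X_k\to s^*\in(0,1]$. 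The crucial input will be upper semicontinuity of the slices in the height: by compactness of $\Lambda$, for every $\varepsilon>0$ the horizontal footprint $\{x_1:(x_1,x_2)\in\Lambda\text{ for some }x_2\text{ with }|x_2-\eta_k|\le cY_k^{-1}\}$ is contained in the $\varepsilon$-neighbourhood of $\proj_x\Lambda_{\eta^*}$ once $k$ is large. Consequently $\proj_x F^{(l)}$ sits inside an affine copy $s^*\proj_x\Lambda_{\eta^*}+b$ of the horizontal slice at height $\eta^*$, while $\proj_y F^{(l)}\subseteq[-2,2]$, so $F^{(l)}\subseteq(s^*\proj_x\Lambda_{\eta^*}+b)\times[-2,2]$ and, the second factor being an interval,
\[
\dim_{\rm H}F^{(l)}\le\dim_{\rm H}\bigl(\proj_x\Lambda_{\eta^*}\bigr)+1=\dim_{\rm H}\Lambda_{\eta^*}+1\le1+\sup_x\dim_{\rm H}\Lambda_x.
\]
Combining the two cases over $l=1,\dots,N_0$ gives the bound for $F$, and the supremum over weak tangents finishes the proof.

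The step I expect to be the main obstacle is the cylinder-counting: showing uniformly in $k$ that only boundedly many critical-scale cylinders meet the critical ball, and that their images genuinely reassemble $F$. This is easy when the horizontal contractions $\alpha_i$ coincide — precisely the situation of the Okamoto IFS, where the horizontal cylinders are just triadic intervals — but the fully general non-uniform case needs a more careful combinatorial lemma on the overlaps of the horizontal projections of cylinders under the ROSC. A secondary point is to handle the degenerate sub-cases of the dichotomy (vertical scaling tending to $0$, or to a finite positive limit) so that they remain dominated by $\dim_{\rm H}\Lambda$.
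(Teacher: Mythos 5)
The paper does not prove this statement; it is imported verbatim as \cite[Proposition~3.1]{anttila2023slices}, so there is no in-text proof to compare yours against. That said, your blind attempt follows what is essentially the expected weak-tangent argument for this kind of bound: reduce to weak tangents via Proposition~\ref{od08}, cover $\Lambda$ by a stopping antichain at horizontal scale $\lambda_k^{-1}$, split the tangent into boundedly many limit pieces, and run a dichotomy on the induced vertical scaling of each piece.

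The one point worth correcting is your diagnosis of when the critical-scale count $N_0$ is bounded. Equal horizontal ratios $\alpha_i$ are \emph{not} by themselves enough: ROSC with all $\alpha_i=\alpha$ but some $\beta_i<\alpha$ permits arbitrarily many level-$n$ cylinders, each of width $\alpha^n$ but height $\ll\alpha^n$, to sit stacked above the same base interval, and then unboundedly many of them meet a ball of radius $\approx\lambda_k^{-1}\approx\alpha^n$. What the counting really needs is that the stopped rectangles have \emph{both} side lengths $\gtrsim\lambda_k^{-1}$, i.e.\ $\beta_{\mathbf j}\gtrsim\alpha_{\mathbf j}$ along the antichain; this is precisely the standing assumption that the vertical contraction is the weaker one ($\beta_i\ge\alpha_i$ for all $i$), and it is also why the statement uses horizontal slices. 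Under that hypothesis your disjoint-interiors area argument gives $N_0\lesssim(\min_i\alpha_i)^{-2}$, and it also rules out your degenerate sub-case, since then $Y_k=\lambda_k\beta_{\mathbf j^{(k,l)}}\ge X_k\ge\min_i\alpha_i>0$. For the Okamoto IFS with $a<2/3$ one actually has $\beta_2=2a-1<\alpha_2=1/3$, yet the count is still bounded --- but the reason is the function-graph structure (the $\proj_x$-images of level-$n$ cylinders tile $[0,1]$ with multiplicity one), not the equality of the $\alpha_i$. The remaining steps --- bounded $Y_k$ giving a bi-Lipschitz image of $\Lambda$, and the upper-semicontinuity-of-slices argument giving $F^{(l)}\subseteq(s^*\proj_x\Lambda_{\eta^*}+b)\times[-2,2]$ when $Y_k\to\infty$, with $\bigcap_{\varepsilon>0}(\proj_x\Lambda_{\eta^*})^{(\varepsilon)}=\proj_x\Lambda_{\eta^*}$ by compactness --- are sound as sketched.
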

We note that in \cite{anttila2024slices}, this theorem is stated using a much milder separation condition the authors call a weak bounded neighborhood condition, which is always implied by the ROSC. In particular, any ball of radius $r>0$ can be intersected by uniformly finitely many disjoint open squares with side length $r$, and thus, it can be intersected by uniformly finitely many disjoint open rectangles having smallest side length $r$.


\section{Proving strong exponential separation}\label{od26}

Let $\mathcal{S}_a=\{S_1,S_2,S_3\}$ be the self-similar IFS that describes the projection of the graph of Okamoto's function to the ${\sf y}$-axis, where
\begin{equation}\label{od31}
  S_1(x)=ax, S_2(x)=(1-2a)x+a, S_3(x)=ax+1-a.
\end{equation}
In order to simplify the calculations, throughout this section we will work with the following system:
\begin{equation}\label{od99}
    \Phi_b=\left\{
        \phi_1(x)=\frac{1+b}{2}x-1,\: \phi_2(x)=(-b)x,\: \phi_3(x)=\frac{1+b}{2}x+1
    \right\}
\end{equation}
defined for parameters $b\in\left(0,1\right)$.

The relation between the system in \eqref{od31} and \eqref{od99} is as follows: we choose the parameter $b=2a-1$. Then for every $i\in\{1,2,3\},\: S_i=f\circ\phi_i\circ f^{-1}$ where $f(x)=\frac{1-a}{2}x+\frac{1}{2}= \frac{1-b}{4}x+\frac{1}{2}$, i.e. $\mathcal{S}_a$ is conjugated to $\Phi_b$. Thanks to this property, $S_{\iiv}=f\circ \phi_{\iiv}\circ f^{-1}$ for every $\iiv\in\bigcup_{n=0}^\infty\{1,2,3\}^{n}$. 
Hence, $\Phi_b$ is strongly exponentially separated for a given $b$ if and only if $S_a$ is strongly exponentially separated for $a=(1+b)/2$. Furthermore, due to the linear correspondence between the parameters, the Hausdorff dimension of the exceptional set of parameters $b$ for which the SESC does not hold for $\Phi_b$ equals to the Hausdorff dimension of the exceptional set of parameters $a$. Also, the attractor of $\mathcal{S}_a$ is the image of the attractor of $\Phi_b$ by the map $f$.

We write $\Lambda_b$ for the attractor of $\Phi_b$, then $\Lambda_b=\left[\frac{-2}{1-b},\frac{2}{1-b}\right]$. Moreover, let $\Pi^{\Phi}_b:\Sigma\to\Lambda_b$ denote the natural projection of the IFS $\Phi_b$
\begin{equation}\label{od98}
    \forall \iiv=i_1i_2\dots\in\Sigma : \Pi^{\Phi}_b(\iiv):=\lim_{n\to\infty} \phi_{i_1}\circ\cdots\circ\phi_{i_n}(0).
\end{equation}
With a slight abuse of notation, for a finite word $\iiv=(i_1,\ldots,i_n)\in\Sigma_*$ we will write $\Pi^{\Phi}_b(\iiv)$ for the finite composition $\phi_{i_1}\circ\cdots\circ\phi_{i_n}(0)$.

Let us also note that the region of parameters which we are interested in to study Okamoto's function is $b\in (0,1)$. However, the functions in the IFS $\Phi_b$ are strongly contractive for every $b\in (-1,1)$, and in many situations, allowing $b$ {to take on} negative values or zero is more convenient. For this reason, we will study the natural projection $\Pi^{\Phi}_b$ on the bigger parameter domain $b\in (-\varrho, \varrho)$ for an arbitrary $\varrho\in (0,1)$.

To obtain our main results, we need to show first that for most parameters $b$ the IFS $\Phi_b$ satisfies the strong exponential separation condition (SESC).
\begin{theorem}\label{od97}
    There exists a set $\mathcal{E}\subset(0,1)$ with $\dim_{\rm H}\mathcal{E}=0$ such that for all $b\in (0,1)\setminus\mathcal{E}$
    \begin{equation}\label{od89}
        \exists \varepsilon>0, \exists N\geq 1, \forall n\geq N:
        \min_{\iiv\neq\jjv\in\Sigma_n} \big\vert \Pi^{\Phi}_b(\iiv)-\Pi^{\Phi}_b(\jjv) \big\vert>\varepsilon^n.
    \end{equation}
\end{theorem}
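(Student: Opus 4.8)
The plan is to deduce Theorem~\ref{od97} from Solomyak's transversality-type result, Theorem~\ref{od10}, combined with the observation that the exponential separation condition is controlled by the real-analytic dependence of $\Pi^{\Phi}_b$ on the parameter $b$. Concretely, I would first fix a compact subinterval $[b_0,b_1]\subset(0,1)$ (and then exhaust $(0,1)$ by countably many such, noting that a countable union of zero-dimensional sets is zero-dimensional). On such an interval the contraction ratios are bounded away from $0$ and $1$, so for $\iiv\ne\jjv\in\Sigma_n$ the difference $\Delta_{\iiv,\jjv}(b):=\Pi^{\Phi}_b(\iiv)-\Pi^{\Phi}_b(\jjv)$ is a real-analytic function of $b$ that does not vanish identically unless $\iiv$ and $\jjv$ agree in a way that forces $\Pi^{\Phi}_b(\iiv)\equiv\Pi^{\Phi}_b(\jjv)$ on the whole interval; a short bookkeeping argument with the first symbol where $\iiv$ and $\jjv$ differ (using that $\phi_2$ has negative slope while $\phi_1,\phi_3$ have positive slope, and that the translation parts are $\pm1$) shows this happens only in trivial cases that can be excised or handled directly.

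The main engine is the following standard reduction. For each $b$, let
\[
  E_b=\Big\{\,b: \forall\varepsilon>0\ \exists n\ \exists \iiv\ne\jjv\in\Sigma_n\ \text{with}\ |\Delta_{\iiv,\jjv}(b)|\le\varepsilon^n\,\Big\}^{c}\ \text{fails};
\]
more usefully, I would follow the now-classical argument (Hochman, Solomyak) showing that the set of parameters for which SESC fails is contained in the set of parameters for which the ``Garsia-type'' separation at exponential scale fails, and that this latter set has zero Hausdorff dimension whenever one has a suitable power Fourier decay / transversality input. The cleanest route here is to invoke Theorem~\ref{od10} directly: after the conjugacy, $\Phi_b$ is a self-similar IFS on $\mathbb{R}$ with contraction vector $\mathbf{r}(b)=\big(\tfrac{1+b}{2},-b,\tfrac{1+b}{2}\big)$ and translations $(-1,0,1)$ whose fixed points are not all equal, so for all $\mathbf{r}$ outside a zero-dimensional exceptional set the self-similar measure has power Fourier decay. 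One then uses the known implication (this is essentially how exponential separation is extracted in Hochman's and Solomyak--Shmerkin's work) that power Fourier decay of the natural self-similar measure forces exponential separation of the IFS; alternatively, and more robustly, I would reprove the zero-dimensional exceptional set directly by the parametric transversality method: if $\Delta_{\iiv,\jjv}$ were small at a point $b$, its analyticity and a lower bound on the number of its zeros (bounded in terms of $n$) lets one cover the bad set at scale $\varepsilon^n$ by few intervals, and summing over the $\le 9^n$ pairs gives, for $\varepsilon$ close enough to the minimal contraction, a covering sum that tends to $0$ in every positive exponent — hence zero Hausdorff dimension.

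So the key steps, in order, are: (i) pass to a compact parameter interval and set up $\Delta_{\iiv,\jjv}(b)$ as a real-analytic function, recording the uniform bounds $|\Delta_{\iiv,\jjv}(b)|\le C\rho^{-n}$-type estimates and, crucially, a uniform upper bound on $|\Delta_{\iiv,\jjv}'(b)|$ and on the number of sign changes of $\Delta_{\iiv,\jjv}$; (ii) identify and discard the (measure-zero, in fact finite-codimension) families of pairs $(\iiv,\jjv)$ for which $\Delta_{\iiv,\jjv}\equiv0$, showing these correspond to genuine coincidences $\Pi^{\Phi}_b(\iiv)=\Pi^{\Phi}_b(\jjv)$ that are harmless for \eqref{od89}; (iii) for the remaining pairs, estimate the Lebesgue measure (indeed the covering number at scale $\varepsilon^n$) of $\{b:|\Delta_{\iiv,\jjv}(b)|\le\varepsilon^n\}$ using the transversality/analyticity input from (i); (iv) take a union bound over the at most $3^n\cdot 3^n$ pairs and over $n\ge N$, and choose $\varepsilon$ so small that the resulting series has finite $t$-dimensional Hausdorff content for every $t>0$; (v) intersect over $N$ and take the countable union over the exhausting intervals to get the zero-dimensional exceptional set $\mathcal{E}$.

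The main obstacle I expect is step (ii)–(iii): establishing the \emph{quantitative transversality} needed, i.e.\ that for the surviving pairs $\Delta_{\iiv,\jjv}$ cannot be uniformly small on a set of parameters that is too large, without this reducing to checking a non-degeneracy condition pair-by-pair. The sign-pattern subtlety introduced by $\phi_2$ having a \emph{negative} contraction ratio $-b$ (so that $\Pi^{\Phi}_b(\iiv)$ depends on $b$ through both $\tfrac{1+b}{2}$ and $-b$, and the composition can flip orientation many times) means that the clean power-series arguments available for homogeneous positive-ratio systems need care; I would handle this by working with the derivative in $b$ and exploiting that the leading-order term, determined by the first disagreement index and the $\pm1$ translations, has a definite size bounded below on the compact parameter interval, so that Solomyak's general transversality framework (Theorem~\ref{od10}) applies once we record that the fixed points $\tfrac{-1}{1-(1+b)/2}=\tfrac{-2}{1-b}$, $0$, $\tfrac{2}{1-b}$ are pairwise distinct for all $b\in(0,1)$.
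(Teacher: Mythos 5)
Your high-level strategy (analyticity of $\Pi^{\Phi}_b$ in $b$, a Hochman-style compactness argument for uniform non-degeneracy, a covering estimate and a union bound over $n$, then intersecting over thresholds) is the right framework and is indeed what the paper does. But there is a genuine gap in step (ii), and it is exactly where the paper spends most of its effort.

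You propose to work directly with $\Delta_{\iiv,\jjv}(b)=\Pi^{\Phi}_b(\iiv)-\Pi^{\Phi}_b(\jjv)$ and ``excise or handle directly'' the pairs for which this vanishes identically, on the grounds that they are trivial. This does not work. The coincidences are not confined to a harmless finite-codimension family: e.g.\ for $\iiv=1\overline{3}$ and $\jjv=2\overline{1}$ one has $\Pi^{\Phi}_b(\iiv)=\phi_1\left(\tfrac{2}{1-b}\right)=\tfrac{2b}{1-b}=\phi_2\left(\tfrac{-2}{1-b}\right)=\Pi^{\Phi}_b(\jjv)$ for every $b$, so $\Delta_{\iiv,\jjv}\equiv 0$; and such pairs are accumulation points of finite-word pairs in the compact space $\Sigma\times\Sigma$. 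Consequently the compactness step you rely on (a uniform $c>0$ and $n$ so that some derivative $\left|\tfrac{d^p}{db^p}\Delta_{\iiv,\jjv}(b)\right|>c$) fails: any such uniform bound would be contradicted along sequences of finite pairs converging to these degenerate infinite pairs. The paper's solution is precisely not to work with $\Delta_{\iiv,\jjv}$ alone, but to introduce the three auxiliary analytic families $F^1_{\iiv,\jjv}, F^2_{\iiv,\jjv}, F^3_{\iiv,\jjv}$ restricted to domains $A_1, A_2, A_3$ on which they are provably not identically zero (Lemma~\ref{od95}); the compactness argument (Lemma~\ref{od94}) is then run on each $(F^k, A_k)$ separately, and in the final proof the finite-word difference $\Pi^{\Phi}_b(\iiv|_n)-\Pi^{\Phi}_b(\jjv|_n)$ is peeled down to the first disagreement and then, through the chain of $(1,3)$-pairs that follows, re-expressed as a bounded product times one of the $F^k$ evaluated on a pair in $A_k$ (estimates \eqref{od86}, \eqref{od85}). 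This decomposition is the crux of the proof and is missing from your proposal.

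Your proposed shortcut via Theorem~\ref{od10} also does not hold up: there is no implication from power Fourier decay of a self-similar measure to (strong) exponential separation of the generating IFS, and this is not how separation is obtained in Hochman's or Shmerkin--Solomyak's work; the logical direction runs the other way (separation feeds into dimension formulas, and Fourier decay is used for absolute continuity of convolutions, as in Section~\ref{od28}). A smaller point: in your step (iii) you want an \emph{upper} bound, not a lower bound, on the number of near-zeros of $\Delta_{\iiv,\jjv}$ at scale $\varepsilon^n$, and the conclusion $\dim_{\rm H}\mathcal{E}=0$ comes from intersecting over $\delta\to 0$ as in the proof of Proposition~\ref{od91}, not from letting $\varepsilon$ tend to the minimal contraction.
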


Let us note that Theorem~\ref{od97} is not evident. Hochman \cite[Theorem~1.8]{hochman2014self} showed that for analytically parametrized non-degenerate self-similar iterated function systems the statement of Theorem \ref{od97} holds. Although the parametrization of $\Phi_b$ is clearly analytic, Hochman's result can not be applied directly, as it is a so-called degenerate IFS. That is, there are some words $\iiv,\jjv\in\Sigma, \iiv\neq\jjv$ such that $\Pi^{\Phi}_b(\iiv)=\Pi^{\Phi}_b(\jjv)$ for every parameter $b\in(0,1)$. For example,
\[
  \Pi_b^\Phi(133\dots)=\Pi_b^\Phi(211\dots), \quad \forall b\in(0,1).
\] 
We will modify Hochman's method to verify our claim.

\begin{lemma}\label{od96}
    For all $\iiv\in\Sigma$, the projection $\Pi^{\Phi}_b(\iiv)$ is an analytic function of $b$ on $(-\varrho,\varrho)$ for any $0<\varrho< 1$.
\end{lemma}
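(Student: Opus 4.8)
The plan is to exhibit $\Pi^{\Phi}_b(\iiv)$ as a series of polynomials in $b$ and then upgrade its locally uniform convergence to genuine analyticity by passing to the complex plane. First I would record the affine form $\phi_i(x)=r_i(b)x+t_i(b)$, where $r_1(b)=r_3(b)=\tfrac{1+b}{2}$, $r_2(b)=-b$, $t_1(b)=-1$, $t_2(b)=0$, $t_3(b)=1$, all polynomial (in fact affine) in $b$. Iterating the maps and letting the word length tend to infinity yields the standard telescoping identity
\begin{equation*}
  \Pi^{\Phi}_b(\iiv)=\sum_{k=1}^{\infty}t_{i_k}(b)\prod_{j=1}^{k-1}r_{i_j}(b),
\end{equation*}
whose partial sums are polynomials in $b$ and which converges whenever $\max_i|r_i(b)|<1$, in particular for every real $b\in(-1,1)$, hence on all of $(-\varrho,\varrho)$.

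The only genuine point to watch is that a uniform limit of real-analytic functions need not be real-analytic, so the argument must be run over $\mathbb{C}$. Fix $b_0\in(-\varrho,\varrho)$. Since $|b_0|<1$ we get $|r_2(b_0)|=|b_0|<1$ and $|r_1(b_0)|=|r_3(b_0)|=\tfrac{1+b_0}{2}\in(0,1)$, so $\max_i|r_i(b_0)|<1$; by continuity of $z\mapsto r_i(z)$ there are a complex disk $D\ni b_0$ and a constant $\rho\in(0,1)$ with $|r_i(z)|\le\rho$ for all $z\in D$ and all $i$. Because $|t_i(z)|\le 1$, on $D$ the $k$-th summand is a polynomial in $z$ of modulus at most $\rho^{k-1}$, so the Weierstrass $M$-test gives uniform convergence of the series on $D$ to a holomorphic function $g_{\iiv}\colon D\to\mathbb{C}$. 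Since $g_{\iiv}$ restricts to $b\mapsto\Pi^{\Phi}_b(\iiv)$ on $D\cap\mathbb{R}$, the latter is real-analytic at $b_0$, and as $b_0$ was arbitrary it is real-analytic on $(-\varrho,\varrho)$.

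I expect no serious obstacle here; the one step requiring (mild) care is securing a complex neighbourhood of each $b_0$ on which all three contraction ratios stay uniformly below $1$ in modulus, which is precisely where the hypothesis $|b|<1$ (equivalently $a=\tfrac{1+b}{2}\in(0,1)$) enters, and it follows at once from continuity. The remaining ingredients — the telescoping identity, the $M$-test estimate, and the Weierstrass theorem that a locally uniform limit of holomorphic functions is holomorphic — are entirely routine.
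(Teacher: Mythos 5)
Your proof is correct and follows essentially the same route as the paper: both pass to the complex plane, observe that the partial compositions $\Pi^{\Phi}_b(\iiv|_n)$ are polynomials in $b$ converging uniformly on complex neighbourhoods, and then invoke a complex-analysis theorem to upgrade the uniform limit to a holomorphic function. The paper phrases the final step via Morera's theorem (integrals of the polynomial partial sums over closed curves vanish, hence so does the integral of the limit); you invoke the Weierstrass convergence theorem directly, which is a packaged form of the same Morera argument. You also write out the explicit telescoping series for $\Pi^{\Phi}_b(\iiv)$ rather than working with the abstract compositions, but that is a cosmetic choice.

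One small point in your favour: by localizing around each $b_0\in(-\varrho,\varrho)$ and choosing a disk $D$ on which the contraction ratios are uniformly bounded by some $\rho<1$, your argument cleanly handles the boundary case $\varrho=1$, where the paper's claim of uniform convergence on all of $B_\varrho(0)$ is technically too strong (the bound $\frac{1+|b|}{2}$ tends to $1$ near the boundary of $B_1(0)$). Since the conclusion only asserts analyticity on the open interval, the paper's result is of course still correct, but your formulation avoids that imprecision.
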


\begin{proof}
    Let $\iiv\in\Sigma$ be an arbitrary element of the symbolic space.
    Since $|b|<\varrho$ and $|\frac{1+b}{2}|<\frac{1+\varrho}{2}$, $\Pi^{\Phi}_b(\iiv|_n)$ converges uniformly for $b\in B_{\varrho}(0)\subset\mathbb{C}$. It follows that for all $\gamma\subset B_{\varrho}(0)$ closed curve in the complex plane
    \begin{equation}\label{op99}
        \lim_{n\to\infty} \int_{\gamma} \Pi^{\Phi}_b(\iiv|_n) \mathrm{d}b =
        \int_{\gamma} \Pi^{\Phi}_b(\iiv) \mathrm{d}b.
    \end{equation}
    Observe that $\Pi^{\Phi}_b(\iiv|_n)$ is just a polynomial in $b$, thus $\int_{\gamma} \Pi^{\Phi}_b(\iiv|_n) \mathrm{d}b=0$. By Morera's theorem, see for example \cite[Theorem~10.17]{Rudin}, and \eqref{op99},
    $\Pi^{\Phi}_b(\iiv)$ is an analytic function of $b$ on $B_{\varrho}(0)$.
\end{proof}

From now on we are going to work with a fixed but arbitrary $0<\varrho <1$.
For $\iiv,\jjv\in\Sigma$, let
\begin{align*}
    F_{\iiv,\jjv}^1(b) &:= b\:\Pi^{\Phi}_b(\iiv)+\frac{1+b}{2}\:\Pi^{\Phi}_b(\jjv)-1,\\
    F_{\iiv,\jjv}^2(b) &:= b\:\Pi^{\Phi}_b(\iiv)+\frac{1+b}{2}\:\Pi^{\Phi}_b(\jjv)+1,\\
    F_{\iiv,\jjv}^3(b) &:=\Delta_{\iiv,\jjv}(b):=\Pi^{\Phi}_b(\iiv)-\Pi^{\Phi}_b(\jjv).
\end{align*}
These functions take zero on certain domains due to the overlaps in the IFS. For instance, $F_{\iiv,\jjv}^1(b)\equiv 0$ when $\iiv=i_1i_2\dots, \jjv=j_1j_2\dots$ and $\forall n: i_n=1, j_n=3$. We define the following sets
\begin{align*}
    A_1 &:=\left\{ (\iiv,\jjv)\in\Sigma\times\Sigma: (i_1,j_1)\neq (1,3) \right\}\\
    A_2 &:=\left\{ (\iiv,\jjv)\in\Sigma\times\Sigma: (i_1,j_1)\neq (3,1) \right\}\\
    A_3 &:=\left\{ (\iiv,\jjv)\in\Sigma\times\Sigma: (i_1,j_1)\in\{(1,3),(3,1)\} \right\}.
\end{align*}

\begin{lemma}\label{od95}
    For any $k\in\{1,2,3\}$ and all $(\iiv,\jjv)\in A_k$, the function $b\mapsto F^k_{\iiv,\jjv}(b)$ is not the constant zero function on the interval $(-\varrho,\varrho)$ for any $0<\varrho<1$. That is,
    \[
      F^k_{\iiv,\jjv}(b)\not\equiv 0 \mbox{ on } (-\varrho,\varrho).
    \]
\end{lemma}

\begin{proof}
    First let $k=3$ and $b<0$, then choose an arbitrary $(\iiv,\jjv)\in A_3$. In this case the first cylinder intervals $\phi_1(I)$ and $\phi_3(I)$ are disjoint, thus the statement trivially holds. In particular,
    \[
      |F_{\iiv,\jjv}^3(b)|=|\Pi^{\Phi}_b(\iiv)-\Pi^{\Phi}_b(\jjv)|\geq\frac{-4b}{1-b}>-b>0.
    \]

    The remaining two cases are very similar and can be proved analogously, so we only present here the $k=1$ case.
    Let $k=1$ and let $(\iiv,\jjv)\in A_1$ be arbitrary words. According to Lemma \ref{od96}, $F_{\iiv,\jjv}^1(b)$ is analytic on $(-\varrho,\varrho)$. That is, there are coefficients $a_0,a_1,\dots$ for which
    \[
      F_{\iiv,\jjv}^1(b)=\sum_{n=0}^\infty a_nb^n.
    \]
    By the definition of $F_{\iiv,\jjv}^1(b)$, $a_0=\frac{1}{2}\Pi^{\Phi}_0(\jjv)-1$. Further, since $\Pi^{\Phi}_0(\jjv)\in[-2,2]$, we get that $a_0\leq 0$, and $a_0=0$ if and only if $\jjv=33\dots$.

    From now on we assume that $\jjv=33\dots$. We aim to show that $a_1\neq 0$ in this case, and hence $F_{\iiv,\jjv}^1(b)\not\equiv 0$ for $\iiv,\jjv\in A_1$.
    If $\jjv=33\dots$, then
    \begin{align*}
      F_{\iiv,\jjv}^1(b) &=b\Pi^{\Phi}_b(\iiv)+\frac{b+1}{2}\frac{2}{1-b}-1= \\
      &=b\Pi^{\Phi}_b(\iiv)+\frac{2b}{1-b}=b\Pi^{\Phi}_b(\iiv)+\sum_{n=1}^\infty 2b^n.
    \end{align*}
    
    Let us recall that $\sigma$ denotes the left shift on $\Sigma=\{1,2,3\}^{\mathbb{N}}$. Since $(\iiv,\jjv)\in A_1$, either $i_1=2$ or $i_1=3$.
    If $i_1=2$, then
    \[
      F_{\iiv,\jjv}^1(b)=-b^2\Pi^{\Phi}_b(\sigma\iiv)+\sum_{n=1}^\infty 2b^n,
    \]
    hence $a_1=2$, as the terms of $-b^2\Pi^{\Phi}_b(\sigma\iiv)$ are of second order or higher.
    Otherwise, $i_1=3$, and then
    \begin{align*}
      F_{\iiv,\jjv}^1(b) &=b\frac{b+1}{2}\Pi^{\Phi}_b(\sigma\iiv)+b+\sum_{n=1}^{\infty}2b^n
      \\
      &=\frac{b^2}{2}\Pi^{\Phi}_b(\sigma\iiv)+b\left(3+\frac{1}{2}\Pi^{\Phi}_b(\sigma\iiv)\right)+\sum_{n=2}^{\infty}2b^n,
    \end{align*}
    hence $a_1=3+\frac{1}{2}\Pi^{\Phi}_0(\sigma\iiv)$. As $\Pi^{\Phi}_0(\sigma\iiv)\in[-2,2]$, we have $a_1\geq 2$. It follows that the power series expansion of $F_{\iiv,\jjv}^1(b)$ always has some non-zero coefficients, and hence $F_{\iiv,\jjv}^1(b)\not\equiv 0$.
\end{proof}

\begin{lemma}\label{od94}
    There exist $c>0$ and $n>0$ such that for all $b\in(-\varrho,\varrho)$, all $k\in\{1,2,3\}$ and all $(\iiv,\jjv)\in A_k$
    \begin{equation}\label{od92}
      \exists p\in\{0,\dots,n\} \mbox{ such that }
      \bigg\vert \frac{\mathrm{d}^p}{\mathrm{d}b^p} F_{\iiv,\jjv}^k(b) \bigg\vert >c.
    \end{equation}
\end{lemma}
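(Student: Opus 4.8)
The argument will be by contradiction together with a compactness argument on $\Sigma\times\Sigma$, using that each $F^k_{\iiv,\jjv}$ is the restriction of a holomorphic function on a fixed complex disc and that, by Lemma~\ref{od95}, none of the relevant ones vanishes identically.

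\emph{Step 1: holomorphic dependence, uniformly in the symbolic data.} We may assume $\varrho<1$ (the interval $(-1,1)$ is exhausted by such $\varrho$, which is all that the sequel requires), and fix $\varrho<\varrho_1<1$. On $B_{\varrho_1}(0)\subset\mathbb C$ the contraction ratios of $\phi_1,\phi_2,\phi_3$ have modulus at most $\theta:=\frac{1+\varrho_1}{2}<1$, so the disc $\{|x|\le(1-\theta)^{-1}\}$ is $\phi_i$-invariant for all $i$ and all $b\in B_{\varrho_1}(0)$; hence $\Pi^{\Phi}_b(\iiv)$ lies in it. Writing $\Pi^{\Phi}_b(\iiv)=\phi_{i_1}\circ\cdots\circ\phi_{i_N}\bigl(\Pi^{\Phi}_b(\sigma^N\iiv)\bigr)$, the linear part of $\phi_{i_1}\circ\cdots\circ\phi_{i_N}$ has modulus $\le\theta^N$, so whenever $\iiv,\jjv$ agree on their first $N$ symbols,
\[
  \sup_{b\in B_{\varrho_1}(0)}\bigl|\Pi^{\Phi}_b(\iiv)-\Pi^{\Phi}_b(\jjv)\bigr|\le \frac{2}{1-\theta}\,\theta^N .
\]
Since each $F^k_{\iiv,\jjv}$ is a fixed polynomial combination of $b$ and two such projections, it is holomorphic on $B_{\varrho_1}(0)$, and the assignment $(\iiv,\jjv)\mapsto F^k_{\iiv,\jjv}$ is continuous from $\Sigma\times\Sigma$ into the space of holomorphic functions on $B_{\varrho_1}(0)$ with the topology of uniform convergence on compacts. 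By Cauchy's estimates it follows that, for each fixed $p\ge 0$, the map $(\iiv,\jjv,b)\mapsto \frac{\mathrm d^p}{\mathrm db^p}F^k_{\iiv,\jjv}(b)$ is continuous on $\Sigma\times\Sigma\times[-\varrho,\varrho]$.

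\emph{Step 2: contradiction and compactness.} Suppose the conclusion fails. Applying the negation with $c=1/m$ and $n=m$, for each $m\in\mathbb N$ we obtain $k_m\in\{1,2,3\}$, $(\iiv^{(m)},\jjv^{(m)})\in A_{k_m}$ and $b_m\in(-\varrho,\varrho)$ with
\[
  \Bigl|\tfrac{\mathrm d^p}{\mathrm db^p}F^{k_m}_{\iiv^{(m)},\jjv^{(m)}}(b_m)\Bigr|\le \tfrac1m\qquad\text{for all }p\in\{0,\dots,m\}.
\]
Passing to a subsequence we may assume $k_m\equiv k$, $b_m\to b_\ast\in[-\varrho,\varrho]$, and $\iiv^{(m)}\to\iiv_\ast$, $\jjv^{(m)}\to\jjv_\ast$ in $\Sigma$; because each $A_k$ is a clopen subset of $\Sigma\times\Sigma$ (it is determined by the first symbols), $(\iiv_\ast,\jjv_\ast)\in A_k$. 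Fixing any $p\ge 0$ and letting $m\to\infty$ along the subsequence, the continuity from Step~1 gives $\frac{\mathrm d^p}{\mathrm db^p}F^k_{\iiv_\ast,\jjv_\ast}(b_\ast)=0$. As this holds for every $p$ and $F^k_{\iiv_\ast,\jjv_\ast}$ is holomorphic on the connected set $B_{\varrho_1}(0)\ni b_\ast$, the identity theorem forces $F^k_{\iiv_\ast,\jjv_\ast}\equiv 0$ on $(-\varrho_1,\varrho_1)\supset(-\varrho,\varrho)$, contradicting Lemma~\ref{od95}. Hence some pair $(c,n)$ works, as claimed.

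\emph{Main obstacle.} The only real content beyond bookkeeping is the joint continuity in Step~1: one must upgrade the uniform exponential closeness of $\Pi^{\Phi}_b(\iiv)$ for nearby $\iiv$ (uniform over the complex disc $B_{\varrho_1}(0)$) to uniform convergence of \emph{all} $b$-derivatives on $[-\varrho,\varrho]$ via Cauchy's estimates. Once this normal-families input is secured, the compactness and identity-theorem steps are immediate.
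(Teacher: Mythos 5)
Your proof is correct and follows essentially the same approach as the paper's: an argument by contradiction combined with compactness of $\Sigma\times\Sigma\times[-\varrho,\varrho]$, uniform convergence of derivatives along symbolic limits, and the identity theorem applied against Lemma~\ref{od95}. Your Step~1 supplies a careful justification (Cauchy estimates on a larger complex disc $B_{\varrho_1}(0)$) for the uniform convergence of all derivatives that the paper states without detail in \eqref{od93}, and it also cleanly handles the possibility $b_\ast=\pm\varrho$ by working on the enlarged disc, which is a small but real tightening of the paper's argument.
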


\begin{proof}
    We follow the lines of the proof of \cite[Proposition~5.7]{hochman2014self}. {Let $(\iiv_n)_{n\in\mathbb{N}},(\jjv_n)_{n\in\mathbb{N}}\subset\Sigma$} be sequences in the symbolic space such that $(\iiv_n,\jjv_n)\in A_k$ for all $n\in\mathbb{N}$ and $\lim_{n\to\infty} (\iiv_n,\jjv_n) = (\iiv,\jjv)$. Then for all $p\in\mathbb{N}$
    \begin{equation}\label{od93}
      \frac{\mathrm{d}^p}{\mathrm{d}b^p} F_{\iiv_n,\jjv_n}^k(b) \longrightarrow
      \frac{\mathrm{d}^p}{\mathrm{d}b^p} F_{\iiv,\jjv}^k(b)  \mbox{ uniformly on }
      (-\varrho,\varrho).
    \end{equation}

    To prove the statement of the lemma we argue by contradiction. Assume that
    for every $n\geq 1$ there exist $b_n\in(-\varrho,\varrho)$ and $(\iiv_n,\jjv_n)\in A_k$ such that
    \[
      \bigg\vert \frac{\mathrm{d}^p}{\mathrm{d}b^p} F_{\iiv_n,\jjv_n}^k(b_n) \bigg\vert <\frac{1}{n} \mbox{ for all } p\in\{0,\dots,n\}.
    \]
    Then, there exists a subsequence $\{n_l\}_{l\geq 1}$ such that for some $b\in(-\varrho,\varrho)$ and $\iiv,\jjv\in A_k$
    \[
      \lim_{l\to\infty} b_{n_l} = b,\quad
      \lim_{l\to\infty} (\iiv_{n_l},\jjv_{n_l}) = (\iiv,\jjv).
    \]
    By \eqref{od93}, $\frac{\mathrm{d}^p}{\mathrm{d}b^p} F_{\iiv,\jjv}^k(b)=0$ for all $p\in\mathbb{N}$. Since $F_{\iiv,\jjv}^k(b)$ is analytic, it implies that $F_{\iiv,\jjv}^k(b)\equiv 0$ on $(-\varrho,\varrho)$, which contradicts Lemma \ref{od95}.
\end{proof}

\begin{proposition}\label{od91}
    There exists a set $\mathcal{E}\subset (0,\varrho)$ with $\dim_{\mathrm{H}}\mathcal{E}=0$ such that for all $b\in(0,\varrho)\setminus\mathcal{E}$
    \begin{equation}\label{od90}
        \exists \delta>0, \exists N\in\mathbb{N}, \forall l\geq N,
        \forall (\iiv,\jjv)\in(\Sigma_l\times\Sigma_l)\cap A_k: \big\vert F_{\iiv,\jjv}^k(b) \big\vert>\delta^l,
    \end{equation}
    for every $k\in\{1,2,3\}$.
\end{proposition}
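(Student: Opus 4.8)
The plan is to combine the quantitative non-degeneracy of the functions $F^k_{\iiv,\jjv}$ (Lemma~\ref{od94}) with a parametric transversality argument in the spirit of Hochman's work, turning a uniform lower bound on some derivative into an upper bound on the size of the set of bad parameters. First I would fix $k\in\{1,2,3\}$ and recall from Lemma~\ref{od94} that there are constants $c>0$ and $n\in\mathbb{N}$, independent of the words, such that for every $(\iiv,\jjv)\in A_k$ at least one of the derivatives $\frac{\mathrm{d}^p}{\mathrm{d}b^p}F^k_{\iiv,\jjv}$, $p\in\{0,\dots,n\}$, exceeds $c$ in absolute value on $(-\varrho,\varrho)$. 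A standard lemma on functions with a large higher derivative (e.g. the ``$(C,D)$-function'' argument of Hochman or a direct Taylor/Markov-brothers estimate) then gives: if $|F^k_{\iiv,\jjv}(b)|<\delta$ for some $b$, and the $(n{+}1)$-st derivative of $F^k_{\iiv,\jjv}$ is bounded by a constant $M$ uniformly in the words (which holds because the $\Pi^\Phi_b$ and their derivatives are uniformly bounded on $(-\varrho,\varrho)$ by Lemma~\ref{od96} and Cauchy estimates), then $b$ lies within distance $O(\delta^{1/(n+1)})$ of a zero of $F^k_{\iiv,\jjv}$, and the set of such $b$ is covered by $O(1)$ intervals of length $O(\delta^{1/(n+1)})$.

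Next I would set up the exceptional set. For $l\in\mathbb{N}$ and $\delta>0$ let
\[
  E_l(\delta)=\bigcup_{k\in\{1,2,3\}}\ \bigcup_{(\iiv,\jjv)\in(\Sigma_l\times\Sigma_l)\cap A_k}
  \big\{ b\in(0,\varrho): |F^k_{\iiv,\jjv}(b)|\leq \delta^l \big\}.
\]
By the previous paragraph each word-pair contributes $O(1)$ intervals of length $O\big(\delta^{l/(n+1)}\big)$, and there are at most $3\cdot 9^l$ word-pairs, so $E_l(\delta)$ is covered by $O(9^l)$ intervals of length $O(\delta^{l/(n+1)})$. Choosing $\delta$ close enough to $1$ that $9\,\delta^{s/(n+1)}<1$ for a prescribed small $s>0$, the $s$-dimensional Hausdorff content of $E_l(\delta)$ is $O\big((9\,\delta^{s/(n+1)})^l\big)\to 0$, hence $\mathcal H^s\big(\limsup_l E_l(\delta)\big)=0$. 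Taking the intersection over a sequence $s_m\downarrow 0$ and a corresponding sequence $\delta_m\uparrow 1$, and setting $\mathcal E=\bigcap_m \limsup_l E_l(\delta_m)$, one gets $\dim_{\rm H}\mathcal E=0$. For $b\notin\mathcal E$ there is, for each $m$, some $N$ with $|F^k_{\iiv,\jjv}(b)|>\delta_m^l$ for all $l\geq N$ and all admissible $(\iiv,\jjv)$; since any fixed $\delta>0$ smaller than some $\delta_m$ works, this is exactly \eqref{od90}. (The indexing ``$k\in\{0,1,3\}$'' in the statement is evidently a typo for $k\in\{1,2,3\}$, matching Lemmas~\ref{od95}--\ref{od94}.)

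The main obstacle is the transversality-type input: making precise, with constants uniform over the infinite family of word-pairs, the implication ``large derivative of order $\leq n$ $\Rightarrow$ few and short bad intervals.'' This requires (i) a uniform upper bound on $F^k_{\iiv,\jjv}$ and its first $n{+}1$ derivatives on $(-\varrho,\varrho)$ — obtained from the uniform convergence in Lemma~\ref{od96} together with Cauchy's estimates on a slightly larger disc, so one should take the analyticity radius strictly bigger than $\varrho$ — and (ii) the elementary but slightly delicate real-variable fact that a $C^{n+1}$ function on an interval whose $(n{+}1)$-st derivative is bounded and one of whose derivatives of order $\leq n$ is everywhere $\geq c$ in modulus has only boundedly many near-zeros, with the bound depending only on $c$, $n$, the derivative bound, and $\varrho$. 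Both are routine in isolation; the care is entirely in the uniformity, and this is where I would spend most of the write-up, essentially transcribing the relevant part of \cite[Proof of Proposition~5.7]{hochman2014self} to the present functions.
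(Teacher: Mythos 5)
Your proposal follows the same route as the paper: uniform non-degeneracy from Lemma~\ref{od94}, a Hochman-style covering lemma bounding the sublevel set $\{b:|F^k_{\iiv,\jjv}(b)|<\delta^l\}$ by a bounded number of short intervals, a count of the $O(9^l)$ word-pairs of length $l$, and a Hausdorff-content estimate followed by an intersection over a sequence of $\delta$'s. The paper applies \cite[Lemma~5.8]{hochman2014self} directly, which produces the exponent $1/2^n$; your Taylor/Markov sketch with exponent $1/(n{+}1)$ is not quite that lemma and would need the extra uniform bound on the $(n{+}1)$-st derivative via Cauchy estimates (which the paper sidesteps), but since you say you would in the end transcribe Hochman's proof, the exact exponent is immaterial.

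One slip to fix: $9\delta^{s/(n+1)}<1$ is equivalent to $\delta<9^{-(n+1)/s}$, so for a prescribed small $s>0$ you need $\delta$ \emph{small}, not close to $1$; the sequence must therefore be $\delta_m\downarrow 0$, not $\delta_m\uparrow 1$. This matters: $\limsup_l E_l(\delta)$ is increasing in $\delta$, so intersecting over an increasing sequence $\delta_m\uparrow 1$ just returns the first set $\limsup_l E_l(\delta_1)$, whose dimension bound is the positive number $(n{+}1)\log 9/(-\log\delta_1)$, and the claimed conclusion $\dim_{\rm H}\mathcal E=0$ does not follow. Taking $\delta_m\downarrow 0$ makes this bound tend to $0$, exactly as in the paper. (You are right that ``$k\in\{0,1,3\}$'' in the statement is a typo for $k\in\{1,2,3\}$.)
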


\begin{proof}
    Let $c>0$ and $n>0$ be the constants defined by Lemma \ref{od94}.
    Since formula \eqref{od92} holds for any $k\in \{1,2,3\}$ and $(\iiv,\jjv)\in A_k$, we may apply \cite[Lemma~5.8]{hochman2014self} to $F_{\iiv,\jjv}^k$. In particular, if $0<x<(c/2)^{2^n}$, the set $(F_{\iiv,\jjv}^k)^{-1}(-x,x)$ can be covered by $K$ many intervals of length $\tilde{c}x^{\frac{1}{2^n}}$, where $\tilde{c}\leq 2(1/c)^{\frac{1}{2^n}}$, and $K:=K(n,c)=O(1/c^n)$. That is, the set
    \[
      { \bigcup_{\iiv,\jjv\in(\Sigma_l\times\Sigma_l)\cap A_k}
      \{b: \big\vert F_{\iiv,\jjv}^k(b)\big\vert <\delta^l\}}
    \]
    can be covered by {$9^l K$} many intervals of length $\tilde{c}\delta^{\frac{l}{2^n}}$, if $l$ is large enough.

    {
    Define the set $\mathcal{E}$ as
    \begin{align*}
      \mathcal{E}=\big\{
        b\in(0,\varrho):  \forall\delta>0, \forall N\in\mathbb{N}, \exists l\geq N, \exists k\in\{1,2,3\}, \\
        \exists (\iiv,\jjv)\in(\Sigma_l\times\Sigma_l)\cap\Lambda_k
        \mbox{ such that }
        \big\vert F_{\iiv,\jjv}^k(b) \big\vert <\delta^l
      \big\}.
    \end{align*}
    Clearly, every $b\in(0,\varrho)\setminus\mathcal{E}$ satisfies \eqref{od90}.
    We can cover $\mathcal{E}$ the following way
    \[
      \mathcal{E}\subset \bigcap_{\delta>0} \mathcal{E}_{\delta} \mbox{, where }
      \mathcal{E}_{\delta}:= \bigcap_{N=1}^\infty \bigcup_{l=N}^\infty \bigcup_{\iiv,\jjv\in(\Sigma_l\times\Sigma_l)\cap A_k}
      \{b: \big\vert F_{\iiv,\jjv}^k(b)\big\vert <\delta^l\}.
    \]
    Set $M:=M(\delta):=\min\{l\geq 1: \delta^l<(c/2)^{2^n}\}$.
    We obtain
    \begin{align*}
      \mathcal{H}_{\tilde{c}\delta^{\frac{M}{2^n}}}^s(\mathcal{E}_{\delta})
      &\leq \sum_{l=M}^{\infty} \mathcal{H}_{\tilde{c}\delta^{\frac{M}{2^n}}}^s
      \left(\bigcup_{\iiv,\jjv\in(\Sigma_l\times\Sigma_l)\cap A_k}
      \{b: \big\vert F_{\iiv,\jjv}^k(b)\big\vert <\delta^l\}\right)
      \\
      &\leq \sum_{l=M}^{\infty} 9^lK\tilde{c}^s\delta^{\frac{l}{2^n}s} <\infty,
    \end{align*}
    if $9\delta^{\frac{s}{2^n}}<1$. It follows that $\dim_{\mathrm{H}}\mathcal{E}_{\delta}\leq \frac{2^n\log 9}{-\log\delta}$ for each $\delta>0$, and hence $\dim_{\rm H}\mathcal{E}=0$.
    }
\end{proof}

\begin{proof}[Proof of Theorem {\ref{od97}}]
  Pick arbitrary $0<z<\varrho<1$ and $b\in(z,\varrho)\setminus\mathcal{E}$, where $\mathcal{E}\subset (0,\varrho)$ is the set defined in Proposition \ref{od91}. Let $\delta, N$ be constants determined by \eqref{od90}. We further define
  \[
    \Gamma:=\min_{k\in\{1,2,3\}} \min_{\substack{(\iiv,\jjv)\in(\Sigma_l\times\Sigma_l)\cap A_k \\ l=0,\dots,N}}
    \left|F_{\iiv,\jjv}^k(b)\right|.
  \]
  Since $\Gamma<1$, by setting $\varepsilon:=\min\{\Gamma,\delta\}$ we have
  \begin{equation}\label{od88}
    \forall n\in\mathbb{N}, \forall (\iiv,\jjv)\in (\Sigma_n\times\Sigma_n)\cap A_k:
    \big\vert F_{\iiv,\jjv}^k(b) \big\vert >\varepsilon^n.
  \end{equation}

  To prove \eqref{od89}, we need to calculate the distance of the projections for all pairs of words, and not just the elements of $A_3$. For $n\in\mathbb{N}$ we define
  \begin{equation}\label{od87}
    \Delta_n(b):=\min_{\substack{\iiv,\jjv\in\Sigma_n \\ \iiv\neq\jjv}}
    \big\vert \Pi^{\Phi}_b(\iiv|_n)-\Pi^{\Phi}_b(\jjv|_n) \big\vert .
  \end{equation}
  Let $\iiv,\jjv\in\Sigma_n$ be the two words where the minimum in \eqref{od87} is attained.
  
  Observe that the projection of $\iiv|_n$ does not change when we write arbitrarily many $2$-s at its end. That is
  \[
    \Pi^{\Phi}_b(\iiv|_n)=\Pi^{\Phi}_b(\iiv|_n22\ldots),
  \]
   To make sure that our words do not end in $1$ and $3$ simultaneously, we introduce the sequences \[
    \iiv^{'}_{n}:=\iiv|_n22\ldots, \:\jjv^{'}_{n}:=\jjv|_n22\ldots.
  \]
  As usual, the characters of $\iiv,\jjv$ are denoted by $i_n,j_n$, while the characters of $\iiv^{'},\jjv^{'}$ are denoted by $i^{'}_n,j^{'}_n$ for $n>0$.

  Now we give lower bounds on $\Delta_n$, based only on $\iiv$ and $\jjv$. As $\iiv\neq\jjv$, we have $m:=\vert \iiv\wedge\jjv\vert<n$. If $(i_{m+1},j_{m+1})\in\{(1,3),(3,1)\}$, then
  \begin{align}\label{od86}
    \big\vert\Pi^{\Phi}_b(\iiv|_n)-\Pi^{\Phi}_b(\jjv|_n)\big\vert
    &\geq b^m\big\vert\Pi^{\Phi}_b(\sigma^m\iiv^{'}_{n+1})-\Pi^{\Phi}_b(\sigma^m\jjv^{'}_{n+1})\big\vert \\
    &\geq b^m\varepsilon^{n+1-m}
    \geq z^m\varepsilon^n
    \geq (z\varepsilon)^n. \nonumber
  \end{align}
  In the second inequality, we used that $(\sigma^m\iiv^{'}_{n},\sigma^m\jjv^{'}_{n})\in A_3$, and hence \eqref{od88} applies.

  If $(i_{m+1},j_{m+1})\in\{(1,2),(2,1)\}$, then we may assume $(i_{m+1},j_{m+1})=(2,1)$ without loss of generality. Let $q:=\min\{l\geq 0: (i^{'}_{l+m+2},j^{'}_{l+m+2})\neq (1,3)\}$. We note that {$q\leq n-m-1$} always holds, since $i^{'}_{n+1}=j^{'}_{n+1}=2$.
  {
  \begin{align*}
    \big\vert\Pi^{\Phi}_b(&\iiv|_n)-\Pi^{\Phi}_b(\jjv|_n)\big\vert
    \geq b^m\big\vert\Pi^{\Phi}_b(\sigma^m\iiv^{'}_{n})-\Pi^{\Phi}_b(\sigma^m\jjv^{'}_{n})\big\vert \\
    &= b^m\big\vert -b\Pi^{\Phi}_b(\sigma_{m+1}\iiv^{'}_{n}) -\frac{b+1}{2}\Pi^{\Phi}_b(\sigma_{m+1}\jjv^{'}_{n})+1\big\vert \nonumber \\
    &= b^m\left(\frac{b+1}{2}\right)^q\big\vert -b\Pi^{\Phi}_b(\sigma_{m+q+1}\iiv^{'}_{n}) -\frac{b+1}{2}\Pi^{\Phi}_b(\sigma_{m+q+1}\jjv^{'}_{n})+1\big\vert \nonumber \\
    &\geq b^m\left(\frac{b+1}{2}\right)^q\varepsilon^{n-(m+q+1)}
    \geq z^m\left(\frac{z}{2}\right)^q\varepsilon^n
    \geq \left(\frac{z\varepsilon}{2}\right)^n. \nonumber
  \end{align*}
    }
  The case when $(i_{m+1},j_{m+1})\in\{(2,3),(3,2)\}$ is analogous. As $0<z<\varrho<1$ were arbitrary, the proof is complete.

\end{proof}

Theorem \ref{od97} ensures that the projection of Okamoto's function to its weak contracting direction satisfies the strong exponential separation condition.  This way we can use Hochman's \cite{hochman2014self} and Shmerkin's results \cite{shmerkin2019furstenberg} to calculate the dimension of the graph of Okamoto's function.

\section{Dimensions of the graph}\label{od27}

The main goal of this chapter is calculating the Hausdorff dimension of $\mathcal{O}_a$ using the Feng-Hu Theorem~\ref{od15}, which is a generalization of the celebrated Ledrappier-Young formula to iterated function systems. It lets us reduce the problem of calculating the dimension of a set to calculating the dimension of its projections. In the case of the Okamoto's IFS, the weak contracting direction is defined by the ${\sf y}$-axis, thus we will need to work with $\proj_{\sf y} \mathcal{O}_a$.

First, we focus on the Hausdorff dimension and calculate its value for typical parameters by constructing a measure $\mu_0$ supported on $\mathcal{O}_a$ for which\[
  \dim_{\rm H}\mu_0 = \dim_{\rm Aff}\mathcal{O}_a.
\]

We code the points of $\mathcal{O}_a$ with the elements of the symbolic space $\Sigma=\{1,2,3\}^{\mathbb{N}}$. For $n\in\mathbb{N}$, let us write $\Sigma_n=\{1,2,3\}^n$ for the set of length $n$ words and $\Sigma_{\ast}=\bigcup_{n=0}^{\infty}\{1,2,3\}^n$ for the set of all finite words. The function $\Pi^{\mathcal{F}}_a:\Sigma\to[0,1]^2$ that relates the words of the symbolic space to the attractor $\mathcal{O}_a$ is called the \texttt{natural projection}
\begin{equation}\label{od69}
  \forall \iiv=i_1i_2\dots\in\Sigma : \Pi^{\mathcal{F}}_a(\iiv):=\lim_{n\to\infty} f_{i_1}\circ\cdots\circ f_{i_n}(0).
\end{equation}
For two words $\iiv,\jjv\in\Sigma$, we denote their common initial part with $\iiv\wedge\jjv$ and its length with $|\iiv\wedge\jjv|$. As usual, we endow the symbolic space with the metric
\[
  d(\iiv,\jjv)=2^{-\sup\{n:|\iiv\wedge\jjv|=n\}}.
\]

\subsection{Hausdorff dimension of the graph}\label{od78}

Recall that our original planar IFS $\mathcal{F}_a$ consists of the functions
\begin{align*}
  f_1(x,y)&=\left(\frac{x}{3},ay\right),\\
  f_2(x,y)&=\left(\frac{x+1}{3},(1-2a)y+a\right),\\
  f_3(x,y)&=\left(\frac{x+2}{3},ay+1-a\right),
\end{align*}
where $a\in(1/2,1)$ is the parameter of the system. We write $\mathcal{O}_a$ for the attractor of this IFS and $\Pi^{\mathcal{F}}_a:\Sigma\to[0,1]^2$ for the natural projection with respect to this IFS.

Let $\mu_0:=\mu_0(a)$ be the natural measure of the Okamoto IFS. In particular, $\mu_0$ can be obtained by taking the push-forward by $\Pi^{\mathcal{F}}_a$ of a Bernoulli measure $\nu$ on $\Sigma$ defined with probabilities
\[
  p_1=a\left(\frac{1}{3}\right)^{s_0-1},
  p_2=(2a-1)\left(\frac{1}{3}\right)^{s_0-1},
  p_3=a\left(\frac{1}{3}\right)^{s_0-1},
\]
where $s_0$ is the unique number satisfying
\begin{equation}\label{od80}
  (4a-1)\left(\frac{1}{3}\right)^{s_0-1}=1.
\end{equation}
Observe that by Definition \ref{od73} $\dim_{\rm Aff}\mathcal{O}_a=s_0$, thus
\begin{equation}\label{od72}
  \dim_{\rm H}\mathcal{O}_a\leq\overline{\dim}_{\rm B}\mathcal{O}_a\leq \dim_{\rm Aff}\mathcal{O}_a=s_0.
\end{equation}

Let $\proj_{\sf y}:\mathbb{R}^2\to\mathbb{R}$ be the orthogonal projection to the ${\sf y}$-axis
\begin{equation}\label{od63}
  \forall (x,y)\in \mathbb{R}^2: \proj_{\sf y}(x,y)=y.
\end{equation}
In Section \ref{od26}, we introduced the self-similar IFS $\mathcal{S}_a=\{ax, (1-2a)x+a,ax+1-a\}, a\in(1/2,1)$ as the projection of $\mathcal{F}$ to the ${\sf y}$-axis. The measure $(\proj_{\sf y}\circ\Pi^{\mathcal{F}}_a)_{\ast}\nu$ is a self-similar measure of $\mathcal{S}_a$ possessing the following nice property.
\begin{lemma}\label{od23}
  There exists an exceptional set of parameters $\mathcal{E}\subset\left(\frac{1}{2},1\right)$ with $\dim_{\rm H}\mathcal{E}=0$ such that for every $a\in\left(\frac12,1\right)\setminus\mathcal{E}$ and every $q>1$
   \[D((\proj_{\sf y}\circ\Pi^{\mathcal{F}}_a)_{\ast}\nu,q)=1.\]
  In particular,
  \[
    \underline{\dim}_{\rm loc}(\proj_{\sf y}\circ\Pi^{\mathcal{F}}_a)_{\ast}\nu(y)\geq1,
  \]
  {for every $y\in(0,1)$.}
\end{lemma}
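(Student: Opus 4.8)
The plan is to realise $(\proj_y\circ\Pi^{\mathcal{F}}_a)_{\ast}\nu$ as a self-similar measure of the IFS $\mathcal{S}_a$ (equivalently, after conjugation, of $\Phi_b$ with $b=2a-1$) and then apply Shmerkin's $L^q$-dimension formula, Theorem~\ref{od07b}, together with the separation result Theorem~\ref{od97}. First I would record that $(\proj_y\circ\Pi^{\mathcal{F}}_a)_{\ast}\nu = \Pi^{\mathcal{S}}_{a,\ast}\nu$, where $\nu$ is the Bernoulli measure with weights $\mathbf{p}=(p_1,p_2,p_3)$, $p_i=|r_i|^{s_0-1}$ and $(r_1,r_2,r_3)=(a,1-2a,a)$; this is just the compatibility of $\proj_y$ with the fibred structure of $\mathcal{F}_a$, already noted in the paragraph preceding the lemma. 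Thus it is a self-similar measure for a one-dimensional IFS, and Theorem~\ref{od07b} is applicable provided $\mathcal{S}_a$ satisfies the ESC.

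The separation input is exactly Theorem~\ref{od97}: conjugating $\mathcal{S}_a$ to $\Phi_b$ via the affine map sending the attractor to $\Lambda_b$ (with $a=\frac{1+b}{2}$, i.e. $b=2a-1\in(0,1)$) is a bi-Lipschitz change of coordinates, so \eqref{od89} immediately gives that $\mathcal{S}_a$ satisfies the SESC — and hence the ESC — for all $a\in(1/2,1)\setminus\mathcal{E}$, where $\mathcal{E}$ is the image under $b\mapsto\frac{1+b}{2}$ of the zero-dimensional exceptional set from Theorem~\ref{od97} (still of Hausdorff dimension zero, since the conjugating parameter map is Lipschitz). With the ESC in hand, Theorem~\ref{od07b} yields $D(\Pi^{\mathcal{S}}_{a,\ast}\nu,q)=\min\{1,\tau(q)/(q-1)\}$, where $\tau(q)$ solves $\sum_{i=1}^3 p_i^q|r_i|^{-\tau(q)}=1$. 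The point is now an algebraic identity: by the very choice $p_i=|r_i|^{s_0-1}$, substituting $\tau=q-1$ gives $\sum_i p_i^q|r_i|^{-(q-1)}=\sum_i |r_i|^{q(s_0-1)}|r_i|^{-(q-1)}=\sum_i |r_i|^{(s_0-1)(q-1)+(s_0-1)-(q-1)}$ — hold on, it is cleaner to write $p_i^q |r_i|^{-(q-1)} = |r_i|^{q(s_0-1)-(q-1)}$ and note that $\sum_i |r_i|^{s_0-1}=1$ by \eqref{od80}, so one checks directly whether $q(s_0-1)-(q-1)=s_0-1$, i.e. $(q-1)(s_0-1)=(q-1)$, which holds iff $s_0=2$. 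So $\tau(q)=q-1$ is the solution precisely when... no: the correct bookkeeping is that $\tau(q)\ge q-1$ always would force $\tau(q)/(q-1)\ge 1$; the clean route is to verify $\tau(q)\ge q-1$ for all $q>1$ and conclude the minimum is $1$. I would do this by showing $\sum_i p_i^q |r_i|^{-(q-1)}\le 1$ with equality forced, using $\sum_i p_i = (4a-1)(1/3)^{s_0-1}=1$ together with $p_i\le |r_i|$ (which follows since $s_0-1<1$, i.e. $s_0<2$, equivalent to $4a-1<3$, i.e. $a<1$ — true) and a convexity/monotonicity comparison; this pins $\tau(q)/(q-1)\ge 1$, hence $D(\mu,q)=1$.

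Finally, the "in particular" clause is immediate from Shmerkin's Lemma~\ref{od01}: since $D((\proj_y\circ\Pi^{\mathcal{F}}_a)_{\ast}\nu,q)=1>s$ for every $s<1$ and every $q>1$, Lemma~\ref{od01} gives $\underline{\dim}_{\rm loc}(\proj_y\circ\Pi^{\mathcal{F}}_a)_{\ast}\nu(x)\ge(1-1/q)s$ for all $x$; letting $s\uparrow 1$ and $q\to\infty$ yields $\underline{\dim}_{\rm loc}\ge 1$ everywhere, as claimed. The main obstacle I anticipate is the algebraic verification that $\tau(q)/(q-1)\ge 1$ for all $q>1$ from the relation $p_i=|r_i|^{s_0-1}$ with $\sum|r_i|^{s_0-1}=1$: it is elementary but one must be careful with the sign of $s_0-1$ and with the fact that $r_2=1-2a<0$ so $|r_2|=2a-1$, matching the $b$ in $\Phi_b$; once set up correctly it reduces to the observation that $t\mapsto \sum_i p_i^q |r_i|^{-t}$ is strictly decreasing and equals $1$ exactly at $t=q-1$ because $p_i^q|r_i|^{-(q-1)} = p_i\cdot(p_i/|r_i|)^{q-1}$ and $\sum_i p_i \cdot(p_i/|r_i|)^{q-1}$ — here $p_i/|r_i| = |r_i|^{s_0-2}$, which is not uniformly $1$, so in fact $\tau(q)\ne q-1$ in general and one genuinely needs the inequality $\sum_i p_i (p_i/|r_i|)^{q-1}\le (\sum_i p_i)^{?}$... the honest statement is that $\sum_i p_i^q |r_i|^{-(q-1)} \le \bigl(\sum_i p_i |r_i|^{-1}\cdot\text{something}\bigr)$; I would instead argue via the multifractal identity $\tau(q)\ge (q-1)D(\mu,q)$ trivially and use the known lower bound $D(\mu,q)\ge D(\mu,\infty)\ge \dim_{\rm H}\mu$ where $\dim_{\rm H}\mu=\min\{1,h_{\mathbf p}/\chi(\mathbf p)\}=1$ by Hochman's Theorem~\ref{od07} — indeed $h_{\mathbf p}/\chi(\mathbf p) = \frac{-\sum p_i\log p_i}{-\sum p_i\log|r_i|}$ and with $\log p_i=(s_0-1)\log|r_i|$ this ratio is exactly $\frac{(s_0-1)\sum p_i\log|r_i| \cdot(-1)}{-\sum p_i\log|r_i|}$... which again gives $s_0-1<1$, not $1$. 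So the sharp tool really is Theorem~\ref{od07b} with the computation $\tau(q)=q-1$: since $\sum_i p_i^q|r_i|^{-(q-1)}$ with the substitution and $\sum p_i^{s_0'} $ — I will present the clean version in the proof, where the key line is that $\tau(q)/(q-1)\to D(\mu,q)$ and the equation $\sum p_i^q |r_i|^{-\tau}=1$ at $\tau=q-1$ becomes $\sum_i |r_i|^{(s_0-1)q-(q-1)}=1$, which by \eqref{od80} holds iff $(s_0-1)q-(q-1)=s_0-1$; since this fails, $\tau(q)>q-1$ (as the function is decreasing in $\tau$ and $<1$ at $\tau=q-1$ when $s_0<2$), giving $\tau(q)/(q-1)>1$ and hence $D(\mu,q)=1$ by the minimum — this last monotonicity check is the one routine computation I would actually carry out.
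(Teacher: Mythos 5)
Your overall plan is exactly the paper's: realise $(\proj_y\circ\Pi^{\mathcal{F}}_a)_{\ast}\nu$ as a self-similar measure of $\mathcal{S}_a$ (conjugate to $\Phi_b$), invoke Theorem~\ref{od97} for SESC outside a zero-dimensional exceptional set, apply Shmerkin's Theorem~\ref{od07b}, verify $\tau(q)>q-1$ by monotonicity, and finish with Lemma~\ref{od01} via $s\uparrow1$, $q\to\infty$. The genuine gap is in the weights, and it is what derails the computation you repeatedly flag as the obstacle. You take $p_i=|r_i|^{s_0-1}$ with $(r_1,r_2,r_3)=(a,1-2a,a)$; but this is not a probability vector (there is no reason $\sum_i|r_i|^{s_0-1}=1$), and it is not the Bernoulli measure fixed in Section~4. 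The correct K\"aenm\"aki-type weights, set just before the lemma, are
\[
p_1=p_3=a\left(\tfrac{1}{3}\right)^{s_0-1},\qquad p_2=(2a-1)\left(\tfrac{1}{3}\right)^{s_0-1},
\]
that is $p_i=|r_i|\,(1/3)^{s_0-1}=\beta_i\alpha_i^{s_0-1}$, with normalisation $\sum_i p_i=(4a-1)(1/3)^{s_0-1}=1$ being exactly \eqref{od80}. With these weights the step you could not close is a one-liner: at $\tau=q-1$,
\[
\sum_{i=1}^3 p_i^q|r_i|^{-(q-1)}
=\left(\tfrac{1}{3}\right)^{(s_0-1)q}\sum_{i=1}^3|r_i|^{q-(q-1)}
=\left(\tfrac{1}{3}\right)^{(s_0-1)q}(4a-1)
=\left(\tfrac{1}{3}\right)^{(s_0-1)(q-1)}<1
\]
for every $q>1$, since $s_0>1$ on $a\in(1/2,1)$. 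As $\tau\mapsto\sum_i p_i^q|r_i|^{-\tau}$ is strictly increasing in $\tau$ and tends to infinity, the unique root $\tau(q)$ of $\sum_i p_i^q|r_i|^{-\tau(q)}=1$ satisfies $\tau(q)>q-1$, so $D(\mu,q)=\min\{1,\tau(q)/(q-1)\}=1$, which is all that is needed. The various detours you attempt (the identity $p_i/|r_i|=|r_i|^{s_0-2}$, the Hochman bound $h_{\mathbf p}/\chi(\mathbf p)$ which with your $p_i$ gives $s_0-1<1$, and the chain $D(\mu,q)\geq D(\mu,\infty)\geq\dim_{\rm H}\mu$, whose second inequality goes the wrong way) are all artefacts of the incorrect $p_i$ and evaporate once the weights are corrected. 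The concluding application of Lemma~\ref{od01} is fine.
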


Before proving this lemma, we show how it implies $\dim_{\rm H}\mathcal{O}_a=s_0$.
Fix a parameter $a\in \left(\frac{1}{2},1\right)\setminus\mathcal{E}$ for now. As a consequence of Lemma~\ref{od23}, the Hausdorff dimension of $(\proj_{\sf y}\circ\Pi^{\mathcal{F}}_a)_{\ast}\nu$ is also greater than or equal to $1$, therefore
\begin{equation}\label{od21}
	\dim_{\rm H}(\proj_{\sf y}\circ\Pi^{\mathcal{F}}_a)_{\ast}\nu=1.
\end{equation}

In fact, the measure $(\proj_{\sf y}\circ\Pi^{\mathcal{F}}_a)_{\ast}\nu$ is the orthogonal projection of the natural measure $\mu_0$ perpendicular to the weak contracting direction of the IFS $\mathcal{F}$, and thus by Theorem \ref{od15}
\begin{align}\label{od77}
  \dim_{\rm H}\mu_0 &= \frac{h_{\mu_0}}{\chi_2}+\left(1-\frac{\chi_1}{\chi_2}\right)\dim_{\rm H}(\proj_{\sf y})_{\ast}\mu_0 \\
  &= \frac{h_{\mu_0}}{\chi_2}+\left(1-\frac{\chi_1}{\chi_2}\right)\dim_{\rm H}(\proj_{\sf y}\circ\Pi^{\mathcal{F}}_a)_{\ast}\nu = 1+\frac{h_{\mu_0} -\chi_1}{\chi_2},
\end{align}
where $h_{\mu_0}$ is the entropy and $0<\chi_1<\chi_2$ are the Lyapunov exponents of the measure $\mu_0$. It is easy to calculate these values, as $\mu_0=(\Pi^{\mathcal{F}}_a)_{\ast}\nu$ is a push-forward of a Bernoulli measure, and $\Pi_a^{\mathcal{F}}$ is a bijection outside a set of zero measure.
\begin{gather}\label{od76}
  h_{\mu_0} =-\sum_{i=1}^3p_i\log p_i
  = -\left(\frac{1}{3}\right)^{s_0-1} \bigg(
    (4a-1)\log\left(\frac{1}{3}\right)^{s_0-1} \\
    +\: 2a\log(a)+(2a-1)\log(2a-1) \bigg) \nonumber
\end{gather}
\begin{align}\label{od75}
  \chi_1 &=-\left(
  2a\left(\frac{1}{3}\right)^{s_0-1}\log a + (2a-1)\left(\frac{1}{3}\right)^{s_0-1}\log (2a-1)
  \right) \\
  &= -\left(\frac{1}{3}\right)^{s_0-1} \bigg(
    2a \log a + (2a-1)\log (2a-1)
  \bigg) \nonumber
\end{align}
\begin{equation}\label{od74}
  \chi_2=-\sum_{i=1}^3 p_i\log \frac{1}{3} = \log 3
\end{equation}
By substituting \eqref{od76},\eqref{od75} and \eqref{od74} back to \eqref{od77}, we obtain that
\begin{equation}\label{od71}
  \dim_{\rm H}\mu_0 = s_0,
\end{equation}
if $a\in \left(\frac{1}{2},1\right)\setminus\mathcal{E}$.
Since $\mu$ is supported on $\mathcal{O}_a$,
\begin{equation}\label{od20}
  \dim_{\rm H}\mathcal{O}_a =\dim_{\rm B}\mathcal{O}_a =s_0,
\end{equation}
by \eqref{od72} and \eqref{od71}.

Now we provide a proof of Lemma \ref{od23}.
\begin{proof}[Proof of Lemma {\ref{od23}}]
  Due to the discussion in the beginning of Section~\ref{od26} and by Theorem~\ref{od97}, there exists a set $\mathcal{E}\subset(1/2,1)$ such that for every $a\in(1/2,1)\setminus\mathcal{E}$ the IFS $\mathcal{S}_a$ in \eqref{od31} satisfies the SESC and $\dim_{\mathrm{H}}\mathcal{E}=0$. By Theorem~\ref{od07b}, we can calculate the $L^q$ dimension of $(\proj_{\sf y}\circ\Pi^{\mathcal{F}}_a)_\ast\nu$ for any $q\in(1,\infty)$ using the formula
  \begin{equation}\label{od84}
    \forall a\in\left(\frac{1}{2},1\right)\setminus\mathcal{E}:
    D((\proj_{\sf y}\circ\Pi^{\mathcal{F}}_a)_\ast\nu,q)=\min\left\{\frac{\tau(q)}{q-1},1\right\},
  \end{equation}
  where $\tau(q)$ is defined as the unique number satisfying
  \begin{equation}\label{od83}
    2\left(a\left(\frac{1}{3}\right)^{s_0-1}\right)^q\left(a\right)^{-\tau(q)}+
    \left((2a-1)\left(\frac{1}{3}\right)^{s_0-1}\right)^q (2a-1)^{-\tau(q)} = 1.
  \end{equation}
  By rearranging the terms of \eqref{od83}, we arrive to
  \begin{equation}\label{od82}
    \left(\frac{1}{3}\right)^{(s-1)q}\left(2a^{q-\tau(q)}+(2a-1)^{q-\tau(q)}\right)=1.
  \end{equation}

  Since $s_0$ is defined as the unique number satisfying $(4a-1)\left(\frac{1}{3}\right)^{s_0-1}=1$, by substituting $q-1$ to $\tau(q)$ in \eqref{od82}, we get
  \begin{align}\label{od81}
    \left(\frac{1}{3}\right)^{(s_0-1)q}
    &\big(2a^{q-(q-1)}
    +(2a-1)^{q-(q-1)}\big) = \\
    &=(4a-1)\left(\frac{1}{3}\right)^{(s_0-1)q}=\left(\frac{1}{3}\right)^{(s_0-1)(q-1)}<1, \nonumber
  \end{align}
  for every $q>1$. Moreover, observe that the mapping
  \[
    \tau\mapsto\left(\frac{1}{3}\right)^{(s_0-1)q}\left(2a^{q-\tau}+(2a-1)^{q-\tau}\right)
  \]
  is increasing and continuous, and it tends to infinity as $\tau\to\infty$. This observation and \eqref{od81} together imply that $\tau(q)>q-1$ for all $q>1$. Thus according to \eqref{od84},
  \[
    \forall q>1, \forall a\in(\frac{1}{2},1) \setminus\mathcal{E}:\quad
    D((\proj_{\sf y}\circ\Pi^{\mathcal{F}}_a)_\ast\nu, q)=1.
  \]
  Applying Lemma \ref{od01} gives
  \[
    \underline{\dim}_{\rm loc}(\proj_{\sf y}\circ\Pi^{\mathcal{F}}_a)_{\ast}\nu(x)\geq 1\text{ for all }x.
  \]  
\end{proof}

\subsection{Upper bound on the Assouad dimension}

We are left to show that $\dim_{\rm A}\mathcal{O}_a\leq s_0$. To do this, we first need to bound the Hausdorff dimension of the level sets from above. Recall, for $y\in(0,1)$ and $a\in(1/2,1)$, we defined the corresponding level set of Okamoto's function defined with parameter $a$ as
\[
  L_y=\{x\in\mathbb{R}: (x,y)\in\mathcal{O}_a\}.
\]

\begin{lemma}\label{od19}
  Let $\mu_0$ be the natural measure of the Okamoto IFS. Then
  \begin{equation}\label{od64}
    \underline{\dim}_{\rm loc}((\proj_{\sf y})_{\ast}\mu_0,y)\geq 1  \implies
    \dim_{\rm H}L_y\leq s_0-1.
  \end{equation}
\end{lemma}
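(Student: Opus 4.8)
The plan is to transfer the dimension bound from the level set $L_y$ to a slice of the planar attractor and then exploit a symbolic covering argument that compares the horizontal ``width'' of cylinders to the measure of their $y$-projections. Recall that $L_y$ is (up to the trivial identification $x\mapsto(x,y)$) the horizontal slice $\mathcal{O}_a\cap\proj_y^{-1}(y)$, and that every point of this slice is coded by some $\iiv\in\Sigma$ with $\proj_y(\Pi^{\mathcal{F}}_a(\iiv))=y$. Since the $x$-contraction ratios are all $1/3$, the cylinder $f_{\iiv|_n}([0,1]^2)$ is a rectangle of horizontal side $3^{-n}$; hence $L_y$ is covered, for each $n$, by the intervals $\proj_x(f_{\iiv|_n}([0,1]^2))$ ranging over those $\iiv|_n\in\Sigma_n$ whose cylinder rectangle actually meets the line $\{y\}\times\mathbb{R}$. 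Call the number of such length-$n$ words $M_n(y)$. Then $L_y$ is covered by $M_n(y)$ intervals of length $3^{-n}$, so $\dim_{\rm H}L_y\leq\overline{\dim}_{\rm B}L_y\leq\limsup_n \frac{\log M_n(y)}{n\log 3}$.

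The core of the argument is to bound $M_n(y)$ using the hypothesis $\underline{\dim}_{\rm loc}((\proj_y)_\ast\mu,y)\geq 1$. The natural measure $\mu=(\Pi^{\mathcal{F}}_a)_\ast\nu$ assigns to each level-$n$ cylinder the weight $\nu([\iiv|_n])=p_{i_1}\cdots p_{i_n}$, and by the choice of the probabilities in \eqref{od22}, every length-$n$ cylinder $[\iiv|_n]$ has $\nu([\iiv|_n])=(4a-1)^{-n}\cdot(\text{something})$; more precisely $p_i=(\tfrac13)^{s_0-1}\alpha_i$ with $\alpha_1=\alpha_3=a$, $\alpha_2=2a-1$, so $\nu([\iiv|_n])=\big(\tfrac13\big)^{n(s_0-1)}\prod_k\alpha_{i_k}$. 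Now if $[\iiv|_n]$ is one of the $M_n(y)$ words whose rectangle meets the horizontal line through $y$, then $f_{\iiv|_n}([0,1]^2)$ projects under $\proj_y$ to an interval of length $\beta_{i_1}\cdots\beta_{i_n}$ containing $y$, where $\beta_i$ is the vertical contraction, and one checks $\prod_k\alpha_{i_k} = \prod_k\beta_{i_k}$ for the Okamoto system (since $\{\alpha_i\}=\{\beta_i\}$ as multisets: $\beta_1=\beta_3=a$, $\beta_2=|1-2a|=2a-1$). Summing the measure bound: for small $r$,
\[
  (\proj_y)_\ast\mu\big(B(y,r)\big)=\sum_{\iiv|_n:\,y\in \proj_y f_{\iiv|_n}([0,1]^2)} \nu([\iiv|_n]) + (\text{error from shorter/longer cylinders}),
\]
and by the local-dimension hypothesis the left side is $\leq r^{1-\epsilon}$ for every $\epsilon>0$ once $r$ is small. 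Choosing $r\asymp \max_{\text{relevant }\iiv|_n}\prod\beta_{i_k}$ and bounding each $\nu([\iiv|_n])=\prod\beta_{i_k}\cdot(\tfrac13)^{n(s_0-1)}$ from below, one gets $M_n(y)\cdot(\tfrac13)^{n(s_0-1)}\cdot(\min\beta)^{n}\lesssim \mu$-mass $\leq r^{1-\epsilon}$, which, after taking logarithms, dividing by $n\log 3$, and letting $n\to\infty$ and $\epsilon\to0$, yields $\limsup_n\frac{\log M_n(y)}{n\log 3}\leq s_0-1$. A cleaner route that avoids tracking $\min\beta$: partition the relevant cylinders into dyadic blocks by the size $\prod\beta_{i_k}\in[2^{-(j+1)},2^{-j})$; the cylinders in block $j$ have $\proj_y$-images of comparable length $\asymp 2^{-j}$ all meeting $y$, so they are contained in $B(y,C2^{-j})$; their number times their common $\mu$-mass $\asymp 2^{-j}(\tfrac13)^{n_j(s_0-1)}$ (with $n_j$ the word length) is $\leq (\proj_y)_\ast\mu(B(y,C2^{-j}))\leq (C2^{-j})^{1-\epsilon}$, giving the count in block $j$ at most $3^{\,n_j(s_0-1)}2^{-j\epsilon}$, and one recombines over $j$ with $2^{-n_j\chi_1}\lesssim 2^{-j}$, $\chi_1=\chi_1(a)$ as in \eqref{od75}.

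The main obstacle I expect is the bookkeeping in matching the \emph{symbolic} length $n$ to the \emph{metric} scale $r$: because the vertical contraction $2a-1$ can be much smaller than $a$ (indeed $\to 0$ as $a\to 1/2^+$), cylinders of the same symbolic length have wildly different $\proj_y$-image lengths, so one cannot simply take $r=3^{-n}$ or $r=a^n$. The fix is the dyadic-block decomposition sketched above (equivalently, a stopping-time / first-passage argument: for each branch, stop at the first $n$ with $\prod_{k\le n}\beta_{i_k}\le r<\prod_{k<n}\beta_{i_k}$), which reduces everything to counting stopped cylinders of roughly equal $y$-size against the measure bound $(\proj_y)_\ast\mu(B(y,r))\le r^{1-\epsilon}$. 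Once the counting is organized this way, the estimate $\#\{\text{stopped cylinders of }y\text{-size}\asymp r\}\le r^{-\epsilon}\cdot r^{-(s_0-1)}$ follows, and since each contributes an $x$-interval of length $\le r$ (as $3^{-n}\le a^{-n}\prod\beta_{i_k}\le \prod\beta_{i_k}\asymp r$ using $3^{-1}\le a$... more carefully, $3^{-n}\le (\prod\beta_{i_k})^{\log 3/\log(1/\min\beta)}$, which still tends to $0$), we conclude $\dim_{\rm H}L_y\le s_0-1$. The remaining routine point is to handle the $\mu$-measure of the boundary strips where a line can meet several adjacent cylinder rectangles along their common edges; by the ROSC these overlaps involve only finitely many words at each level and contribute a bounded multiplicative constant, which disappears in the logarithmic limit.
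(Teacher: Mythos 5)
Your overall strategy --- cover $L_y$ by the $x$-projections of the cylinder rectangles, organize the cylinders by a stopping time in the $y$-contraction (Moran cover $\mathcal{M}_r$), and play the cylinder mass $\nu([\iiv])=\big(\prod_k\beta_{i_k}\big)\big(\tfrac13\big)^{(s_0-1)|\iiv|}$ against the local-dimension hypothesis for $(\proj_y)_*\mu$ --- is exactly the structure of the paper's proof, and you correctly identify the scale-matching obstacle. However, the way you execute the count has a genuine gap that makes the final bound too weak.

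The problem is the phrase ``their common $\mu$-mass $\asymp 2^{-j}(\tfrac13)^{n_j(s_0-1)}$''. Cylinders in a single dyadic block $\prod\beta_{i_k}\asymp 2^{-j}$ do \emph{not} share a word length: the length $n$ ranges over roughly $\big[\tfrac{j\log 2}{-\log(2a-1)},\,\tfrac{j\log 2}{-\log a}\big]$ depending on how many $2$'s the word contains, and the factor $\big(\tfrac13\big)^{n(s_0-1)}$ therefore varies by a power of $r$. Consequently the ``count in block $j$ at most $3^{n_j(s_0-1)}2^{-j\epsilon}$'' is not well-defined, and if one tries to repair it by using a single extremal value of $n$ the estimate comes out wrong. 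Concretely, estimating the number $N(y,r)$ of $\mathcal{M}_r$-cylinders meeting $y$ from $\sum_{\iiv}\nu([\iiv])\le (\proj_y)_*\mu(B(y,r))\lesssim r^{1-\epsilon}$ gives $N(y,r)\lesssim r^{-\epsilon-(s_0-1)\log 3/(-\log a)}$ (using the longest words, which carry the smallest mass), while the largest $x$-interval in the cover has length $\asymp r^{\log 3/(-\log(2a-1))}$ (coming from the shortest words). Feeding these into either box counting or $N\cdot\delta^{s_0-1+\varepsilon'}$ produces the bound $\dim L_y\lesssim(s_0-1)\cdot\tfrac{\log(2a-1)}{\log a}$, which is strictly larger than $s_0-1$ whenever $a<2/3$. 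The overcount comes from replacing all intervals by the largest one: the cover has intervals of widely varying length, and the short ones (which are the most numerous) contribute much less.

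The paper avoids this by never isolating a ``count'' at all. Instead it bounds the Hausdorff pre-measure $\mathcal{H}^{s_0-1+\varepsilon}_\delta(L_y)\le\sum_{\iiv}\big(3^{-|\iiv|}\big)^{s_0-1+\varepsilon}$ directly and relates this weighted sum to the measure via the algebraic split
\[
\left(\tfrac13\right)^{(s_0-1)|\iiv|}=\left(\tfrac13\right)^{(s_0-1+\varepsilon)|\iiv|}\cdot 3^{\varepsilon|\iiv|},
\]
then uses the \emph{lower} bound $|\iiv|\ge\tfrac{\log r}{\log(2a-1)}$ (valid for all $\iiv\in\mathcal{M}_r$) to replace the correction factor $3^{\varepsilon|\iiv|}$ by the deterministic power $r^{\varepsilon\log 3/\log(2a-1)}$, yielding
\[
(\proj_y)_*\mu\big(B(y,r)\big)\ \ge\ (2a-1)\,r\cdot r^{\varepsilon\log 3/\log(2a-1)}\,\mathcal{H}^{s_0-1+\varepsilon}_{r^{-\log 3/\log(2a-1)}}(L_y).
\]
Combined with the hypothesis $(\proj_y)_*\mu(B(y,r))\le r^{1+\varepsilon\log 3/(2\log(2a-1))}$ this forces $\mathcal{H}^{s_0-1+\varepsilon}(L_y)=0$ for every $\varepsilon>0$. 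The point is that the factor $\big(\tfrac13\big)^{(s_0-1+\varepsilon)|\iiv|}$ --- the very summand appearing in the Hausdorff pre-measure --- automatically gives each cylinder the correct scale-dependent weight; there is no block-by-block counting at a fixed length. You should replace your ``number of stopped cylinders times common mass'' step with this weighted-sum comparison; otherwise the argument only proves the weaker bound $\dim_{\rm H}L_y\le(s_0-1)\tfrac{\log(2a-1)}{\log a}$.
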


\begin{proof}
  Set $\lambda_1=\lambda_3=a$ and $\lambda_2=2a-1$. With the help of this notation we can define a symbolic cover of the attractor $\mathcal{O}_a$. For $r\in(0,1)$ let
  \begin{equation}\label{od67}
    \mathcal{M}_{r}:=\{\iiv=(i_1,\dots,i_n)\in\Sigma:
    \lambda_{i_1}\cdots\lambda_{i_n}\leq r<
    \lambda_{i_1}\cdots\lambda_{i_{n-1}}\}
  \end{equation}
  We will use the shorthand notation $\lambda_{\iiv}=\lambda_{i_1}\cdot\dots\cdot\lambda_{i_n}$ for $\iiv=(i_1\dots i_n)\in\Sigma_*$. The length of the words contained in $\mathcal{M}_{r}$ must be bigger than some constant multiplier of the logarithm of $r$, precisely
  \begin{equation}\label{od66}
    \forall \iiv\in\mathcal{M}_r:
    |\iiv|\geq\frac{\log r}{\log (2a-1)}.
  \end{equation}

  Let $\varepsilon>0$ and $y\in[0,1]$ be arbitrary. We may assume without loss of generality that for this $y$ we have $\underline{\dim}_{\rm loc}((\proj_{\sf y})_{\ast}\mu_0,y)\geq 1$, which implies
  \begin{equation}\label{od62}
    \exists R>0, \forall r<R:
    (\proj_{\sf y})_{\ast}\mu_0(B_r(y))\leq r^{1+\frac{\varepsilon\log 3}{2\log (2a-1)}},
  \end{equation}
  where $B_r(y)$ denotes the closed ball of radius $r$ around $y$.
  On the other hand, using \eqref{od66} we obtain
  \begin{align*}\label{od61}
    (\proj_{\sf y})_{\ast}\mu_0(B_r(y)) &\geq \sum_{\substack{\iiv\in \mathcal{M}_r\\ \iiv:y\in\phi_{\iiv}[0,1]}} \lambda_{\iiv}\left(\frac{1}{3}\right)^{(s_0-1)|\iiv|}\geq
    (2a-1)r \sum_{\substack{\iiv\in \mathcal{M}_r\\ \iiv:y\in\phi_{\iiv}[0,1]}} \left(\frac{1}{3}\right)^{(s_0-1)|\iiv|} \\
    &\geq (2a-1)r \sum_{\substack{\iiv\in \mathcal{M}_r\\ \iiv:y\in\phi_{\iiv}[0,1]}} \left(\frac{1}{3}\right)^{(s_0-1+\varepsilon)|\iiv|} \left(\frac{1}{3}\right)^{-\varepsilon|\iiv|} \nonumber\\
    &\geq (2a-1)r \sum_{\substack{\iiv\in \mathcal{M}_r\\ \iiv:y\in\phi_{\iiv}[0,1]}} \left(\frac{1}{3}\right)^{(s_0-1+\varepsilon)|\iiv|} r^{\frac{\varepsilon\log 3}{\log (2a-1)}}.\nonumber \\
    &\geq (2a-1)r\cdot r^{\frac{\varepsilon\log 3}{\log (2a-1)}}
    \mathcal{H}^{s_0-1+\varepsilon}_{r^{\frac{-\log 3}{\log (2a-1)}}}(L_y)
    \nonumber
  \end{align*}
  In the last step, we used \eqref{od24} and that $\{\iiv\in\mathcal{M}_r:\ y\in\phi_{\iiv}[0,1]\}$ defines a covering of $L_y$ over the ${\sf x}$-axis, and $\left(\frac{1}{3}\right)^{|\iiv|}\leq r^{\frac{-\log 3}{\log (2a-1)}}$. It is immediate that
  \begin{equation}
    \forall y\in(0,1)\ \forall \varepsilon>0: \dim_{\rm H}L_y\leq {s_0-1+\varepsilon .}
  \end{equation}
  The choice of $\varepsilon$ was arbitrary, thus the statement of the theorem follows.
\end{proof}

Lemma~\ref{od23}, Lemma \ref{od19} and Theorem \ref{od18} together yields that for every $a\in\left(\frac12,1\right)\setminus\mathcal{E}$
\begin{equation}\label{od17}
  \dim_{\rm A}\mathcal{O}_a \leq s_0.
\end{equation}
Theorem \ref{od70} follows as a consequence of \eqref{od20} and \eqref{od17}.

\subsection{Upper bound for Level sets}\label{od29}

It follows from Lemma~\ref{od23} and Lemma \ref{od19} that for typical parameter $a$ the Hausdorff dimension of any horizontal slice of $\mathcal{O}_a$ is less than or equal to $s_0-1$. Now we show that the same can be said about their Assouad dimension.

\begin{lemma}\label{od05}
  For every $y\in[0,1]$ we have
  \[
    \dim_{\rm A}L_y \leq\sup_{y\in[0,1]}\dim_{\rm H}L_y.
  \]
\end{lemma}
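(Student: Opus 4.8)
The plan is to realize each level set $L_y$ as a weak tangent of a larger self-affine set — most naturally of $\mathcal{O}_a$ itself, or of a one-dimensional self-similar set built from the horizontal maps of $\mathcal{F}$ — and then to invoke the tangent characterization of Assouad dimension (Proposition~\ref{od08}) together with the self-affine slicing bound (Theorem~\ref{od18}). Concretely, first I would observe that the horizontal fibres of $\mathcal{O}_a$ have a self-similar structure: for a fixed $y$ and any admissible cylinder $\iiv$ with $y\in\proj_y f_{\iiv}([0,1]^2)$, the map $f_{\iiv}$ sends the slice $L_{y'}$ (for the appropriate rescaled height $y'$) affinely onto a subset of $L_y$, and the horizontal part of $f_{\iiv}$ is a pure similarity with ratio $3^{-|\iiv|}$. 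So zooming into $L_y$ near any point reproduces, up to a bounded error, slices $L_{y'}$ at various heights $y'$.

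Next I would make the weak-tangent construction precise. Fix $y\in[0,1]$, a point $x_0\in L_y$, and a sequence of scales $r_k\to 0$; choose cylinders $\iiv^{(k)}$ of the appropriate symbolic length so that $f_{\iiv^{(k)}}([0,1]^2)$ has horizontal width comparable to $r_k$ and contains a neighbourhood of $(x_0,y)$ in $\mathcal{O}_a$. Applying the inverse of the horizontal part of $f_{\iiv^{(k)}}$ (a homothety of $\mathbb{R}^2$, or of $\mathbb{R}$ after projecting out the $y$-coordinate), the rescaled pieces $T_k(\mathcal{O}_a)\cap B(0,1)$ converge along a subsequence, by compactness in the Hausdorff metric, to some weak tangent; one checks that this tangent is contained in a horizontal line and equals (a translate of) some $L_{y^*}$ up to closure, or at worst is covered by finitely many such slices. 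Then
\[
  \dim_{\rm A}L_y \;=\; \sup\{\dim_{\rm H}F : F \text{ weak tangent to }L_y\}
  \;\le\; \sup_{y^*\in[0,1]}\dim_{\rm H}L_{y^*},
\]
using Proposition~\ref{od08} applied to $L_y$ (viewed as a compact subset of $\mathbb{R}$), the fact that weak tangents of $L_y$ are themselves built from slices of $\mathcal{O}_a$, and finite stability of Hausdorff dimension. Alternatively — and this may be cleaner — I would just quote Theorem~\ref{od18} directly: it already gives $\dim_{\rm A}\mathcal{O}_a\le\max\{\dim_{\rm H}\mathcal{O}_a,\,1+\sup_x\dim_{\rm H}\Lambda_x\}$, and the proof of that proposition in \cite{anttila2023slices} passes through showing that each slice $\Lambda_x$ (up to the ambient line) is a weak tangent of $\Lambda$, hence $\dim_{\rm A}\Lambda_x\le\dim_{\rm H}$ of some slice; isolating that intermediate step gives exactly Lemma~\ref{od05}.

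The main obstacle is the bookkeeping in the tangent construction: ensuring that after rescaling by the homothety that undoes $f_{\iiv^{(k)}}$ the limiting set really is a single horizontal slice (or a controlled finite union of slices) rather than something larger, and that the height $y^*$ of the limiting slice genuinely lies in $[0,1]$ so that the supremum on the right-hand side is over the correct index set. This requires care with which cylinders are admissible at height $y$, with the ROSC to control overlaps, and with the compactness/diagonal argument selecting the convergent subsequence of both the rescaled sets and the heights. Once the tangent is identified as a slice $L_{y^*}$, the dimension inequality is immediate from Proposition~\ref{od08} and the monotonicity of $\dim_{\rm H}$ under inclusion.
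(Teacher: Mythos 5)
Your proposal follows essentially the same route as the paper: show that any weak tangent of $L_y$ is contained in a finite union of similar images of slices $L_{y^*}$, then invoke Proposition~\ref{od08} together with monotonicity and finite stability of Hausdorff dimension. The paper makes this precise in the $r_k\to 0$ regime by choosing $n_k$ with $(1/3)^{n_k}\leq 2r_k<(1/3)^{n_k-1}$ so that at most two level-$n_k$ cylinders of $\mathcal{F}$ meet the relevant rescaled piece of $L_y$, then passing to a subsequence where both the pulled-back sets and the conjugating similarities converge, while also dispensing separately with the degenerate regimes where $r_k$ stays bounded or diverges; one caveat on your alternative is that Theorem~\ref{od18} bounds $\dim_{\rm A}$ of the full attractor rather than of an individual slice, so it cannot be quoted directly to obtain Lemma~\ref{od05}.
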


\begin{proof}
  Pick an arbitrary level set $L_y$, and let $E\neq\emptyset$ be a weak tangent to it. There exists a sequence of similarities $(T_k)_{k\geq 1}$ such that
  \begin{equation}\label{od85}
    E_k:=T_k(L_y)\cap B(0,1)\to E \mbox{ as } k\to\infty
  \end{equation}
  with respect to the Hausdorff metric.
  Since $T_k$ is a similarity for every $k\geq 1$, it has the form
  \[
    T_k(x,y)=\frac{x-x_k}{r_k},
  \]
  for suitable constants $x_k$ and $r_k$.
  {
  It follows from \eqref{od85} that there exists a subsequence $(k_l)_{l\geq 1}$ for which both $(x_{k_l})_{l\geq 1}$ and $(r_{k_l})_{l\geq 1}$ converge.
  }
  By Proposition \ref{od08}, it is enough to show that
  \begin{equation}\label{od04}
    \dim_{\rm H}E \leq\sup_{y\in[0,1]}\dim_{\rm H}L_y,
  \end{equation}
  as $E$ is an arbitrary weak tangent. To show this we will embed $E$ into some level sets of $\mathcal{O}_a$.

  First assume that there exists $0<a,b$ constants for which
  \[
    \forall k\geq 1:\quad 0<a\leq r_k\leq b<\infty.
  \]
  {
  Then, $(T_{k_l}^{-1})_{l\geq 1}$ converges to a similarity mapping $T$ in supremum norm.
  }
  Further, $T(E)\subset L_y$. It follows that
  \[
    \dim_{\rm H}E =\dim_{\rm H}T(E) \leq\dim_{\rm H} L_y.
  \]

  {
  If $\lim_{l\to\infty}|r_{k_l}|=\infty$, $E$ must be a singleton, and \eqref{od04} trivially holds. We are left to deal with the case of $\lim_{l\to\infty}|r_{k_l}|=0$. Since every $T_{k_l}$ is expansive, we can always find a suitable $n_l\geq 1$ such that $E_{k_l}$ is the image of the level sets of some cylinders of level $n_l$ under $T_{k_l}$. Namely,
  \begin{equation}\label{od03}
    \exists L>0, \forall l\geq L, \exists n_l \mbox{ such that }
    \left(\frac{1}{3}\right)^{n_l} \leq 2r_{k_l} < \left(\frac{1}{3}\right)^{n_l-1}.
  \end{equation}
  Thanks to the structure of Okamoto's function, there are at most $2$ cylinder sets of level $n_l$ satisfying this condition. We write $\mathbf{i}_l$ and $\mathbf{j}_l$ for the two words of length $n_l$ in the symbolic space that code these cylinders. In particular, for $\mathbf{i}_l, \mathbf{j}_l\in \Sigma_{n_l}$
  \begin{align*}
    f_{\mathbf{i}_l}\left([0,1]^2\right)\cap \big\{T_{k_l}^{-1}(E_{k_l})\times\{y\}\big\}\neq\emptyset, \\
    f_{\mathbf{j}_l}\left([0,1]^2\right)\cap \big\{T_{k_l}^{-1}(E_{k_l})\times\{y\}\big\}\neq\emptyset.
  \end{align*}

  Define $F_l:=L_y\cap B(x_{k_l},r_{k_l})$ and consider the sets $f^{-1}_{\mathbf{i}_l}(F_l\times\{y\})$ and $f^{-1}_{\mathbf{j}_l}(F_l\times\{y\})$. Since \eqref{od03} holds, we can find a subsequence $(l_m)_{m\geq 1}$ that satisfies the following two properties
  \begin{enumerate}
    \item There exist sets $C_{y^{'}}, C_{y^{''}}$ such that
    \[
      f^{-1}_{\mathbf{i}_{l_m}}(F_{l_m}\times\{y\})\to C_{y^{'}} \mbox{, and }
      f^{-1}_{\mathbf{j}_{l_m}}(F_{l_m}\times\{y\})\to C_{y^{''}}.
    \]
    \item There exist similarities $g, h$ such that
    \[
      f^{-1}_{\mathbf{i}_{l_m}}\circ T^{-1}_{k_{l_m}}\to g \mbox{, and }
      f^{-1}_{\mathbf{j}_{l_m}}\circ T^{-1}_{k_{l_m}}\to h.
    \]
  \end{enumerate}
  By compactness, $C_{y^{'}}\subset L_{y^{'}}$ and $C_{y^{''}}\subset L_{y^{''}}$ for suitable level sets. Further, $g(E)\subset C_{y^{'}}$ and $h(E)\subset C_{y^{''}}$.
  }
  It follows that
  \[
    \dim_{\rm H}E =\dim_{\rm H}(g(E)\cup h(E))
    \leq \dim_{\rm H}(L_{y^{'}}\cup L_{y^{''}})\leq\sup_{y\in[0,1]}\dim_{\rm H}L_y.
  \]

\end{proof}

We obtain Theorem~\ref{od68} by combining Lemma~\ref{od19} and Lemma~\ref{od05}.

\section{Dimension of Lebesgue typical slices}\label{od28}

This section is dedicated to the proof of our third main theorem, Theorem~\ref{od60}.
Remember that $\mathcal{S}_a=\{S_1,S_2,S_3\}$ is the self-similar IFS that describes the projection of the graph of Okamoto's function to the ${\sf y}$-axis.
\begin{equation*}\label{od59}
  S_1(x)=ax, S_2(x)=(1-2a)x+a, S_3(x)=ax+1-a
\end{equation*}
By Theorem \ref{od97}, $\mathcal{S}_a$ is strongly exponentially separated for all parameters $a\in(1/2,1)\setminus\mathcal{E}$, where $\mathcal{E}$ is a small set of exceptional parameters with $\dim_{\rm H}\mathcal{E}=0$. Set $p:=(2a-1)\left(\frac{1}{3}\right)^{s_0-1}$. With the help of the probability $p$, we define a homogeneous subsystem of higher iterates of $\mathcal{S}_a$.
For an $m\in\mathbb{N}$, we define
\begin{equation}\label{od58}
  \mathcal{M}_m:=\{\iiv\in\Sigma_m: \#_2\iiv=\floor{mp}\},\: \mathcal{S}_m=\mathcal{S}_{m,a}:=\{S_{\iiv}\}_{\iiv\in\mathcal{M}_m},
\end{equation}
where $\#_2\iiv$ denotes the number of $2$ digits in $\iiv$.
There are {$|\mathcal{M}_m|=2^{m-\floor{pm}}\binom{m}{\floor{pm}}$} many functions in $\mathcal{S}_m$, and they all share the same contraction ratio {$\lambda=\lambda(a):=a^{m-\floor{pm}}(1-2a)^{\floor{pm}}$}. We write $\mathcal{K}_m$ for the attractor of $\mathcal{S}_m$.

As $\mathcal{S}_m$ is a subsystem of $\mathcal{S}^m=\{S_{\iiv}\}_{\iiv\in\Sigma_m}$, it also satisfies the strong exponential separation condition for all parameters outside a set of zero Hausdorff dimension.

We are going to approximate $\mathcal{O}_a$ with the help of subsystems defined by the alphabets $\mathcal{M}_m, m\in\mathbb{N}$. Let $\Pi_{m,a}:\Sigma\to[0,1]$ be the natural projection with respect to $\mathcal{S}_m$, and let $\nu_{m,a}$ be the uniform measure on $\mathcal{M}_m^{\mathbb{N}}$.
The push-forward measure $\mu_{m,a}:=\left(\Pi_{m,a}\right)_{\ast}\nu_{m,a}$ is supported on $\mathcal{K}_m$.

The next proposition claims that $\mu_{m,a}$ is absolutely continuous with respect to the one-dimensional Lebesgue measure, hence $\dim_{\rm H}\mu_{m,a}=1$ for most parameters $a\in(1/2,1)$.
\begin{proposition}\label{od57}
  {For $m\in\mathbb{N}$ sufficiently large}, there exists a set $\mathcal{E}\subset\left(\frac{1}{2},1\right)$ with $\dim_{\rm H}\mathcal{E}=0$ for which
  \[
    \forall a\in \left(\frac{1}{2},1\right)\setminus\mathcal{E}:
    \mu_{m,a}\ll \mathcal{L}^1,
  \]
  where $\mathcal{L}^1$ denotes the one-dimensional Lebesgue measure.
\end{proposition}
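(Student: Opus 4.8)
The plan is to apply the Shmerkin--Solomyak machinery (Theorem~\ref{od10} and Theorem~\ref{od09}) in exactly the same way one proves absolute continuity of self-similar measures with Fourier-decaying components. The key structural observation is that the homogeneous subsystem $\mathcal{S}_m$ has all contraction ratios equal to the single value $\lambda=\lambda(a)$, so $\mathcal{M}_m^{\mathbb{N}}$ carries a natural product/convolution structure: writing a word $\iiv\in\mathcal{M}_m^{\mathbb{N}}$ digit-by-digit, the measure $\mu_{m,a}$ is the distribution of the random sum $\sum_{k\geq 0}\lambda^{k}\,t_{\iiv_{k}}$, where $t_{\iiv}$ is the translation part of $S_{\iiv}$ and the $\iiv_k$ are i.i.d.\ uniform on the finite alphabet $\mathcal{M}_m$. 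Splitting the index set into residue classes modulo some large $M$, one factors $\mu_{m,a}=\mu_1\ast\mu_2$ where $\mu_1$ is itself a self-similar measure for the IFS with ratios $\lambda^{M}$ (a further homogeneous subsystem), and $\mu_2$ absorbs the remaining coordinates.

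First I would fix $m$ and pass to a homogeneous IFS of the form $\{\lambda^{M}x+\tau_j\}_j$ for suitably chosen $M$; since this is again (a subsystem built from iterates of) $\mathcal{S}_m$, it inherits the strong exponential separation from Theorem~\ref{od97} for all $a$ outside a zero-dimensional set, and hence by Hochman's Theorem~\ref{od07} its attractor-measure has Hausdorff dimension $\min\{1,s_1\}$ where $s_1$ is the similarity dimension. The point of choosing $M$ large is to guarantee $s_1\geq 1$, so that this component measure, call it $\mu_1$, has $\dim_{\rm H}\mu_1=1$; this requires checking that the similarity dimension of $\mathcal{S}_m$ exceeds $1$, which follows because $|\mathcal{M}_m|\lambda^{s_0}\approx 1$ by the choice of $p=(2a-1)(1/3)^{s_0-1}$ and $s_0>1$ for $a\in(1/2,1)$, so $|\mathcal{M}_m|\lambda = |\mathcal{M}_m|\lambda^{s_0}\cdot\lambda^{1-s_0}>1$. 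Then I would invoke Solomyak's Theorem~\ref{od10}: the contraction ratio $\lambda^{M}$, viewed as the parameter vector $\mathbf{r}=(\lambda^M,\dots,\lambda^M)$, lies outside Solomyak's exceptional zero-dimensional set for all $a$ outside a further zero-dimensional exceptional set of parameters (one must check the map $a\mapsto\lambda(a)^M$ does not collapse dimension — it is real-analytic and non-constant, so preimages of zero-dimensional sets are zero-dimensional), provided the fixed points $\tau_j(1-\lambda^M)^{-1}$ are not all equal, which holds because the translations are genuinely distinct. This gives a power decay $|\widehat{\mu_1}(t)|\leq C|t|^{-\alpha}$.

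With $\dim_{\rm H}\mu_1=1$ and polynomial Fourier decay of $\mu_1$ in hand, Theorem~\ref{od09} yields $\mu_1\ast\mu_2\ll\mathcal{L}^1$, i.e.\ $\mu_{m,a}\ll\mathcal{L}^1$, for all $a$ outside the union of the (finitely many, all zero-dimensional) exceptional sets encountered above; since a finite union of zero-dimensional sets is zero-dimensional, this is the desired $\mathcal{E}$. The main obstacle I anticipate is not any single step but the bookkeeping of exceptional sets: one must verify that the reparametrisation $a\mapsto\lambda(a)^M$ (and $a\mapsto 2a-1=b$ for the separation condition) sends the ambient parameter interval into the good regions of both Hochman's and Solomyak's theorems up to a zero-dimensional error, which hinges on these maps being real-analytic and non-constant so that Hausdorff dimension zero is preserved under preimage. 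A secondary technical point is confirming that the ``leftover'' factor $\mu_2$ is genuinely a Borel probability measure one can convolve against (it is, being a pushforward of a product measure), and that the splitting $\mu_{m,a}=\mu_1\ast\mu_2$ is valid — this is the routine observation that grouping i.i.d.\ digits by residue class commutes with the geometric sum.
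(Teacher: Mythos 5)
Your approach is essentially the paper's own: split $\mu_{m,a}$ into a convolution of two self-similar measures, establish strong exponential separation (hence full dimension) for one of the pieces via Theorem~\ref{od97} and Hochman, get polynomial Fourier decay via Solomyak's Theorem~\ref{od10}, and close with the Shmerkin--Solomyak Lemma (Theorem~\ref{od09}). The overall strategy and the role of each ingredient match. Two points where your write-up differs from, or is looser than, the paper's proof are worth flagging.

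First, you place \emph{both} hypotheses of Theorem~\ref{od09} --- Hausdorff dimension one \emph{and} power Fourier decay --- on the same factor $\mu_1$, with $\mu_2$ playing the role of the ``arbitrary'' $\nu$. The paper instead distributes them: the ``big'' factor $\varrho^{(k)}_a$ (the residue classes $\not\equiv 0 \bmod k$) is shown to have dimension one via SESC and Hochman, while the ``small'' factor $\eta^{(k)}_a$ (a single residue class) is the one given Fourier decay via Solomyak. Both readings of the lemma ultimately deliver absolute continuity, but the paper's split is the natural one given what the two theorems respectively require, and it avoids having to check that a single self-similar measure simultaneously satisfies two independent exceptional-set conditions whose parameter dependences ($a\mapsto(2a-1)$ for SESC, $a\mapsto\lambda(a)^M$ for Fourier decay) live in different spaces.

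Second, your claim that the IFS $\{\lambda^M x + \tau_{\pmb{\jjv}}\}$ underlying $\mu_1$ ``is again a subsystem built from iterates of $\mathcal{S}_m$, so it inherits SESC'' skips a genuine step. That IFS has contraction ratio $\lambda^M$, whereas the $(M-1)$-fold compositions $S_{\jjv_1}\circ\cdots\circ S_{\jjv_{M-1}}$ contract by $\lambda^{M-1}$; the translation structure at deeper levels is therefore different, and SESC does not transfer for free. The paper handles this by conjugating $\mathcal{S}_{(<k),m}$ with an explicit translation $\gamma$ so that the conjugated system becomes an honest subsystem of $\mathcal{S}_m^k$, and then invoking the invariance of SESC under conjugation by similarities. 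You would need the same (or an equivalent) conjugation argument; it is routine but not omittable. With those two clarifications, your proposal is a sound reconstruction of the paper's proof.
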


\begin{proof}
  Fix an arbitrary but large $k>1$. We define the following two self-similar iterated function systems
  \begin{align*}
    \mathcal{S}_{(<k),m} &= \left\{g_{\pmb{\jjv}}(x):=\lambda^kx+
    \sum_{l=1}^{k-1}\lambda^{l-1}S_{\jjv_l}(0)\right\}_{\pmb{\jjv}=(\jjv_1,\dots,\jjv_{k-1})\in\mathcal{M}_m^{k-1}} \\
    \mathcal{S}_{(=k),m} &= \bigg\{h_{\jjv}(x)=\lambda^kx+\lambda^{k-1}S_{\jjv}(0)\bigg\}_{\jjv\in\mathcal{M}_{m}}.
  \end{align*}
  Notice that the natural pressure $\Pi_{m,a}$ takes the following form 
  \begin{equation}\label{eq:plus}
  \begin{split}
    \Pi_{m,a}(\pmb{\jjv}) &= \sum_{i=1}^{\infty} \lambda^{i-1}S_{\jjv_i}(0) =
    \sum_{i=0}^{\infty}\lambda^{ik} \left( \sum_{l=1}^{k-1} \lambda^{l-1}S_{\jjv_{ik+l}}(0) + \lambda^{k-1}S_{\jjv_{(i+1)k}}(0)\right)  \\
    &= \sum_{i=0}^{\infty}\lambda^{ik} \left( \sum_{l=1}^{k-1} \lambda^{l-1}S_{\jjv_{ik+l}}(0)\right) 
    +\sum_{i=0}^{\infty}\lambda^{ik}\left(\lambda^{k-1}S_{\jjv_{(i+1)k}}(0)\right) \\
    &= \Pi^{(<k)}_m(\pmb{\jjv}) + \Pi^{(=k)}_m(\pmb{\jjv})
  \end{split}
\end{equation}
  for every $\pmb{\jjv}\in \mathcal{M}_m^{\mathbb{N}}$, where $\Pi^{(<k)}_m$ and $\Pi^{(=k)}_m$ denote the natural projections of the IFSs $\mathcal{S}_{(<k),m}$ and $\mathcal{S}_{(=k),m}$.

  Let $\varrho^{(k)}_a, \eta^{(k)}_a$ be the self-similar measures of $\mathcal{S}_{(<k),m},\mathcal{S}_{(=k),m}$ respectively, both defined with uniform probabilities.
  With the help of these measures and \eqref{eq:plus}, we can write $\mu_{m,a}$ as a convolution
  \begin{equation}\label{od56}
    \mu_{m,a}=\varrho^{(k)}_a * \eta^{(k)}_a.
  \end{equation}

  By Theorem \ref{od10}, there exists an exceptional set of parameters $\mathcal{E}^{\prime}\subset(1/2,1)$ with $\dim_{\rm H}\mathcal{E}^{\prime}=0$ such that for every $a\in(1/2,1)\setminus\mathcal{E}^{\prime}$ the measure $\eta^{(k)}_a$ has polynomial Fourier decay. That is, for suitable $\alpha=\alpha(a,k)>0, C>0$ constants and sufficiently big $t$ values
  \begin{equation}\label{od55}
    |\widehat{\eta}^{(k)}_a(t)|\leq C|t|^{-\alpha},
  \end{equation}
  where $\widehat{\eta}^{(k)}_a(t)$ is the Fourier transform of $\eta^{(k)}_a(t)$.

  Now we show that $\mathcal{S}_{(<k),m}$ satisfies the strong exponential separation condition for all parameters outside a set of zero Hausdorff dimension. First, we define a function
  {
  \begin{equation}\label{od54}
    \gamma(x)=x+S_{\tilde{i}_1\dots \tilde{i}_m}(0)\frac{\lambda^{k-1}}{1-\lambda^k},
  \end{equation}
  }
  where {$\tilde{i}_n=1$ if $n\leq m-\floor{mp}$, and $\tilde{i}_n=2$ if $n> m-\floor{mp}$.}
  Let $\widetilde{\mathcal{S}}_{(<k),m}$ be the IFS obtained by conjugating the functions of $\mathcal{S}_{(<k),m}$ by $\gamma$.
  \begin{equation}\label{od53}
    \widetilde{\mathcal{S}}_{(<k),m}=\bigg\{\gamma\circ g_{\pmb{\jjv}}\circ\gamma^{-1} \:\bigg\vert\: g_{\pmb{\jjv}}\in\mathcal{S}_{(<k),m} \bigg\}
  \end{equation}
  It is easy to see that for any $g_{\pmb{\jjv}}\in\mathcal{S}_{(<k),m}$
  {
  \begin{align*}
    \gamma\circ g_{\pmb{\jjv}}\circ\gamma^{-1}(x) &= \lambda^kx +\sum_{l=1}^{k-1}\lambda^{l-1}S_{\jjv_l}(0) + S_{\tilde{i}_1\dots\tilde{i}_m}(0)\lambda^{k-1} \\
    &= S_{\jjv_1}\circ\cdots\circ S_{\jjv_{k-1}}\circ S_{\tilde{i}_1\dots\tilde{i}_m}(x)
  \end{align*}
  }
  Therefore, $\widetilde{\mathcal{S}}_{(<k),m}$ is a subsystem of $\mathcal{S}_m^k=\{S_{\iiv_1}\circ\cdots\circ S_{\iiv_k} \:\big\vert\: l\in\{1,\dots,k\}: \iiv_l\in\mathcal{M}_m \}$, and as such, inherits the strong exponential separation when it holds. Thus, according to Theorem \ref{od97} and the discussion at the beginning of Section~\ref{od26}, there exists a set $\mathcal{E}^{\prime\prime}\subset(1/2,1)$ with $\dim_{\rm H}\mathcal{E}^{\prime\prime}=0$ for which $\mathcal{S}_a$ has strong exponential separation for all $a\in\left(\frac{1}{2},1\right)$. So in particular, any subsystem is strongly exponentially separated, and so 
  \[
    \forall a\in\left(\frac{1}{2},1\right)\setminus\mathcal{E}^{\prime\prime}:
    \widetilde{\mathcal{S}}_{(<k),m} \:\mbox{ satisfies the SESC.}
  \]
  Since $\mathcal{S}_{(<k),m}$ is a conjugate of $\widetilde{\mathcal{S}}_{(<k),m}$, it also satisfies the strong exponential separation condition for $a\in(1/2,1)\setminus\mathcal{E}^{\prime\prime}$. It follows from Theorem \ref{od07} that
  \begin{equation}\label{od52}
    \dim_{\rm H}\varrho_a^{(k)} =\min\left\{1,\frac{\log |\mathcal{M}_m|^{k-1}}{-\log |\lambda|^k}\right\}.
  \end{equation}

  Using \eqref{od55} and \eqref{od52}, we may conclude the proof by applying Theorem \ref{od09} if we show that the right-hand side is greater than $1$ when $m$ and $k$ are sufficiently large. By the definition of $\mathcal{S}_{(<k),m}, \lambda$ and $p$, indeed
  {
  \[\begin{split}
     \lim_{\substack{m\to\infty\\k\to\infty}}\frac{\log |\mathcal{M}_m|^{k-1}}{-\log |\lambda|^k} &= \lim_{\substack{m\to\infty\\k\to\infty}}\frac{(k-1)\log\left(2^{m-\floor{pm}}\binom{m}{\floor{pm}}\right)}{-k\log \left(a^{m-\floor{pm}}(2a-1)^{\floor{pm}}\right)}\\
     &=
    \frac{-p\log p -(1-p)\log\frac{1-p}{2}}{-p\log (2a-1)-(1-p)\log a}>1.
  \end{split}\]
  }


\end{proof}

\begin{lemma}\label{od16}
  Let $\mathcal{E}\subset\left(\frac12,1\right)$ be the set as in Proposition~\ref{od57}, { and let $a\in\left(\frac12,1\right)\setminus \mathcal{E}$}. Then for every $\varepsilon>0$ there exists $M>0$ such that for every $m>M$
  \begin{equation}
    \dim_{\rm H}L_y \geq s_0-1-\varepsilon\text{ for $\mathcal{L}^1$-almost every $y\in\mathcal{K}_m$}.
    \end{equation}
\end{lemma}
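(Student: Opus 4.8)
The plan is to exploit the self-affine structure of $\mathcal{O}_a$ along the homogeneous subsystem $\mathcal{S}_m$, together with Proposition~\ref{od57} and the Feng--Hu formula (Theorem~\ref{od15}), to produce slices of large Hausdorff dimension for $\mathcal{L}^1$-almost every $y$ in the attractor $\mathcal{K}_m$ of $\mathcal{S}_m$. First I would promote $\mathcal{S}_m$ to a genuine \emph{planar} self-affine IFS $\mathcal{G}_m$ on $[0,1]^2$ whose $y$-projection is exactly $\mathcal{S}_m$: each word $\iiv\in\mathcal{M}_m$ corresponds to a map $g_{\iiv}(x,y)=(3^{-m}x+c_{\iiv},\ \lambda y+d_{\iiv})$, where the horizontal part comes from composing the $x$-coordinates of $\mathcal{F}_a$ along $\iiv$ (all of which contract by $3^{-m}$), and $\lambda=a^{\lfloor(1-p)m\rfloor}(1-2a)^{\lfloor pm\rfloor}$ is the common vertical ratio. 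Since $\mathcal{G}_m$ is a subsystem of $\mathcal{F}_a^m$, it satisfies the ROSC, and crucially its vertical contraction is \emph{weaker} than its horizontal one precisely when $|\lambda|>3^{-m}$, which holds for $m$ large because $\tfrac{1}{m}\log\lambda\to p\log(2a-1)+(1-p)\log a > -\log 3$ by the choice $p=(2a-1)(1/3)^{s_0-1}$ (this is the same inequality already verified at the end of the proof of Proposition~\ref{od57}). Write $\Lambda^{(m)}$ for the attractor of $\mathcal{G}_m$; note $\Lambda^{(m)}\subset\mathcal{O}_a$, and its $y$-slices are subsets of the corresponding level sets $L_y$.

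Next I would apply the Feng--Hu Theorem to the natural measure $\widetilde\mu_{m,a}$ on $\Lambda^{(m)}$, namely the push-forward of the uniform Bernoulli measure $\nu_{m,a}$ on $\mathcal{M}_m^{\mathbb N}$ under the natural projection of $\mathcal{G}_m$. Its $y$-projection is exactly $\mu_{m,a}$ from Proposition~\ref{od57}, so for $a\notin\mathcal{E}$ we have $\mu_{m,a}\ll\mathcal{L}^1$, hence $\dim_{\rm H}(\proj_y)_*\widetilde\mu_{m,a}=1$ and, moreover, $(\proj_y)_*\widetilde\mu_{m,a}$ is comparable to $\mathcal{L}^1|_{\mathcal{K}_m}$ in the sense that $\mathcal{L}^1$-a.e. $y\in\mathcal{K}_m$ is in its support with positive density. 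The entropy of $\widetilde\mu_{m,a}$ is $h=\log|\mathcal{M}_m|$, the Lyapunov exponents are $\chi_1=-\log|\lambda|$ (vertical, weak) and $\chi_2=m\log 3$ (horizontal, strong), and the SESC for $\mathcal{S}_m$ guarantees (via Theorem~\ref{od07}) that $\dim_{\rm H}(\proj_y)_*\widetilde\mu_{m,a}$ equals its a priori value, which is $1$ here. Then part (2) of Theorem~\ref{od15} gives
\[
  \dim_{\rm H}\widetilde\mu_{m,a}=\frac{\log|\mathcal{M}_m|}{m\log 3}+\Bigl(1+\frac{\log|\lambda|}{m\log 3}\Bigr)\cdot 1 = 1+\frac{\log|\mathcal{M}_m|+\log|\lambda|}{m\log 3},
\]
and part (1) then yields, for $(\proj_y)_*\widetilde\mu_{m,a}$-a.e. $y$ (equivalently $\mathcal{L}^1$-a.e. $y\in\mathcal{K}_m$), that the conditional measure on the slice $\Lambda^{(m)}_y\subset L_y$ has Hausdorff dimension $\dim_{\rm H}\widetilde\mu_{m,a}-1 = \frac{\log|\mathcal{M}_m|+\log|\lambda|}{m\log 3}$. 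Hence $\dim_{\rm H}L_y\ge \frac{\log|\mathcal{M}_m|+\log|\lambda|}{m\log 3}$ for $\mathcal{L}^1$-a.e. $y\in\mathcal{K}_m$.

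Finally I would check that this lower bound converges to $s_0-1$ as $m\to\infty$. Using $|\mathcal{M}_m|=2^{\lfloor(1-p)m\rfloor}\binom{m}{\lfloor pm\rfloor}$ and Stirling, $\tfrac1m\log|\mathcal{M}_m|\to (1-p)\log 2 + H(p)$ where $H(p)=-p\log p-(1-p)\log(1-p)$, i.e. $\tfrac1m\log|\mathcal{M}_m|\to -p\log p-(1-p)\log\tfrac{1-p}{2}$; and $\tfrac1m\log|\lambda|\to p\log(2a-1)+(1-p)\log a$. A direct computation, substituting $p=(2a-1)(1/3)^{s_0-1}$, $1-p=2a(1/3)^{s_0-1}$ and $(4a-1)(1/3)^{s_0-1}=1$, shows the limit of the numerator over $m\log 3$ equals exactly $s_0-1$; this is the same arithmetic that makes $\dim_{\rm H}\mu=s_0$ in \eqref{od71}, now reorganized. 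So given $\varepsilon>0$ there is $M$ with $\frac{\log|\mathcal{M}_m|+\log|\lambda|}{m\log 3} \ge s_0-1-\varepsilon$ for all $m>M$, which is the claim.

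The main obstacle I anticipate is \emph{bookkeeping the measure identifications carefully}: one must confirm (a) that the planar lift $\mathcal{G}_m$ genuinely has the weak direction vertical for large $m$ (handled by the same limit as in Proposition~\ref{od57}), (b) that $(\proj_y)_*\widetilde\mu_{m,a}$ is the measure $\mu_{m,a}$ of Proposition~\ref{od57} so that absolute continuity transfers, and (c) that $\mathcal{L}^1$-a.e. $y\in\mathcal{K}_m$ is a $(\proj_y)_*\widetilde\mu_{m,a}$-typical point in the sense needed to invoke part (1) of Feng--Hu — this is where $\mu_{m,a}\ll\mathcal{L}^1$ is used essentially, since it lets us replace the measure-theoretic "almost every $y$" by "Lebesgue almost every $y\in\mathcal{K}_m$". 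The dimension arithmetic itself is routine once the setup is in place.
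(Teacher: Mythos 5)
Your proposal is correct and follows essentially the same route as the paper: lift the homogeneous subsystem $\mathcal{M}_m$ to the planar IFS $\mathcal{F}_m=\{f_{\iiv}\}_{\iiv\in\mathcal{M}_m}$, push forward $\nu_{m,a}$, use Proposition~\ref{od57} to get $(\proj_y)_*\widetilde\mu_{m,a}=\mu_{m,a}\ll\mathcal{L}^1$, apply both parts of Theorem~\ref{od15}, compute $\dim_{\rm H}\widetilde\mu_{m,a}\to s_0$ as $m\to\infty$, and transfer from $\mu_{m,a}$-a.e.\ to $\mathcal{L}^1$-a.e.\ $y\in\mathcal{K}_m$ via equivalence of $\mu_{m,a}$ with $\mathcal{L}^1|_{\mathcal{K}_m}$. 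One small inaccuracy: the inequality $(1-p)\log a+p\log(2a-1)>-\log 3$ (i.e.\ $\chi_1<\log 3$, so that the weak direction is indeed vertical) is not literally the one displayed at the end of Proposition~\ref{od57}; rather it follows from that display $h/\chi_1>1$ together with the elementary bound $h\leq\log 3$ for a three-symbol entropy — a point the paper itself leaves implicit, so your extra care here is actually a plus.
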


\begin{proof}
Consider the subsystem of the Okamoto IFS $\mathcal{F}_a$ defined by $\mathcal{M}_m$
\[
  \mathcal{F}_m=\{f_{\iiv}\}_{\iiv\in\mathcal{M}_m}.
\]
Recall that $\nu_{m,a}$ denotes the uniform measure on $\mathcal{M}_m$.
Let $\Pi^{\mathcal{F}}_m:\Sigma\to[0,1]^2$ be the natural projection with respect to $\mathcal{F}_m$, and let $\widetilde{\mu}=\widetilde{\mu}_{m,a}$ be the push-forward of $\nu_{m,a}$ with respect to $\Pi^{\mathcal{F}}_m$.

Observe that $(\proj_{\sf y})_\ast\widetilde{\mu}=\mu_{m,a}$.
It follows from Proposition \ref{od57} that $\mu_{m,a}\ll\mathcal{L}^1$, and in particular, $\dim_{\rm H}\mu_{m,a}=1$.
By applying Theorem \ref{od15} to this measure, we obtain
\begin{equation}\label{od50}
  \dim_{\rm H}\widetilde{\mu}_y^{\proj_{\sf y}^{-1}} = \dim_{\rm H}\widetilde{\mu} -1, \mbox{ for $(\proj_{\sf y})_\ast\widetilde{\mu}$-almost every $y$},
\end{equation}
\begin{equation}\label{od49}
  \dim_{\rm H}\widetilde{\mu}= 1+\frac{h_{\widetilde{\mu}}-\chi_1(\widetilde{\mu})}{\log 3}.
\end{equation}
Further, by the construction of $\mathcal{M}_m$, it also follows that
\begin{equation}
  \lim_{m\to\infty}\dim_{\rm H}\widetilde{\mu} = s_0.
\end{equation}

Formulas \eqref{od50} and \eqref{od49} together imply
\begin{equation}\label{od48}
    \forall \varepsilon>0, \exists M>0, \forall m>M: \dim_{\rm H}L_y \geq s_0-1-\varepsilon,
\end{equation}
for $\mu_{m,a}$-almost every $y\in\mathcal{K}_m$.

According to \cite[Proposition~3.1.4]{barany2023self}, the measure $\mu_{m,a}$ is equivalent to the restriction of $\mathcal{L}^1$ to $\mathcal{K}_m$. Thus, \eqref{od48} also holds for $\mathcal{L}^1$-almost every $y\in\mathcal{K}_m$.

\end{proof}

\begin{proof}[Proof of Theorem {\ref{od60}}]
  We argue by contradiction, and assume that
  \begin{equation}\label{od47}
    \mathcal{L}^1(y\in[0,1]: \dim_{\rm H} L_y<s_0-1)>0.
  \end{equation}
  For every $\varepsilon>0$ we define the set
  \begin{equation}\label{od46}
    \mathrm{Bad}_{\varepsilon}:= \{y\in[0,1]: \dim_{\rm H} L_y<s_0-1-\varepsilon\}.
  \end{equation}
  Then, there must exist a small $\varepsilon>0$ for which
  \begin{equation}\label{od45}
    \mathcal{L}^1(\mathrm{Bad}_{\varepsilon})>0.
  \end{equation}
  By the Lebesgue density theorem, for every $\varepsilon^{\prime}>0$ and $\mathcal{L}^1$-almost every $y\in\mathrm{Bad}_{\varepsilon}$
  \begin{equation}\label{od44}
    \exists R>0, \forall r<R:
    \mathcal{L}^1(B(y,r)\cap\mathrm{Bad}_{\varepsilon})>
    (1-\varepsilon^{\prime})\mathcal{L}^1(B(y,r)),
  \end{equation}
  where $B(y,r)$ denotes the closed ball of radius $r$ around $y$.

  {Fix an arbitrary $y_0\in\mathrm{Bad}_{\varepsilon}$.
  For any $r>0$, we can find an $\iiv\in\Sigma_\ast$ such that
  \begin{equation}\label{od43}
    S_{\iiv}[0,1]\subset B(y_0,r), \mbox{ and } \left|S_{\iiv}[0,1]\right|\geq (2a-1)r.
  \end{equation}
  Pick $\varepsilon^{\prime}<\frac{\mathcal{L}^1(\mathcal{K}_m)}{2}(2a-1)$.
  Choose an $m>1$ for which \eqref{od48} holds with $\frac{\varepsilon}{2}$, and choose an $r_0>0$ for which \eqref{od44} holds with $\varepsilon^{\prime}$.
  Further, let $\jjv\in\Sigma_\ast$ be the finite word that satisfies \eqref{od43} for $r_0$.

  Clearly, Lemma \ref{od16} implies that
  \[
    \forall \iiv\in\Sigma_\ast: \dim_{\rm H}L_{S_{\iiv}(y)} \geq s_0-1-\varepsilon,
  \]
  for $\mathcal{L}^1$-almost every $y\in\mathcal{K}_m$. Thus by definition,
  {
  \begin{equation}\label{od42}
    B(y_0,r_0)\cap \mathrm{Bad}_{\varepsilon}\cap S_{\jjv}(\mathcal{K}_m) =\emptyset.
  \end{equation}
  }
  However,
    \begin{align*}
      \mathcal{L}^1(S_{\jjv}(\mathcal{K}_m))\geq (2a-1)r_0\mathcal{L}^1(\mathcal{K}_m)
      =\left(\frac{(2a-1)\mathcal{L}^1(\mathcal{K}_m)}{2}\right)\mathcal{L}^1(B(y_0,r_0)), \\
      \mathcal{L}^1(B(y_0,r_0)\cap\mathrm{Bad}_{\varepsilon})>\left(1-\frac{(2a-1)\mathcal{L}^1(\mathcal{K}_{m})}{2}\right)\mathcal{L}^1(B(y_0,r_0)).
    \end{align*}
  As $S_{\jjv}(\mathcal{K}_m)\subset B(y_0,r_0)$ and
  \[
    \mathcal{L}^1(S_{\jjv}(\mathcal{K}_m)) + \mathcal{L}^1(B(y_0,r_0)\cap\mathrm{Bad}_{\varepsilon}) > \mathcal{L}^1(B(y_0,r_0)),
  \]
  they must intersect each other.
  }
  It contradicts \eqref{od42}, hence
  \[
    \mathcal{L}^1(y\in[0,1]: \dim_{\rm H} L_y<s_0-1)=0.  \]
\end{proof}

\bibliographystyle{abbrv}
\bibliography{Okamoto_bib}

\end{document}